\newcommand\R{{\mathbf{R}}}
\newcommand\C{{\mathbf{C}}}
\newcommand\E{{\mathbf{E}}}
\newcommand\Z{{\mathbf{Z}}}
\newcommand\F{{\mathbf{F}}}
\newcommand\eps{\varepsilon}
\newcommand\dist{\operatorname{dist}}
\theoremstyle{plain}
  \newtheorem{theorem}[subsection]{Theorem}
  \newtheorem{proposition}[subsection]{Proposition}
  \newtheorem{lemma}[subsection]{Lemma}
  \newtheorem{corollary}[subsection]{Corollary}
\theoremstyle{remark}
  \newtheorem{remark}[subsection]{Remark}
  \newtheorem{example}[subsection]{Example}
  \newtheorem{examples}[subsection]{Examples}
\theoremstyle{definition}
  \newtheorem{definition}[subsection]{Definition}
\begin{document}

\title{Freiman's theorem for solvable groups}

\author{Terence Tao}
\address{Department of Mathematics, UCLA, Los Angeles CA 90095-1555}
\email{tao@math.ucla.edu}

\begin{abstract}  Freiman's theorem asserts, roughly speaking, if that a finite set in a torsion-free abelian group has small doubling, then it can be efficiently contained in (or controlled by) a generalised arithmetic progression.  This was generalised by Green and Ruzsa to arbitrary abelian groups, where the controlling object is now a coset progression.  We extend these results further to solvable groups of bounded derived length, in which the coset progressions are replaced by the more complicated notion of a ``coset nilprogression''.  As one consequence of this result, any subset of such a solvable group of small doubling is is controlled by a set whose iterated products grow polynomially, and which are contained inside a virtually nilpotent group.  As another application we establish a strengthening of the Milnor-Wolf theorem that all solvable groups of polynomial growth are virtually nilpotent, in which only one large ball needs to be of polynomial size.  This result complements recent work of Breulliard-Green, Fisher-Katz-Peng, and Sanders. 
\end{abstract}

\maketitle

\section{Introduction}

Define an \emph{additive set} to be a pair $(A,G)$, where $G = (G,+)$ is an abelian group and $A$ is a finite non-empty subset of $G$; we shall often abuse notation and refer to $(A,G)$ just as $A$.  Given two additive sets $(A,G), (B,G)$, we define the sumset $A+B := \{a+b: a \in A, b \in B\}$ and difference set $A-B := \{a-b: a \in A, b \in B\}$, as well as the reflection $-A := \{-a: a \in A \}$.  We also define the iterated sumset $kA := A + \ldots + A$ for $k \geq 1$ (where $k$ summands appear on the right-hand side), with the convention $0A = \{0\}$, and the iterated sum-and-difference-set 
$$\pm k A := k(-A \cup \{0\} \cup A) = \bigcup_{j,j' \geq 0: j+j' \leq k} jA - j'A.$$

Define the \emph{doubling constant} of an additive set $(A,G)$ to be the ratio $|2A|/|A|$, where $|A|$ denotes the cardinality of $A$.  We have the well-known inverse theorem of Freiman \cite{frei} that classifies (up to constants) the sets of small doubling in the integers:

\begin{theorem}[Freiman's theorem in $\Z$]\cite{frei} Let $(A,\Z)$ be an additive set of integers of doubling constant at most $K$ for some $K \geq 1$.  Then there exists a generalised arithmetic progression $P$ of rank $O_K(1)$ and cardinality $|P| \ll_K |A|$ which contains $A$.
\end{theorem}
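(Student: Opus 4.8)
The plan is to follow Ruzsa's proof of Freiman's theorem (an alternative to Freiman's original, more geometric argument), which assembles the result from four ingredients: elementary sumset inequalities, a \emph{modelling} step passing to a cyclic group, Fourier analysis in that cyclic group in the spirit of Bogolyubov, and the geometry of numbers. Throughout, all implied constants may depend on $K$. \textbf{First}, from $|2A| \le K|A|$ I would derive the Pl\"unnecke--Ruzsa inequalities $|kA - \ell A| \ll_K |A|$ for all bounded $k,\ell \ge 0$, via the Ruzsa triangle inequality together with the Pl\"unnecke graph estimate (or Petridis's tensor-power argument); in particular $|2A-2A| \ll_K |A|$ and $|16A-16A| \ll_K |A|$. \textbf{Second}, the Ruzsa modelling lemma then produces a prime $N \ll_K |A|$ and a set $A' \subseteq \Z/N\Z$ with $|A'| = |A|$ that is Freiman $16$-isomorphic to $A$; since also $N \ge |A'|$, the density $\alpha := |A'|/N$ satisfies $\alpha \gg_K 1$, and $|kA' - \ell A'| = |kA - \ell A| \ll_K |A|$ for $k+\ell \le 16$.

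\textbf{Third}, I would run Bogolyubov's argument inside $\Z/N\Z$. Writing $\Lambda := \{\xi : |\widehat{1_{A'}}(\xi)| \ge \rho\alpha\}$ for a small absolute constant $\rho$, Parseval's identity gives $|\Lambda| \ll_K 1$. Expanding the convolution $1_{A'} \ast 1_{A'} \ast 1_{-A'} \ast 1_{-A'}$ into Fourier modes, the frequencies in $\Lambda$ contribute phases close to $1$ on the Bohr set $B(\Lambda,\rho')$ for a suitable absolute $\rho'$, while the remaining frequencies contribute a term bounded by Parseval; one concludes that this convolution is strictly positive on $B(\Lambda,\rho')$. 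Since its support is $A' + A' - A' - A' = 2A'-2A'$, we obtain $B(S,\rho') \subseteq 2A'-2A'$ for a frequency set $S$ with $|S| \ll_K 1$.

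\textbf{Fourth}, I would convert this Bohr set into a genuine progression using the geometry of numbers: a Bohr set $B(S,\rho')$ in $\Z/N\Z$ with $|S| = d$ contains a proper symmetric generalised arithmetic progression $Q$ of rank at most $d$ with $|Q| \gg_d \rho'^{\,d} N$, obtained by applying Minkowski's second theorem to the lattice of $n \in \Z$ with $\|n\xi/N\|_{\R/\Z} \le \rho'$ for all $\xi \in S$ and choosing one generator per successive minimum; hence $2A'-2A'$ contains a proper GAP $Q$ of rank $\ll_K 1$ with $|Q| \gg_K N \gg_K |A|$. \textbf{Fifth}, since $|A' + Q| \le |3A'-2A'| \ll_K |A'| \ll_K |Q|$, Ruzsa's covering lemma yields $X \subseteq A'$ with $|X| \ll_K 1$ and $A' \subseteq X + Q - Q$; absorbing the elements of $X$ as additional length-$2$ generators, and using $|Q-Q| \le 2^{\operatorname{rank}(Q)}|Q|$ (valid since $Q$ is proper), we obtain a GAP $Q'$ of rank $\ll_K 1$ and cardinality $\ll_K |Q| \ll_K |A|$ with $A' \subseteq Q'$. \textbf{Sixth}, the base point and the (boundedly many) generators of $Q'$ all lie in $5A'-4A'$, a bounded iterated sum-and-difference set of $A'$; since $A$ is Freiman $16$-isomorphic to $A'$, the inverse isomorphism transports $Q'$ to a generalised arithmetic progression $P \subseteq \Z$ of the same rank and cardinality with $A \subseteq P$, which is the assertion.

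The step I expect to be the main obstacle is the \textbf{fourth}: extracting from the merely approximate additive structure of a Bohr set an \emph{honest} progression that is simultaneously of rank $O_K(1)$ and of cardinality comparable to $N$ genuinely requires the geometry of numbers, and one must control the successive minima carefully to keep both parameters in hand. By contrast, Steps~3 and~5, though indispensable, are comparatively mechanical once the right lemmas are isolated; the remaining care needed is in bookkeeping the (at least exponential) dependence on $K$ across all six steps, and in ensuring at each stage that the progression produced can be taken proper so that the pull-back in Step~6 is legitimate.
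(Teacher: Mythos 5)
The paper does not give a proof of this statement; it cites Freiman's monograph \cite{frei} and treats it as background, so there is no proof in the paper to compare against. Your proposal is a correct outline of Ruzsa's standard proof of Freiman's theorem (Pl\"unnecke--Ruzsa, modelling in $\Z/N\Z$, Bogolyubov plus geometry of numbers, Ruzsa covering), which is the canonical modern route and certainly establishes the theorem.

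There is, however, one genuine technical gap in your Step~2. The Ruzsa modelling lemma does \emph{not} yield a Freiman $16$-isomorphism from all of $A$ onto a dense subset of $\Z/N\Z$. Starting from $|16A-16A| \ll_K |A|$, what it provides is a prime $N \ll_K |A|$ and a \emph{subset} $A_1 \subseteq A$ with $|A_1| \geq |A|/16$ that is Freiman $16$-isomorphic to some $A' \subseteq \Z/N\Z$; one cannot in general avoid collisions of all of $A$ modulo an $N$ this small. Consequently your Step~6 delivers a generalised arithmetic progression $P_1 \subset \Z$ of rank $O_K(1)$ and size $\ll_K |A|$ containing only $A_1$, not $A$. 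The repair is a further covering step in $\Z$ after Step~6: since $|A + A_1| \leq |2A| \leq K|A| \ll_K |A_1| \leq |P_1|$, Ruzsa's covering lemma gives a set $Y$ with $|Y| \ll_K 1$ and $A \subseteq Y + A_1 - A_1 \subseteq Y + P_1 - P_1$, and one then absorbs $Y$ and the extra differencing into $P_1$ as $O_K(1)$ additional generators of bounded length, producing the required $P \supseteq A$ with rank $O_K(1)$ and $|P| \ll_K |A|$. With this addition the argument is complete; Steps~1, 3, 4, and 5 are correct as described, modulo the minor imprecision that the object in Step~4 to which Minkowski's second theorem is applied is the lattice in $\R^{|S|}$ generated by $(\xi/N)_{\xi \in S}$ and $\Z^{|S|}$, not the set of good residues $n$ itself.
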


See Section \ref{notation-sec} below for the asymptotic notation used in this paper, and Appendix \ref{pse} for the definition of a generalised arithmetic progression.

In \cite{gr-4}, Green and Ruzsa generalised Freiman's theorem to arbitrary abelian groups:

\begin{theorem}[Freiman's theorem in abelian groups]\label{gr}\cite{gr-4} Let $(A,G)$ be an additive set of doubling constant at most $K$ for some $K \geq 1$.  Then there exists a coset progression $H+P$ of rank $O_K(1)$ and cardinality $|H+P| \ll_K |A|$ which contains $A$.  (See Appendix \ref{pse} for the definition of a coset progression.)  Furthermore we have $H+P \subset \pm O_K(1) A$.
\end{theorem}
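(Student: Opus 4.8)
The plan is to follow the classical ``model and lift'' strategy: control the iterated sumsets of $A$, pass to a \emph{bounded} finite model, do Fourier analysis (Bogolyubov's method) and geometry of numbers inside that model, and transport the structure obtained back to $G$. The already-known integer case feeds in through the Euclidean geometry-of-numbers step; the genuinely new difficulty is that $G$ may contain arbitrarily large torsion.

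\textbf{Step 1 (reductions).} Replacing $G$ by $\langle A\rangle$ I may assume $G$ is finitely generated, so $G\cong\Z^r\oplus T$ with $T$ finite abelian, the quantities $r$ and $|T|$ being unbounded. By the Pl\"unnecke--Ruzsa inequalities, $|\pm k A|\le K^{O(k)}|A|$ for every fixed $k$. By Ruzsa's covering lemma (in the form in which the covering set lies inside $A$) it then suffices to produce a coset progression $H+P$ of rank $O_K(1)$ with $H+P\subseteq\pm O_K(1)A$ and $|H+P|\gg_K|A|$. Indeed $A+(H+P)\subseteq\pm O_K(1)A$ then has cardinality $\ll_K|H+P|$, so $A$ is covered by $O_K(1)$ translates $s+((H+P)-(H+P))$ with $s\in A$; here $(H+P)-(H+P)=H+(P-P)$ is again a coset progression of rank $O_K(1)$ contained in $\pm O_K(1)A$, and the boundedly many translation vectors $s-s_0$ span a generalised progression of rank $O_K(1)$ contained in $\pm O_K(1)A$, so enlarging $H+(P-P)$ by this progression yields a single coset progression of rank $O_K(1)$ and cardinality $\ll_K|A|$ containing $A$, with the inclusion in $\pm O_K(1)A$ preserved.

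\textbf{Step 2 (bounded finite model).} Fix a large absolute constant $s$ (an order of Freiman isomorphism; $s=8$ will do). Combining Pl\"unnecke--Ruzsa with the Green--Ruzsa finite-model (Ruzsa modelling) lemma, I obtain a finite abelian group $Z$ with $|Z|\ll_{K,s}|A|$, a subset $A'\subseteq A$ with $|A'|\gg_{K,s}|A|$, and a Freiman $s$-isomorphism $\phi$ of $A'$ onto a subset $B\subseteq Z$. The essential point --- and the reason one cannot simply reduce to $\Z$ --- is that $\phi$ need not respect the torsion of $G$: it only needs ``enough room'', so $|Z|$ may be taken comparable to $|A|$ even when $T$ is enormous (for instance when $T=\Z/p\Z$ with $p$ a large prime and $A$ fills most of a progression inside it). Put $\beta:=|B|/|Z|$, so $\beta\gg_{K,s}1$; also $|Z|\ge|B|\gg_{K,s}|A|$.

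\textbf{Step 3 (Bogolyubov and geometry of numbers in $Z$).} By Parseval the large spectrum $\Gamma:=\{\chi\in\widehat Z:|\widehat{1_B}(\chi)|\ge\tfrac12\beta\}$ has $|\Gamma|=O_{K,s}(1)$, and Bogolyubov's argument shows $2B-2B$ contains the Bohr set $\Lambda:=\{x\in Z:|1-\chi(x)|<\tfrac1{10}\text{ for all }\chi\in\Gamma\}$. A standard geometry-of-numbers lemma --- Minkowski's theorem applied to a suitable lattice in $\R^{\Gamma}$ attached to the finite subgroup $\{(\chi(x))_{\chi\in\Gamma}:x\in Z\}$ of $(\R/\Z)^{\Gamma}$, which is essentially Freiman's theorem for $\Z^{|\Gamma|}$ and hence reduces to the integer case quoted above --- shows that $\Lambda$ contains a \emph{proper} coset progression $H_0+P_0$ of rank at most $|\Gamma|=O_K(1)$ with $|H_0+P_0|\gg_K|Z|\gg_K|A|$; moreover $P_0$ may be taken proper at scale $s$ at the cost of worsening only the implied constants, not the rank.

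\textbf{Step 4 (lifting) and the main obstacle.} A Freiman $s$-isomorphism with $s$ a small absolute constant induces a bijection between the sum-difference sets $2B-2B$ and $2A'-2A'$, and on such sets it respects all two-fold additive relations. Hence the subgroup part $H_0$ and the progression part $P_0$ of $H_0+P_0\subseteq 2B-2B$ transport: $H_0$ goes to a coset of a subgroup (its defining relations $h_1+h_2=h_1'+h_2'$ are two-fold), and each generator of $P_0$ is handled via the relations $(c+(n+1)v)+c=(c+nv)+(c+v)$, none of which exceeds order $2$, so properness at scale $s$ guarantees that the image is again a proper coset progression of the same rank. Applying $\phi^{-1}$ therefore yields a proper coset progression $H_1+P_1\subseteq 2A'-2A'\subseteq\pm O_K(1)A$ of rank $O_K(1)$ with $|H_1+P_1|=|H_0+P_0|\gg_K|A|$, and Step~1 converts this into the required $H+P\supseteq A$. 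I expect the main obstacle to be the interface of Steps 2--4: one must run Bogolyubov's method and the geometry of numbers with bounds explicit enough that the extracted coset progression keeps rank $O_K(1)$ and volume $\gg_K|Z|$, while simultaneously choosing the finite model so that $\phi^{-1}$ preserves properness and keeps the progression inside a bounded iterated sumset of $A$ --- a balancing act that is unavoidable precisely because arbitrary (and in particular large prime) torsion prevents any direct reduction to the integer case.
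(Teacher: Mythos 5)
The paper does not prove Theorem \ref{gr}; it is quoted directly from Green--Ruzsa \cite{gr-4}. The only part actually argued in the text is the Ruzsa-covering deduction of Theorem \ref{gr} from Theorem \ref{gr2}, which is precisely your Step~1; your Steps~2--4 then correctly outline the Green--Ruzsa proof of Theorem \ref{gr2} itself (finite modelling lemma, Bogolyubov in the bounded model, geometry of numbers to find a proper coset progression inside the Bohr set, pull-back under the Freiman isomorphism), so the approach is the expected one rather than a genuinely different route. Two small parameter slips are worth flagging, though neither changes the shape of the argument: in Step~4, for the induced bijection between $2B-2B$ and $2A'-2A'$ to be a Freiman $2$-isomorphism --- which is what transports the subgroup and progression structure and preserves properness --- the order of $\phi$ must be at least $16$, not $8$, since a $2$-relation $x+y=z+w$ among elements of $2B-2B$ unwinds to a relation among as many as $16$ elements of $B$; and in Step~3, with spectrum threshold $\tfrac12\beta$ the $\chi\notin\Gamma$ contribution in Bogolyubov's computation is only bounded by $\beta^{3}/4$, which exceeds the main term $\beta^{4}$ once $\beta$ is small, so the threshold should be taken of size on the order of $\beta^{3/2}$; this still gives $|\Gamma|\ll\beta^{-2}=O_{K,s}(1)$, as needed.
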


This result was deduced from the following closely related fact (see \cite[Section 5]{gr-4}; the case $G=\Z$ was treated earlier in \cite{ruzsa}, \cite{chang}):

\begin{theorem}[Ruzsa-Chang's theorem in abelian groups]\label{gr2}\cite{gr-4} Let $(A,G)$ be an additive set of doubling constant at most $K$ for some $K \geq 1$.  Then $\pm 4A$ contains a coset progression $H+P$ of rank $O_K(1)$ and cardinality $|H+P| \gg_K |A|$.
\end{theorem}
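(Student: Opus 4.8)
The plan is to follow the Fourier-analytic argument of Ruzsa and Chang, in the form developed by Green and Ruzsa for arbitrary abelian groups; its skeleton is to pass to a finite model, apply Bogolyubov's method to locate a Bohr set inside $2A-2A$, invoke Chang's spectral theorem to bound the rank of that Bohr set, and finally convert a bounded-rank Bohr set into a coset progression by the geometry of numbers. Since $A$ is finite we may assume $G$ is finitely generated, and then the first step is to build a finite model. Using the Pl\"unnecke--Ruzsa inequalities (which give $|\pm k A| \le K^{O_k(1)}|A|$ for each fixed $k$) together with Ruzsa's covering lemma, one produces, for a constant $s = s(K)$ to be chosen later, a finite abelian group $G'$ with $|G'| \ll_K |A|$ and a Freiman $s$-isomorphism $\phi$ from $A$ onto a subset $A' \subseteq G'$ with $|A'| \gg_K |A|$, such that the induced identification carries a subset of $\pm 4A$ containing $2A-2A$ bijectively onto a subset of $G'$ containing $2A'-2A'$. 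In particular the density $\alpha := |A'|/|G'|$ satisfies $\alpha \gg_K 1$.

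Next I would run Bogolyubov's method inside $G'$. Expanding the balanced convolution $1_{A'}\ast 1_{A'}\ast 1_{-A'}\ast 1_{-A'}$ on the Fourier side, and taking the spectral threshold $\rho$ to be a small constant multiple of $\alpha^{1/2}$ (which by Parseval keeps the small-spectrum contribution strictly below the zero-frequency main term), one sees that the Bohr set $B := \{x \in G' : \|\xi\cdot x\| < 1/8 \text{ for all } \xi \in \mathrm{Spec}_\rho(A')\}$ is contained in $2A'-2A'$, where $\mathrm{Spec}_\rho(A') := \{\xi \in \widehat{G'} : |\widehat{1_{A'}}(\xi)| \ge \rho |A'|\}$ is the large spectrum. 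The set $\mathrm{Spec}_\rho(A')$ may itself be large, but Chang's spectral theorem bounds the rank of the subgroup it generates by $O(\rho^{-2}\log(1/\alpha))$, which is $O_K(1)$ because $\alpha \gg_K 1$; absorbing the resulting bounded integer coefficients into the radius, $B$ then contains a Bohr set $B' := B(\{\xi_1,\dots,\xi_d\},\varepsilon)$ with $d = O_K(1)$ and $\varepsilon \gg_K 1$.

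Finally I would extract a coset progression from $B'$ by the geometry of numbers: applying Minkowski's second theorem to the lattice in $\Z^d$ pulled back from the cube $\{t \in (\R/\Z)^d : \|t_i\| < \varepsilon\}$ under the map $x \mapsto (\xi_i\cdot x)_{i=1}^d$ yields a proper coset progression $H'+P' \subseteq B' \subseteq 2A'-2A'$ of rank at most $d = O_K(1)$ and cardinality $|H'+P'| \gg_K |B'| \gg_K \varepsilon^d |G'| \gg_K |G'| \geq |A'| \gg_K |A|$. Since a Freiman $s$-isomorphism with $s = s(K)$ chosen large enough (in terms of the rank bound, which is why $s$ is allowed to depend on $K$) carries proper coset progressions of rank $O_K(1)$ to proper coset progressions of the same rank and cardinality, pulling $H'+P'$ back through $\phi$ produces a coset progression $H+P$ of rank $O_K(1)$ and cardinality $\gg_K |A|$ contained in $2A-2A \subseteq \pm 4A$, which is the desired conclusion.

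The one genuinely non-elementary ingredient, and the step I expect to be the main obstacle, is Chang's spectral theorem: it is precisely what forces the Bohr set to have bounded rank, and its proof (via Rudin's inequality, equivalently a dyadic entropy/pigeonhole estimate on the large spectrum) is where the real work lies. A secondary but genuine technical point is checking that the passage to the finite model and the Freiman-isomorphism pullback are compatible with the notion of a \emph{proper} coset progression, including preservation of properness; this is routine but is exactly where one must be careful about the choice of the model order $s$.
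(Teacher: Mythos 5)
The paper does not prove Theorem \ref{gr2}; it is cited verbatim from Green--Ruzsa \cite{gr-4} (with the case $G=\Z$ attributed to Ruzsa \cite{ruzsa} and Chang \cite{chang}), so there is no internal proof to compare against. Your sketch is a faithful reconstruction of the argument in the cited source: modeling lemma, Bogolyubov, Chang's spectral bound on the large spectrum, geometry of numbers to extract a proper coset progression from the Bohr set, and pullback.

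Two small remarks. First, the modeling lemma produces a Freiman $s$-isomorphism only of a large subset $A_0 \subseteq A$ (with $|A_0| \gg_K |A|$), not of $A$ itself, onto a dense $A' \subseteq G'$; this is harmless since $2A_0 - 2A_0 \subseteq \pm 4A$. Second, the apparent tension in choosing $s=s(K)$ against the eventual rank bound $d = O_K(1)$ (which itself depends, via the density $\alpha$, on $s$) is not actually present: a Freiman $2$-isomorphism of $2A_0-2A_0$ onto $2A'-2A'$ already carries a proper coset progression of arbitrary rank to a proper coset progression of the same rank and dimensions, because a Freiman $2$-homomorphism of a proper coset progression normalized to send $0$ to $0$ is automatically affine (a subgroup maps to a subgroup, and each generator $v_i$ maps to some $w_i$ with the map $n_1 v_1 + \cdots + n_r v_r \mapsto n_1 w_1 + \cdots + n_r w_r$ forced by chaining additive quadruples). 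So a fixed $s$ on the order of $8$ or $12$ suffices, independent of $K$, and no circularity arises.
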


Indeed, one easily verifies (using the Ruzsa-Pl\"unnecke sum set estimates, see e.g. \cite[Chapter 2]{tao-vu}) that an additive set $(A,G)$ of doubling constant at most $K$ can be covered by $O_K(1)$ translates (by elements in $\pm 5A$) of the coset progression $H+P \subset \pm 4A$ identified in Theorem \ref{gr2}, and the union of these translates can be contained in a slightly larger coset progression in $\pm O_K(1) A$.  (See also \cite[Section 6]{bg} for further discussion.)

Now we consider the analogous situation for non-abelian groups.  Define a \emph{multiplicative set} to be a pair $(A,G)$ where $G = (G,\cdot)$ is a multiplicative group and $A$ is a finite non-empty subset of $G$.  We can define the product set $A \cdot B := \{ab: a \in A, b \in B \}$, reflection $A^{-1} := \{a^{-1}: a \in A \}$, the iterated product set $A^k := A \cdot \ldots \cdot A$, iterated product-quotient-set 
$$A^{\pm k} := (A \cup \{1\} \cup A^{-1})^k,$$
doubling constant $|A^2|/|A|$, and tripling constant $|A^3|/|A|$, in exact analogy with the additive case.  
We adopt the notation $\prod_{i=1}^n A_i := A_1 \cdot \ldots \cdot A_n$ and $\prod_{i=n}^1 A_i := A_n \cdot \ldots \cdot A_1$ for multiplicative sets $(A_1,G),\ldots,(A_n,G)$; note the order of multiplication is important when $G$ is not abelian.  

It is also convenient to introduce the following definition.

\begin{definition}[$K$-control]  Let $K \geq 1$.  A multiplicative set $(A,G)$ is said to be \emph{$K$-controlled} by another multiplicative set $(B,G')$ if $G'$ is a subgroup of $G$, if $|B| \leq K |A|$, and there exists another finite non-empty subset $X$ of $G$ with $|X| \leq K$ such that $A \subset (X \cdot B) \cap (B \cdot X)$.
\end{definition}

\begin{remark}\label{control}  Observe that if $(A,G)$ is $K$-controlled by $(B,G')$ and $B$ has doubling constant at most $K$, then
$$|A \cdot A| \leq |X \cdot B \cdot B \cdot X| \leq K^2 |B \cdot B| \leq K^3 |B| \leq K^4 |A|.$$
Thus small doubling of $A$ is implied by small doubling of $B$ if $A$ is controlled by $B$.  We also observe a transitivity property: if $(A,G)$ is $K$-controlled by $(B,G')$, and $(B,G')$ is $K'$-controlled by $(C,G'')$, then $(A,G)$ is $KK'$-controlled by $(C,G'')$.
\end{remark}

The coset progression $H+P$ obtained in Theorem \ref{gr2} can be easily shown to $O_K(1)$-control $A$ (viewing the ambient group $G$ multiplicatively).
Thus we see that in the abelian case, sets of small doubling are controlled by coset progressions.  It is of interest to understand what the analogous controlling sets are in non-abelian groups.

Our results in this paper will mostly be restricted to solvable groups.  A solvable group which is particularly easy to analyse (and which serves as a simple model case for our more general results) is the \emph{lamplighter group}:

\begin{definition}[Lamplighter group]\label{lamp-def}  Let $\F_2^\Z$ be the additive group $\F_2^\Z := \{ (x_i)_{i \in \Z}: x_i \in \F_2 \hbox{ for all } i \in \Z \}$, where $\F_2$ is the field of two elements, and let $(\F_2^\Z)_0$ be the finitely generated subgroup of $\F_2^\Z$ consisting of those sequences $(x_i)_{i \in \Z}$ with at most finitely many $x_i$ non-zero.  Let $T: \F_2^\Z \to \F_2^\Z$ be the shift automorphism $T(x_i)_{i \in \Z} := (x_{i-1})_{i \in \Z}$; this preserves $(\F_2^\Z)_0$. The \emph{extended lamplighter group} $\Z \ltimes \F_2^\Z$ is the set $\{ (n, x): n \in \Z, x \in \F_2^\Z \}$ with the group law
$$ (n, x) \cdot (n',x') := (n+n', T^{n'}(x) + x').$$
The \emph{lamplighter group} $\Z \ltimes (\F_2^\Z)_0$ is the subgroup of $\Z \ltimes \F_2^\Z$ in which the second coordinate lies in $(\F_2^\Z)_0$.
\end{definition}

Our first main result, which we prove in Section \ref{lamp-sec}, classifies all sets of small doubling in the lamplighter and extended lamplighter groups up to constants.

\begin{theorem}[Freiman's theorem for the lamplighter group]\label{frei-lamp}  Let $(A, \Z \ltimes \F_2^\Z)$ be a multiplicative set of doubling constant at most $K$ for some $K \geq 1$.  Then $A$ is $O_K(1)$-controlled by a set $B$ which is of one of the following forms:
\begin{itemize}
\item[(Case 1)] $B = \{0\} \times V$ for some finite-dimensional subspace $V$ of $\F_2^\Z$.
\item[(Case 2)] $B = \{ (n,\phi(n)+v): n \in P, v \in V \}$, where $V$ is a $T^d$-invariant finite-dimensional subspace of $\F_2^\Z$ for some $d \geq 1$, $P \subset d \Z$ is a generalised arithmetic progression of rank $O_K(1)$, and $\phi: P \to \F_2^\Z$ is a function whose graph $n \mapsto (n,\phi(n))$ \emph{Freiman isomorphism modulo $V$} in the sense that
$$(n_1,\phi(n_1)) \cdot (n_2,\phi(n_2)) = (n_3,\phi(n_3)) \cdot (n_4,\phi(n_4)) \hbox{ mod } V$$ 
whenever $n_1,n_2,n_3,n_4 \in P$ are such that $n_1+n_2=n_3+n_4$. 
\end{itemize}
If $A \subset \Z \ltimes (\F_2^\Z)_0$, then one can take $V$ to be a finite-dimensional subspace of $(\F_2^\Z)_0$ in Case 1, and can take $V$ to be trivial in Case 2.
\end{theorem}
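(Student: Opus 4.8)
The plan is to study $A$ through the projection homomorphism $\pi\colon\Z\ltimes\F_2^\Z\to\Z$, $(n,x)\mapsto n$, whose kernel is the abelian group $N:=\{0\}\times\F_2^\Z\cong\F_2^\Z$, in which every element has order dividing $2$. Writing $\bar A:=\pi(A)$ and $A_n:=\{x:(n,x)\in A\}$ for $n\in\bar A$, we have $A=\bigsqcup_{n\in\bar A}\{n\}\times A_n$, and the identity we use throughout is $(A\cdot A)_{n_1+n_2}\supseteq T^{n_2}A_{n_1}+A_{n_2}$, so that each fibre $(A\cdot A)_{n_1+n_2}$ has size at least $\max(|A_{n_1}|,|A_{n_2}|)$. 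The argument splits on whether $\bar A=\{0\}$. If it is, then $A\subseteq N$ is an additive set in an $\F_2$-vector space, and Theorem~\ref{gr} (or the elementary $\F_2$ special case) contains $A$ in a coset of a finite-dimensional subspace $V$ of $\F_2^\Z$ with $|V|\ll_K|A|$, so that $\{0\}\times V$ $O_K(1)$-controls $A$; and $V$ can be taken inside $(\F_2^\Z)_0$ when $A$ is. This is Case~1, so from now on $\bar A\neq\{0\}$ and we aim for Case~2.

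The first step is to locate the base progression. Put $L:=\max_n|A_n|$ and fix $n_0$ attaining it. Since $(A\cdot A)_{n_0+n}$ has size at least $L$ for every $n\in\bar A$, the ``popular'' set $\bar B:=\{m\in\pi(A\cdot A):|(A\cdot A)_m|\ge L/(2K)\}$ contains $n_0+\bar A$; comparing $\sum_m|(A\cdot A)_m|=|A^2|\le K|A|\le K|\bar A|L$ with $|\bar B|\cdot L/(2K)$ gives $|\bar B|\ll_K|\bar A|$ (in particular $|A|\asymp_K|\bar A|L$), and the same computation for $A^4$, together with the standard non-abelian Pl\"unnecke--Ruzsa estimates, gives $|\bar B+\bar B|\ll_K|\bar B|$. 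Thus Freiman's theorem in $\Z$ applies to $\bar B$ and, after a harmless left translation of $A$ making $0\in\bar A$, places $\bar A$ inside a generalised arithmetic progression of rank $O_K(1)$ and size $\ll_K|\bar A|$. A standard refinement (intersecting with $d\Z$ for a multiple $d$ of $\gcd(\bar A)$ to be chosen below, and passing to the subset of ``popular'' base points) will produce the eventual $P\subseteq d\Z$.

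The heart of the proof is the construction of the fibre subspace $V$. Summing $|(A\cdot A)_{n_1+n_2}|$ over all pairs $(n_1,n_2)\in\bar A\times\bar A$, which is bounded by $(\max_m r_{\bar A+\bar A}(m))\,|A^2|\le|\bar A|^2\,KL$, shows that outside a proportion of pairs that can be made as small as we like (depending only on $K$) one has $|T^{n_2}A_{n_1}+A_{n_2}|\le|(A\cdot A)_{n_1+n_2}|\le O_K(L)$; combined with $|A|\asymp_K|\bar A|L$, which makes the set $S$ of good indices $n$ (those with $|A_n|\gg_K L$) satisfy $|S|\gg_K|\bar A|$, this yields a positive (in terms of $K$) proportion of pairs $(n_1,n_2)\in S\times S$ with $|T^{n_2}A_{n_1}+A_{n_2}|\asymp_K|A_{n_1}|\asymp_K|A_{n_2}|\asymp_K L$. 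For such a pair the Ruzsa triangle inequality forces $A_{n_1}$ and $A_{n_2}$ to have doubling $O_K(1)$ in $\F_2^\Z$, and Theorem~\ref{gr} together with Ruzsa's covering lemma then produces, from a suitably chosen good index $n_*$, a single subspace $V_0$ of size $\asymp_K L$ such that every good fibre, and hence (good fibres being dense among all fibres, and every fibre pairing with a good one) every fibre $A_n$, is covered by $O_K(1)$ translates of $T^{n-n_*}V_0$. Comparing the subspaces attached to the pairs $(n_*,n_*),(n_*,n_*+d),(n_*+d,n_*+d),\dots$ shows that $T^dV_0$ and $V_0$ are commensurable; enlarging $d$ to a genuine period of a bounded enlargement of $V_0$, one may take $V\supseteq V_0$ to be finite-dimensional, $T^d$-invariant, of size $\ll_K L$, with every $A_n$ covered by $O_K(1)$ translates of $\phi(n)+V$ for any chosen base point $\phi(n)\in A_n$.

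Finally one assembles $B$. Choose $P\subseteq d\Z$ as above (rank $O_K(1)$, size $\ll_K|\bar A|$, still $O_K(1)$-dense in $\bar A$), pick $\phi(n)\in A_n$ for $n\in P\cap\bar A$, and absorb the $O_K(1)$ covering translates into a slightly larger $T^d$-invariant $V$ (still of size $\ll_K L$) so that $A_n\subseteq\phi(n)+V$. The doubling hypothesis, through the pair count above, forces the graph of $\phi$ to be a Freiman isomorphism modulo $V$: when $n_1+n_2=n_3+n_4$, the cosets $T^{n_2}\phi(n_1)+\phi(n_2)+V$ and $T^{n_4}\phi(n_3)+\phi(n_4)+V$ both lie among the $O_K(1)$ cosets of $V$ meeting $(A\cdot A)_{n_1+n_2}$, and pigeonholing over a dilate $A^{\pm C}$ (equivalently, passing to a bounded-index sub-progression of $P$) collapses the count to $1$, giving the required identity. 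Then $B:=\{(n,\phi(n)+v):n\in P,\ v\in V\}$ has $|B|=|P||V|\ll_K|\bar A|L\asymp_K|A|$, and combining the covering of $\bar A$ by $O_K(1)$ translates of $P$ with the inclusions $A_n\subseteq\phi(n)+V$ and the $T^d$-invariance of $V$ yields a set $X$ of size $\ll_K1$ with $A\subseteq(X\cdot B)\cap(B\cdot X)$, which is Case~2; and if $A\subseteq\Z\ltimes(\F_2^\Z)_0$, then, a nonzero finitely supported vector having infinitely many linearly independent shifts, the only finite-dimensional $T^d$-invariant subspace of $(\F_2^\Z)_0$ is $\{0\}$, forcing $V=\{0\}$. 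I expect the third paragraph to be the main obstacle: naive sumset bounds control fibre difference sets only by $|A^{-1}A|\asymp_K|A|\asymp_K|\bar A|L$, overshooting the target $O_K(L)$ by the factor $|\bar A|$, so one must exploit that \emph{most} pairs of fibres have small mutual sumset and that good fibres are dense, and --- the genuinely delicate point --- pass from a subspace-per-fibre to a single $V$ controlling all fibres that can be made exactly $T^d$-invariant, which is exactly what makes $B$ a bona fide coset nilprogression and what distinguishes the extended lamplighter group from the ordinary one.
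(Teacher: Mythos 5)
Your overall strategy — project to $\Z$, control the base $\pi(A)$ by a generalised arithmetic progression, build a $T^d$-invariant subspace $V$, then form $B$ — is the same as the paper's, but two key steps are not carried through and leave genuine gaps.

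The first is the passage from ``$T^dV_0$ and $V_0$ are commensurable'' to an exactly $T^d$-invariant $V$ of size $\ll_K L$. Commensurability alone does not yield this: the chain $V_0 \subseteq V_0 + T^dV_0 \subseteq V_0 + T^dV_0 + T^{2d}V_0 \subseteq \cdots$ need not stabilize, and in $\F_2^\Z$ it can strictly increase forever. To stabilize by pigeonhole one needs an a priori upper bound: a single finite-dimensional subspace $W$ of size $\ll_K L$ containing all the iterated shifts simultaneously. The paper obtains this by first reducing (via Lemma~\ref{small}) to $A$ being an $O_K(1)$-approximate group and applying Lemma~\ref{proj}(iii) to $(\ker(\pi)\cap A^{100})^3$ to produce such a $W$; it then pigeonholes on the increasing chain $W_0\subseteq W_1\subseteq\cdots$ of subspaces of $W$ (each a power of $2$ in size) to find a level at which the chain stabilizes, giving exact $T^{v_i}$-invariance for every generator $v_i$ of $P$ and hence $T^d$-invariance with $d=\gcd(v_1,\ldots,v_r)$. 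Your fibre-by-fibre version never establishes such a containing $W$, and ``enlarging $d$ to a genuine period of a bounded enlargement of $V_0$'' is not something commensurability by itself supplies.

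The second is the Freiman isomorphism modulo $V$. You assert that ``pigeonholing over a dilate $A^{\pm C}$ collapses the count to $1$,'' but no pigeonhole argument forces the $O_K(1)$ cosets of $V$ meeting a given fibre $(A\cdot A)_m$ to coincide, and passing to a sub-progression does not a priori remove the extras. The paper circumvents this with a different mechanism: having reduced to $A$ an approximate group, it sets $B := V\cdot(\pi^{-1}(P)\cap A^4)$ and observes that, because $V\supseteq \ker(\pi)\cap A^{20}$ and $V$ is invariant under the shifts arising from $P$, one has $\ker(\pi)\cap B^4 = V$. The Freiman-isomorphism-mod-$V$ identity is then automatic: if $n_1+n_2=n_3+n_4$, the element $(n_1,\phi(n_1))(n_2,\phi(n_2))\bigl((n_3,\phi(n_3))(n_4,\phi(n_4))\bigr)^{-1}$ lies in $\ker(\pi)\cap B^4=V$. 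Your construction misses the crucial containment $V\supseteq\ker(\pi)\cap A^{20}$: your $V$ is built only to cover fibres up to $O_K(1)$ translates, not to contain the kernel fibre of a fixed power of $A$, and that is what the closure argument needs.

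Both gaps trace back to the same omission: you did not first replace $A$ by an approximate group. Once $A$ is an approximate group, all the sets $\ker(\pi)\cap A^k$ for $k\ge 3$ have comparable size $\sim |A|/|\pi(A)|$, which simultaneously furnishes the bounded ambient space $W$ for the stabilizing-chain argument and the inclusion $\ker(\pi)\cap B^4\subseteq V$ for the Freiman-isomorphism step. Without that reduction, the fibre-by-fibre bookkeeping runs exactly into the obstacle you yourself flag at the end of the third paragraph.
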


\begin{remark}  
In the converse direction, it is easy to see that the sets $B$ of the above form have small doubling, and thus any set controlled by them does also.  
This result strengthens an earlier result of Lindenstrauss \cite{linden}, who showed that the lamplighter group does not admit a F{\o}lner sequence of sets of uniformly small doubling constant.  Note that while $\F_2^\Z$ contains plenty of finite-dimensional invariant subspaces (e.g. the space of $L$-periodic sequences for some fixed period $L$), $(\F_2^\Z)_0$ contains no non-trivial invariant subspaces, which helps explain why the situation is simpler in that case.  In view of the above theorem, it seems of interest to classify all possible Freiman isomorphisms from generalised arithmetic progressions into $(\F_2^\Z)_0$ (or into $\F_2^\Z$ modulo an invariant subspace $V$), but this appears to be a somewhat difficult task.
\end{remark}

Now we turn to more general solvable groups.  We will also consider a special subclass of solvable groups, which we will call ``totally torsion-free solvable groups'':

\begin{definition}[Totally torsion-free solvable group]  Let $G = (G,\cdot)$ be a group.  The \emph{derived series} $G^{(0)} \geq G^{(1)} \geq \ldots$ of $G$ is formed by setting $G^{(0)} := G$ and $G^{(i+1)} := [G^{(i)},G^{(i)}]$ for $i=0,1,\ldots$; observe that these are normal subgroups of $G$.  A group is \emph{solvable} if $G^{(l+1)}$ is trivial for some $l$, known as the \emph{derived length} of $G$.  A group $G$ is \emph{torsion-free} if every non-identity element in $G$ has infinite order (thus if $a \in G$ and $a^n=1$ for some non-zero $n$, then $a=1$).  A solvable group $G$ is \emph{totally torsion-free} if $G/G^{(i)}$ is torsion-free for every $i$.
\end{definition}

\begin{example}  The $ax+b$ group of affine transformations $x \mapsto ax+b$ is a solvable group of derived length $2$ which is totally torsion free.  However, if one considers the subgroup $G$ of transformations $x \mapsto ax+b$ in which $a$ is a power of $3$, and $b$ is an integer divided by a power of $3$, then this group is still solvable of derived length $2$, and is torsion-free, but is no longer totally torsion-free.  Indeed, one easily verifies that $[G,G]$ consists of the translations $x \mapsto x+b$ in which $b$ is an \emph{even} integer divided by a power of $3$, and thus $G/G^{(1)}$ contains elements of order two (e.g. the image of the translation $x \mapsto x+1$).
\end{example}

Next, we need to generalise the notion of a coset progression to the virtually nilpotent setting.  

\begin{definition}[Coset nilprogression]\label{cosdef}  Let $G = (G,\cdot)$ be a group and $l \geq 0$ be an integer.  For each $1 \leq i \leq l$, let $r_i \geq 0$ be an integer, let $H_{i,0} = (H_{i,0},+)$ be a finite abelian group, and let $H_{i,j} := \{-N_{i,j},\ldots,N_{i,j}\}$ for $1 \leq j \leq r_i$ be intervals.  Suppose we have maps $\phi_{i,j}: H_{i,j} \to G$ for $0 \leq j \leq r_i$ obeying the following properties, where we abbreviate
$$ A_{i} := \phi_{i,0}(H_{i,0}) \cdot \ldots \cdot \phi_{i,r_i}(H_{i,r_i}); \quad A_{\geq i} := A_{l} \cdot \ldots \cdot A_{i}$$
\begin{itemize}
\item For all $1 \leq i \leq l$ and $0 \leq j \leq r_i$, we have 
\begin{equation}\label{ioi}
\phi_{i,j}(0) = 1.
\end{equation}
\item For any $1 \leq i \leq l$, $0 \leq j \leq r_i$, and $n, n' \in H_{i,j}$ with $n+n' \in H_{i,j}$, we have
\begin{equation}\label{okey}
 \phi_{i,j}(n) \phi_{i,j}(n') \in A_{\geq i+1} \cdot \phi_{i,j}(n+n').
 \end{equation}
\item For any $1 \leq i \leq l$, $0 \leq j,j' \leq r_i$, $n \in H_{i,j}$, and $n' \in H_{i,j'}$, we have
\begin{equation}\label{dokey}
 [\phi_{i,j}(n), \phi_{i,j'}(n')] \in A_{\geq i+1}.
 \end{equation}
\item For any $1 \leq i' < i \leq l$, $0 \leq j \leq r_i$, $0 \leq j' \leq r_{i'}$, $n \in H_{i,j}$, and $n' \in H_{i',j'}$, we have
\begin{equation}\label{soop}
 [\phi_{i',j'}(n'), \phi_{i,j}(n)] \in A_{\geq i+1} \cdot \phi_{i,0}(H_{i,0}) \cdot \ldots \cdot \phi_{i,\max(j-1,0)}(H_{i,\max(j-1,0)}).
\end{equation}
\end{itemize}
Then we refer to the multiplicative set 
$$(A_{\geq 1}, G) = ( \prod_{i=l}^1 \prod_{j=0}^{r_i} \phi_{i,j}(H_{i,j}), G)$$
as a \emph{coset nilprogression} of \emph{derived length} $l$, \emph{ranks} $r_1,\ldots,r_l$, and \emph{volume} $\prod_{i=1}^l \prod_{j=0}^{r_i} |H_{i,j}|$.  (If $l=0$, we adopt the convention that $A_{\ge 1} = \{1\}$.)

If the groups $H_{i,0}$ and maps $\phi_{i,0}$ are trivial, we refer to the coset nilprogression simply as a \emph{nilprogression}.
\end{definition}

\begin{examples} The only coset nilprogression of derived length $0$ is $\{1\}$.  A set $(A,G)$ is a coset nilprogression of derived length $1$ and rank $r$ if and only if it is a coset progression of rank $r$ in an abelian subgroup of $G$ (which we view additively).  The sets $B$ in Case 2 of Theorem \ref{frei-lamp} are coset nilprogressions of derived length $2$ and ranks $0, r$, where $r$ is the rank of the underlying generalised arithmetic progression $P$.  If $G$ is a $2$-step nilpotent group, $e_1, e_2 \in G$, and $N \geq 1$, then the set
$$ A := \{ [e_1,e_2]^{n_{12}} e_1^{n_1} e_2^{n_2}: n_1,n_2,n_{12} \in \Z; |n_1|, |n_2| \leq N; |n_{12}| \leq 100 N^2 \}$$
is a coset nilprogression of derived length $2$ and ranks $1,2$.   If $(A,G)$ is a coset nilprogression of derived length $l$ and ranks $r_1,\ldots,r_l$, and $0 \to H \to G' \to G \to 0$ is a finite abelian extension of $G$ (thus one has a projection homomorphism $\pi: G' \to G$ with kernel isomorphic to the finite abelian group $H$) then $(\pi^{-1}(A), G')$ is a coset nilprogression of derived length $l+1$ and ranks $r_1,\ldots,r_l,0$.
\end{examples}

\begin{example}
The following examples were contributed by Harald Helfgott. Let
$p$ be a large prime, let $F = F_p$ be the finite field of $p$ elements, and $G \subset SL_3(F)$
be the group of upper-triangular $3 \times 3$ matrices of determinant one and coefficients
in $F$; this is a solvable group of derived length $3$. Then the set
$$
A :=
\left\{ \begin{pmatrix} r^k & x & y \\ 0 & s^k & z \\ 0 & 0 & (rs)^{-k} \end{pmatrix}
: x,y,z \in F; -N \leq k \leq N \right\},$$
where $N \geq 1$ is an integer and $r,s \in F \backslash \{0\}$ are invertible elements, is a coset
nilprogression of derived length $3$ and ranks $1,0,0$. Indeed, we have $H_{1,0} = \{1\}$,
$H_{1,1}=\{-N,\ldots,N\}$, $H_{2,0} = F^2$, $H_{3,0} = F$ with
\begin{align*}
\phi_{1,1}(k) &:= \begin{pmatrix}
r^k & 0 & 0 \\
0 & s^k &0 \\
0 & 0 & (rs)^{-k}
\end{pmatrix}\\
\phi_{2,0}(x,y) &:= \begin{pmatrix}
1 &x & 0\\
0 & 1 & y \\
0 & 0 & 1
\end{pmatrix} \\
\phi_{3,0}(z) &= \begin{pmatrix}
1 & 0 & z \\
0 & 1&  0\\
0 & 0&  1
\end{pmatrix}.
\end{align*}
In a similar spirit, the set
$$
A :=
\left\{
\begin{pmatrix}
r^k & ms  & y\\
0 & r^k & z\\
0 & 0 & r^{-2k} 
\end{pmatrix}:
y,z \in \F; -M \leq m \leq M; -N \leq k \leq N
\right\}
$$
for integers $N,M \geq 1$, $r \in F \backslash \{0\}$, and $s \in F$ is a coset nilprogression of derived
lengths $3$ and ranks $1,1,0$ with $H_{1,0} = \{1\}$, $H_{1,1} = \{-N,\ldots,N\}$, $H_{2,0} = F$, $H_2 =\{-M,\ldots,M\}$, and $H_{3,0} = F$; we leave the construction of the $\phi_{i,j}$ to the interested reader.
\end{example}

\begin{remark} The property of being a coset nilprogression is preserved under Freiman homomorphisms\footnote{A \emph{Freiman homomorphism} $\phi: A \to A'$ of order $k$ from one multiplicative set $(A,G)$ to another $(A',G)$ is a map $\phi:A \to A'$ such that $\phi(a_1) \ldots \phi(a_k) = \phi(b_1) \ldots \phi(b_k)$ whenever $a_1,\ldots,a_k,b_1,\ldots,b_k \in A$ are such that $a_1 \ldots a_k = b_1 \ldots b_k$.} of sufficiently high order.  We will use this fact implicitly in Section \ref{kps}, when we lift a coset nilprogression from one group $G'$ to an extension $G$ of that group.  Also, given that doubling constants are preserved by Freiman isomorphisms, it is reassuring to know that coset nilprogressions are also, if one seeks to relate the former concept to the latter.
\end{remark}

\begin{remark}\label{tota} In the case that $G$ is totally torsion-free, we will be able to replace coset nilprogressions by nilprogressions throughout this paper.  However, the cosets appear to be necessary once this hypothesis is dropped. On the other hand, it is plausible to conjecture that one could be able to gather together all the finite groups in a coset nilprogression and quotient them out. More precisely, one could conjecture
that every coset nilprogression is controlled by a set $A$ containing a finite subgroup which is normal in $A$, and such that after quotienting out by this normal subgroup,
one obtains a standard nilprogression without any cosets. (Similar conjectures
have also been advanced by Harald Helfgott and by Elon Lindenstrauss (private
communications).) The results in this paper do not establish this fact, but it is
plausible that a refined analysis along these lines could establish this sort of result.
\end{remark}

Just as coset progressions are model examples of abelian sets of small doubling, coset nilprogressions turn out to be model examples of (solvable) small doubling:

\begin{lemma}[Polynomial growth of nilprogressions]\label{nilpoly}  Let $(A,G)$ be a coset nilprogression of derived length $l$ and ranks $r_1,\ldots,r_l$.  Then we have $|A^{\pm n}| \ll_{l,r_1,\ldots,r_l} n^{O_{l,r_1,\ldots,r_l}(1)} |A|$ for all $n \geq 1$.
\end{lemma}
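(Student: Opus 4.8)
The plan is to prove the bound by induction on the derived length $l$, using the layered structure of the coset nilprogression. For $l=0$ the set $A$ is trivial and there is nothing to prove. For the inductive step, write $A = A_{\ge 1} = A_{\ge 2} \cdot A_1$, where $A_1 = \phi_{1,0}(H_{1,0}) \cdot \ldots \cdot \phi_{1,r_1}(H_{1,r_1})$ and $A_{\ge 2}$ is (the product set underlying) a coset nilprogression of derived length $l-1$ in $G$, to which the inductive hypothesis applies. The key structural fact I want to extract from the axioms \eqref{ioi}--\eqref{soop} is that the subgroup $N := \langle A_{\ge 2} \rangle$ generated by the lower layers is normalised by $A_1$ — indeed \eqref{soop} says that conjugating $\phi_{i,j}(n)$ with $i \ge 2$ by an element $\phi_{1,j'}(n')$ of the top layer stays inside $A_{\ge 2} \cdot \phi_{1,0}(H_{1,0}) \cdot \ldots$, but since $i \ge 2 > 1$ the tail in that bound is empty, so the commutator lands in $A_{\ge 2} \subset N$. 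Hence $N$ is normal in the group $\langle A\rangle$, and $A_1$ descends to a well-defined set $\bar A_1$ in the quotient $\langle A\rangle / N$; moreover \eqref{okey} and \eqref{dokey} show that modulo $A_{\ge 2}$ the top layer behaves like a coset progression, so $\bar A_1$ is (controlled by) an ordinary coset progression of rank $r_1$ in an abelian group.

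With this in hand, the estimate on $A^{\pm n}$ factors into two pieces. First, $A^{\pm n}$ is contained in $(A_{\ge 2})^{\pm C n} \cdot (A_1)^{\pm n}$ for a suitable constant $C = C(l,r_1,\ldots,r_l)$: every element of $A^{\pm n}$ is a product of at most $n$ terms from $A_{\ge 2}$, $A_1$, and their inverses, and one can push all the top-layer factors to the right using the normality of $N$ and the commutator bound \eqref{soop} (each time one moves a lower-layer factor past a top-layer factor, one picks up a bounded number of extra lower-layer factors, so the lower-layer word length grows by at most a multiplicative constant). Second, $(A_1)^{\pm n}$ projects to $(\bar A_1)^{\pm n}$ in the abelian quotient, which by Lemma~\ref{nilpoly} in the $l=1$ case (equivalently, by the elementary polynomial-growth bound for coset progressions of rank $r_1$, $|(\bar A_1)^{\pm n}| \ll_{r_1} n^{r_1} |\bar A_1|$) grows polynomially; the fibre of the projection restricted to $(A_1)^{\pm n}$ is covered by $(A_{\ge 2})^{\pm C'n}$ using \eqref{okey} (each reduction of a top-layer product to canonical form $\phi_{1,j}(\cdot)$ costs a bounded number of $A_{\ge 2}$-factors, and there are $O(n)$ such reductions). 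Combining, $|A^{\pm n}| \ll (\text{poly in } n) \cdot |(A_{\ge 2})^{\pm C''n}|$, and the inductive hypothesis bounds the latter by $(C''n)^{O(1)} |A_{\ge 2}| \ll n^{O(1)} |A|$, since $|A_{\ge 2}| \le |A|$ trivially... actually one needs $|A| \gg |A_{\ge 2}|$, which follows because the natural surjection from $H_{1,0}\times\cdots\times H_{1,r_1}\times (\text{a parametrisation of } A_{\ge 2})$ onto $A$ is, up to bounded multiplicity coming from \eqref{okey}, compatible with the volume count.

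The main obstacle I anticipate is the bookkeeping in the ``push the top layer to the right'' step: the axioms \eqref{okey}--\eqref{soop} only give containments in products of the form $A_{\ge i+1} \cdot \phi_{i,0}(H_{i,0}) \cdot \ldots \cdot \phi_{i,j-1}(H_{i,j-1})$, not clean identities, so one has to set up an induction not just on $l$ but, within each layer, on the index $j$, carefully tracking how the number of accumulated factors at each level depends on $n$. The point to verify is that this number stays polynomial in $n$ (in fact, one should aim for: the number of level-$i$ factors after normalisation is $O_{l,\vec r}(n^{l-i+1})$ or so), which requires that each commutation move only multiplies the relevant word-length by a constant depending on the ranks, never by something depending on $n$. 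Once the right quantitative form of the inductive claim is pinned down, each individual move is a routine application of one of \eqref{okey}--\eqref{soop}, and the polynomial bound follows by composing $O(n)$ such moves across the $l$ layers.
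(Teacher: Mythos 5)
Your strategy is the same as the paper's in spirit: decompose $A = A_{\ge 2} \cdot A_1$, induct on $l$, and exploit the ``upper triangular'' nature of the commutation axioms to push top-level factors to one side. The paper formalises this via the dilate notation $A^M$, proving a multiplication law (Lemma~\ref{mult}) and a covering lemma (Lemma~\ref{control-o}) by double induction on $l$ and $r_1$, and then obtains the present lemma as a corollary of Corollary~\ref{control-cor}. So the two approaches share the same skeleton.

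The genuine gap in your write-up is the claim that when pushing $O(n)$ top-layer factors to the right, ``the lower-layer word length grows by at most a multiplicative constant,'' and hence $A^{\pm n} \subset (A_{\ge 2})^{\pm Cn}(A_1)^{\pm n}$ for a fixed $C$. As stated this is not justified, and the naive interpretation is false: to push the top word $a_1\cdots a_j$ past a single bottom factor $w$ you form $(a_1\cdots a_j)\,w\,(a_1\cdots a_j)^{-1}$, and if each conjugation by a single $a_i$ multiplies the bottom word length by a constant $C$, the $j$-fold conjugation gives $C^j$; summed over $j\le n$ this is exponential in $n$, not linear. What saves the argument is not word length but \emph{exponent} bookkeeping: repeated occurrences of the same generator $\phi_{i,j}$ combine via~\eqref{okey}, and the crucial structural fact—made precise in Lemma~\ref{mult}—is that the exponent $M_{i,j}$ after multiplication is the old one plus something \emph{polynomial only in the exponents at prior slots} (smaller $i$, or same $i$ and larger $j$), never in $M_{i,j}$ itself. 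This strict triangularity is what keeps the exponents, and hence the growth, polynomial; without it the naive ``word length times a constant'' heuristic leads to exponential blow-up. You do flag exactly this issue at the end and propose the right shape of inductive claim ($O(n^{l-i+1})$ at level $i$), so you see where the work is, but the write-up as given does not carry it out, and the intermediate containment $A^{\pm n}\subset (A_{\ge 2})^{\pm Cn}(A_1)^{\pm n}$ should be replaced by the dilate containment $A^{\pm n}\subset A^{(n^{c_{i,j}})}$ of~\eqref{sock}. (The same remark applies to your claim that the fibres over $(\bar A_1)^{\pm n}$ are covered by $(A_{\ge 2})^{\pm C'n}$: the honest bound is $(A_{\ge 2})^{\pm n^{O(1)}}$, which still suffices.)

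One small misreading worth correcting: you justify normality of $N=\langle A_{\ge 2}\rangle$ under $A_1$ by saying the ``tail'' in \eqref{soop} is empty when $i\ge 2$. It is not empty: the tail is $\phi_{i,0}(H_{i,0})\cdots\phi_{i,\max(j-1,0)}(H_{i,\max(j-1,0)})$, which sits at level $i$, not level $i'=1$. The conclusion you want still holds—since $i\ge 2$, that tail is contained in $A_{\ge 2}$, so the conjugate lands in $\langle A_{\ge 2}\rangle$—but the stated reason is wrong and should be fixed.
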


We prove this lemma in Section \ref{cosnil}.  The main result of this paper is the following converse to the above proposition in the solvable case:

\begin{theorem}[Freiman's theorem for solvable groups]\label{frei-solv}  Let $(A,G)$ be a multiplicative set in a solvable group $G$ of derived length at most $l$ and of doubling constant at most $K$ for some $K,l \geq 1$.  Then there exists a coset nilprogression $(A',G)$ of derived length $l$ and all ranks $O_{K,l}(1)$ and volume $\sim_{K,l} |A|$ which $O_{K,l}(1)$-controls $(A,G)$.  Furthermore we have $A' \subset A^{\pm O_{K,l}(1)}$.

If $G$ is totally torsion-free, we can ensure that $A'$ is a nilprogression rather than a coset nilprogression.
\end{theorem}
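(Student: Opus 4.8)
The plan is to induct on the derived length $l$. The base case $l=1$ is the abelian statement: Theorem~\ref{gr2} places a coset progression $H+P$ of rank $O_K(1)$ and size $\gg_K|A|$ inside $A^{\pm 4}$, and the Ruzsa covering lemma (exactly as in the passage from Theorem~\ref{gr2} to Theorem~\ref{gr} recalled just after the statement of the latter) upgrades this to a coset progression, i.e.\ a coset nilprogression of derived length $1$, enjoying the required control, volume, and containment in $A^{\pm O_K(1)}$; when $G$ is torsion-free the finite part $H$ is forced to be trivial, so $P$ is an honest generalised arithmetic progression.

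For the inductive step I would first make the harmless reduction of replacing $A$ by $A^{\pm m}$ for a large $m=O_{K,l}(1)$ (worsening the doubling only to $K^{O_m(1)}$, and costing only an $O_m(1)$ factor in the final $A^{\pm O(1)}$ containment), so that every set produced below automatically lies in $A^{\pm O_{K,l}(1)}$ and so that $A$ may be taken symmetric and containing the identity; via the standard product-set estimates one may further reduce to the situation in which $A$ is controlled by an approximate group, which makes the fibre arguments below clean. Now let $N:=G^{(l-1)}$, which is a normal abelian subgroup of $G$ (as $G^{(l)}$ is trivial), and let $\pi:G\to\bar G:=G/N$; then $\bar G$ is solvable of derived length at most $l-1$ and $\pi(A)$ has comparable doubling, so the inductive hypothesis yields a coset nilprogression $\bar A'=\bar A_{\ge 1}$ in $\bar G$ of derived length $l-1$, all ranks $O_{K,l}(1)$, volume $\sim_{K,l}|\pi(A)|$, controlling $\pi(A)$, and contained in $\pi(A^{\pm O_{K,l}(1)})$.

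The heart of the proof is then to lift $\bar A'$ to $G$ and prepend a new bottom layer living in $N$. Concretely: (a) choose lifts of the constituent maps $\bar\phi_{i,j}$ of $\bar A'$ to maps $\phi_{i,j}$ into $A^{\pm O_{K,l}(1)}$, defining layers $A_1,\dots,A_{l-1}$; (b) using the fibres of $\pi$ over $\pi(A)$ --- in particular a fibre of maximal size, to which one applies the abelian Green--Ruzsa theorem inside $N$, relating the doublings of $A$, $\pi(A)$, and the fibres by standard means --- produce a coset progression $Q\subset N$ of rank $O_{K,l}(1)$ and volume $\sim_{K,l}|A|/|\pi(A)|$ controlling every fibre simultaneously; (c) enlarge $Q$ to a coset progression $A_l\subset N$ that is invariant under conjugation by suitable bounded powers of the lifted generators, and that also contains all the $N$-valued discrepancies (the ``cocycles'' $\phi_{i,j}(n)\phi_{i,j}(n')\phi_{i,j}(n+n')^{-1}$ and the relevant commutators) introduced by the lifting in (a). Since $N$ is abelian its conjugation action factors through $\bar G$, so restricting the progression parts of $A_1,\dots,A_{l-1}$ to sublattices $d\Z$ with $d=O_{K,l}(1)$ makes this invariance attainable while keeping $A_l$ of rank $O_{K,l}(1)$ and volume $\sim_{K,l}|A|/|\pi(A)|$; this is precisely the mechanism forcing $V$ to be $T^d$-invariant and $P\subset d\Z$ in Case~2 of Theorem~\ref{frei-lamp}. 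With $A_l,\dots,A_1$ in hand one verifies the axioms \eqref{ioi}--\eqref{soop} of Definition~\ref{cosdef} for $A':=A_{\ge 1}$ --- the commutator axioms \eqref{dokey}, \eqref{soop} because all the commutators in question lie in $G^{(1)}$ and, by the equivariance from (c), land in the prescribed lower layers; the volume is $\mathrm{vol}(\bar A')\cdot\mathrm{vol}(A_l)\sim_{K,l}|A|$; and $A\subset XA'\cap A'X$ with $|X|=O_{K,l}(1)$ follows by combining the covering of $\pi(A)$ by translates of $\bar A'$ with the fibrewise covering by translates of $A_l$. For the totally torsion-free refinement: $\bar G=G/G^{(l-1)}$ is again totally torsion-free (its derived quotients are the groups $G/G^{(i)}$, $i\le l-1$), so $\bar A'$ is a nilprogression by induction, while $N=G^{(l-1)}\le G$ is torsion-free abelian, so $Q$ and $A_l$ carry no finite part; hence $A'$ is a nilprogression.

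I expect the main obstacle to be step (c): not the mere existence of a coset progression of the right size inside $N\cap A^{\pm O(1)}$, but the requirement that it be \emph{exactly} stable under conjugation by bounded powers of every upper-layer generator, with rank and volume still $O_{K,l}(1)$ --- in effect an iterated, non-commutative application of Green--Ruzsa in which one must propagate a flag in $N$ compatible with the $\bar G$-module structure (with appropriate bounds on the progression lengths), and show that the resulting chain of progressions stabilises after $O_{K,l}(1)$ steps. It is here that bounded derived length is genuinely used --- iterated commutators terminate --- and here that, absent the torsion-free hypothesis, the finite cosets cannot be eliminated. A secondary technical point, already present in the abelian theory, is the non-commutative fibre lemma needed in step (b); I would handle it through the reduction to the approximate-group case indicated above.
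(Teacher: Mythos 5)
Your skeleton is right: induct on derived length, peel off the last nontrivial derived subgroup $N=G^{(l-1)}$ (the paper's $V$), lift a coset nilprogression from $G/N$ and prepend a bottom layer in $N$. You have also correctly located the genuine difficulty, namely your step~(c): one needs a coset progression in $N$ that is, in a strong sense, stable under conjugation by the lifted upper--layer generators, with the commutators landing in the prescribed subprogressions so that \eqref{dokey} and \eqref{soop} hold. But the fix you propose there --- restricting the upper--layer progressions to sublattices $d\Z$ and ``iterating Green--Ruzsa'' until a flag stabilises --- is not a proof, and the paper does not proceed this way; it needs (and builds) a genuinely new tool, and applies it in a different order than you do.

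Specifically, the paper does \emph{not} project to $G/N$ and apply the induction hypothesis to $\pi(A)$ first. It first proves Proposition~\ref{key-prop}, which takes the conjugation action $\rho:G\to\operatorname{Aut}(V)$ on the fibre $E:=\ker\pi\cap A^{20}$ and produces a proper coset progression $H+P\subset V$ containing $E$ together with a \emph{large} centred subset $A'\subset A^{16}$ whose action on $H+P$ is virtually unipotent (it fixes $H$ and acts by an upper--triangular unipotent matrix on the $v_j$ modulo $H$). This is exactly the invariance you were missing, and it is not attained by lattice restrictions: it rests on producing a near--invariant set $E'$ via an $\ell^2$--pigeonholing/averaging argument (Proposition~\ref{near-invariant}, using the Balog--Szemer\'edi type Proposition~\ref{bz-prop2}), then a proper coset progression with a S\'ark\H{o}zy--type representation property (Proposition~\ref{good-coset}, using Proposition~\ref{sarkozy}), then pigeonhole refinements to make the unipotency exact (Corollaries~\ref{itg}, \ref{itv}, Proposition~\ref{itg2}). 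Only \emph{after} this is done does the paper project: it forms the stabiliser $\operatorname{Stab}(S)\cap(A')^4$ of the finite set $S$ of classes of $V\cap A^C$ modulo $\langle H+P\rangle$, shows this is a large set of bounded doubling, projects \emph{it} to $G'=G/V$, and applies the induction hypothesis to \emph{that} set (not to $\pi(A)$). This ordering is what guarantees that the lifts $\tilde\phi_{i,j}$ automatically have the right action on the bottom layer: they are lifts of elements already known to stabilise $S$ and act unipotently on $H+P$, so the commutator/cocycle inclusions \eqref{phi1}--\eqref{phi4} hold essentially by construction, and the bottom layer $A_l$ can be assembled out of $H$, $P_l$ (with dilated lengths $M^{r-i+1}N_i$), and a bounded family of exceptional elements $e_{j,i}$, without ever having to solve a stabilisation problem after the fact.

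In short: your proposal reverses the order of ``apply Key Proposition to the bottom fibre'' and ``apply the induction hypothesis to the quotient'', and therefore lands on a stabilisation problem that the paper avoids by design. The sublattice trick you invoke is borrowed from the lamplighter case (Theorem~\ref{frei-lamp}), where it works because there is a single top generator and the bottom module is elementary abelian; already for $l=2$ with rank $>1$, or for general $l$, the $\bar G$--module structure on $N$ does not collapse to a cyclic action, and one genuinely needs the near--invariance machinery of Proposition~\ref{key-prop}. A second, smaller omission is that you apply the induction hypothesis to all of $\pi(A)$ rather than to the image of a stabiliser; as a consequence, even if one somehow produced a conjugation--invariant $A_l$, there would be no reason the \emph{arbitrary} lifts of the $\bar\phi_{i,j}$ should preserve it, and no obvious way to repair this after the fact without effectively re--proving Proposition~\ref{key-prop}.
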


\begin{remark} Our proof of Theorem \ref{frei-solv} is completely effective, and in principle one could write down explicit values for the bounds $O_{K,l}(1)$ given above. However, due to our reliance on tools such as the Szemer\'edi regularity lemma, the bounds are incredibly poor (roughly speaking, they are an $l$-fold iterated tower exponential in $K$), and so we do not attempt to quantify them here.\footnote{Note added in proof: thanks to recent improvements in additive combinatorics technology, in  
particular the lemmas in \cite{sanders-2}, \cite{croot}, one could avoid use of the regularity lemma here and obtain
bounds which are ``merely'' of iterated exponential type rather than iterated tower-exponential.
However, there are still exponential losses in these lemmas and so even if one optimistically
assumes a ``polynomial Freiman-Ruzsa conjecture'', these methods do not seem to lead currently
to polynomial bounds. On the other hand, in the case of solvable linear groups over $\C$, the
recent results in \cite{bg2} only have exponential losses in the constants, while for simple linear groups
the best results are polynomial in nature (see \cite{bgt}, \cite{gill}, \cite{pyber}, \cite{varju} for the most recent results in this highly active area), and so it is conceivable that some refinement of these techniques could
eventually lead to polynomially quantitative results, conditionally on a polynomial Freiman-Ruzsa
type conjecture.}
\end{remark}

\begin{remark} Theorem \ref{frei-solv} generalises Theorem \ref{gr2} (which is basically the $l=1$ case). It is not a completely satisfactory description of the sets of small doubling in a solvable group, because we do not have a completely explicit classification of coset nilprogressions; this problem is analogous to the problem of classifying the Freiman isomorphisms appearing in Theorem \ref{frei-lamp} (and broadly analogous to the infamous extension problem for groups, in particular having a similar ``cohomological'' flavour), and will not be pursued here; see however \cite{bg} for some relevant recent progress in this direction. 
\end{remark}

In spite of the above deficiencies, we can use Theorem \ref{frei-solv}, together with some basic properties of coset nilprogressions, to establish some non-trivial applications.  For instance, we have the following result, proven in Section \ref{cosnil}:

\begin{lemma}[Virtual nilpotency of nilprogressions]\label{virtnil} Let $(A,G)$ be a coset nilprogression of derived length $l$ and ranks $r_1,\ldots,r_l$.  Then the group $\langle A \rangle$ generated by $A$ contains a subgroup $N$ of index $O_{l,r_1,\ldots,r_l}(|A|^{O_{l,r_1,\ldots,r_l}(1)})$ which is nilpotent of step $O_{l,r_1,\ldots,r_l}(1)$ and is generated by $O_{l,r_1,\ldots,r_l}(1)$ generators.

If $(A,G)$ is a nilprogression rather than a coset nilprogression, then $\langle A \rangle$ is itself nilpotent of step $O_{l,r_1,\ldots,r_l}(1)$ and is generated by $O_{l,r_1,\ldots,r_l}(1)$ generators.
\end{lemma}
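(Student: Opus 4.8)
The plan is to prove Lemma \ref{virtnil} by induction on the derived length $l$, exploiting the recursive structure built into Definition \ref{cosdef}. The key observation is that a coset nilprogression $(A_{\ge 1},G)$ of derived length $l$ sits in a short exact sequence: the tail $A_{\ge 2} = A_l \cdots A_2$ is (after checking the closure properties \eqref{ioi}--\eqref{soop} restrict appropriately) a coset nilprogression of derived length $l-1$ in the subgroup $G^{(1)}$-like object it generates, while the top layer $A_1 = \phi_{1,0}(H_{1,0}) \cdots \phi_{1,r_1}(H_{1,r_1})$ is a single ``coset progression layer'' of rank $r_1$ built from a finite abelian group $H_{1,0}$ and $r_1$ intervals. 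Properties \eqref{okey} and \eqref{dokey} say that modulo $A_{\ge 2}$ the generators $\phi_{1,j}$ multiply and commute like an abelian coset progression, and property \eqref{soop} says that commutators of the top layer with the lower layers stay inside $A_{\ge 2}$ times an already-seen piece of $A_1$ — this is precisely the statement that $\langle A_{\ge 2}\rangle$ is normalised (up to the usual slop) by $\langle A_1 \rangle$.

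First I would handle the nilprogression case (all $H_{i,0}$ trivial), since that is cleaner and the coset case reduces to it. Here I set $N := \langle A_{\ge 1} \rangle$ and exhibit an explicit generating set: the finitely many group elements $e_{i,j} := \phi_{i,j}(1)$ (or, if $\phi_{i,j}$ is not determined by its value at $1$, a bounded generating set for $\phi_{i,j}(H_{i,j})$ — in the standard nilprogression model each $\phi_{i,j}$ is a homomorphism from an interval, so $e_{i,j}$ alone suffices). There are $O_{l,r_1,\dots,r_l}(1)$ such generators. To see that $N$ is nilpotent of bounded step, I argue that the lower central series of $N$ interleaves with the ``layer'' filtration: by \eqref{dokey} and \eqref{soop}, any commutator $[\phi_{i,j}(n),\phi_{i',j'}(n')]$ lies in the group generated by layers of index strictly greater than $\min(i,i')$ together with earlier pieces of layer $\min(i,i')$. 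Iterating, a commutator of length $k$ among the generators lands in $\langle A_{\ge i}\rangle$ where $i$ grows (weakly) with $k$; since there are only $l$ layers and each layer has only $r_i+1 \le \max_i r_i + 1$ sub-pieces, a commutator of length exceeding some explicit bound $C(l,r_1,\dots,r_l)$ is forced into $\langle A_{\ge l+1}\rangle = \{1\}$. Hence the lower central series terminates at step $C$, so $N = \langle A \rangle$ is nilpotent of step $O_{l,r_1,\dots,r_l}(1)$ and boundedly generated, which is the second assertion.

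For the coset case I build the normal subgroup $\mathnormal{N}$ by an induction that strips off the finite groups $H_{i,0}$ one layer at a time, or — more efficiently — I pass to the coset nilprogression $A'$ obtained by \emph{deleting} all the $\phi_{i,0}(H_{i,0})$ factors: this $A'$ is a genuine nilprogression (derived length $l$, same ranks $r_1,\dots,r_l$ minus the zeroth pieces), so by the case already done $\langle A' \rangle =: N$ is nilpotent of bounded step and boundedly generated. It remains to check (a) that $N$ is a subgroup of $\langle A \rangle$ of the claimed index, and (b) that $N$ is normalised by enough of $A$. For (a): every element of $A$ differs from an element of $A'$ by a bounded-length word in the finite sets $\phi_{i,0}(H_{i,0})$, whose total size is at most $\prod_i |H_{i,0}| \le \prod_{i,j}|H_{i,j}| = \mathrm{Vol}(A) \ll_{K,l} |A|$ (using $\mathrm{Vol}(A) \sim |A|$, which follows from Lemma \ref{nilpoly} comparing $|A| = |A^{\pm 1}|$ up to the stated polynomial factors — so the number of cosets of $N$ hit by $\langle A\rangle$ is at most $|A|^{O_{l,r_1,\dots,r_l}(1)}$, giving the index bound after replacing $N$ by the normal core, which only costs a further bounded power since $N$ has bounded index). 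For (b) I use \eqref{soop} again: conjugating a generator $\phi_{i,j}(1)$ of $N$ by one of the finite-group elements $\phi_{i',0}(h)$ keeps it inside $A_{\ge i+1}$ times earlier pieces, i.e. inside $N \cdot (\text{finite stuff})$; iterating shows $\langle A \rangle$ normalises $N$ up to a subgroup of bounded index, and intersecting over the bounded conjugation orbit produces the genuinely normal $N$ of index $O_{l,r_1,\dots,r_l}(|A|^{O_{l,r_1,\dots,r_l}(1)})$.

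The main obstacle I anticipate is bookkeeping rather than conceptual: verifying that the ``truncated'' sets $A_{\ge i}$ and the coset-free version $A'$ really satisfy Definition \ref{cosdef} in their own right — in particular that \eqref{okey} and \eqref{soop}, which reference $A_{\ge i+1}$, remain valid when one restricts attention to a sub-collection of layers, and that the error terms $A_{\ge i+1}\cdot \phi_{i,0}(H_{i,0})\cdots$ in \eqref{soop} do not leak outside the subgroup $N$ we are trying to control. This requires care because the $\phi_{i,j}$ are not assumed to be homomorphisms — \eqref{okey} only controls products modulo $A_{\ge i+1}$ — so one must consistently work ``up to bounded multiplicative error in lower layers'' and track how these errors accumulate under the commutator calculus. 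The cohomological subtlety flagged in Remark \ref{tota} is exactly what prevents us from doing better than bounded index in the coset case, but for the index bound as stated it suffices that all the finite groups together have size $\ll_{K,l} |A|$, which the volume normalisation $\mathrm{Vol}(A) \sim_{K,l} |A|$ supplies.
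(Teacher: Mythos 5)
Your treatment of the nilprogression case is in the right spirit: the paper also relies on the lexicographic filtration by layers $(i,j)$ and the closure properties \eqref{dokey}, \eqref{soop} to force iterated commutators into ever-deeper layers, although it packages this as an induction on $l$ with quotienting rather than a direct commutator-length count. (Do note that you would still need to argue that $\langle A_{\ge i}\rangle$ is normal in $\langle A\rangle$ to make the ``commutators of generators control commutators of group elements'' step legitimate, and that your generating set uses only $\phi_{i,j}(1)$ via iteration of \eqref{okey}, since the $\phi_{i,j}$ are not homomorphisms.)

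The coset case, however, has a genuine gap. Your plan is to delete every $\phi_{i,0}(H_{i,0})$ factor to produce a genuine nilprogression $A'$ and then set $N := \langle A'\rangle$. But the stripped object $A'$ is \emph{not} a coset nilprogression in the sense of Definition \ref{cosdef}: the closure conditions \eqref{okey}, \eqref{dokey}, \eqref{soop} all place products and commutators in $A_{\ge i+1}$, which contains the factors $\phi_{i',0}(H_{i',0})$ for $i' > i$. If you remove those factors, there is no reason the relevant products stay inside the remaining set, so $A'$ fails the definition and the already-proved nilprogression case does not apply to it. Even putting that aside, the subgroup $\langle A'\rangle$ has no a priori reason to be nilpotent or to have bounded index: the $H_{i,0}$'s sit as finite normal subgroups whose conjugation action by the higher-rank generators is nontrivial, and a subgroup that merely omits those finite sets from its generating family can still meet them through relations. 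The key algebraic step that makes the extension work — showing that a finite abelian extension of a boundedly generated nilpotent group of bounded step contains a nilpotent subgroup of polynomially bounded index — is exactly what the paper proves as Lemma \ref{nil-lem}, and your proposal has no substitute for it. The paper's route is different: it quotients $\langle A\rangle$ by the (genuine, normal, abelian) subgroup $V$ generated by the deepest layer $A_l$, observes that the image $\pi(A_{l-1}\cdots A_1)$ is a coset nilprogression of derived length $l-1$, applies the induction hypothesis there, lifts the nilpotent subgroup back in two stages (first through $V/H$, then through $H$), and invokes Lemma \ref{nil-lem} at the last step.

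A smaller but real error: you assert that $\operatorname{Vol}(A) \sim_{K,l} |A|$ ``follows from Lemma \ref{nilpoly}''. It does not — Lemma \ref{nilpoly} controls the growth $|A^{\pm n}|$, not the relationship between $\operatorname{Vol}(A) = \prod_{i,j}|H_{i,j}|$ and $|A|$, and that relationship is not a hypothesis of Lemma \ref{virtnil} and is false in general (the representation $A = \prod\phi_{i,j}(H_{i,j})$ can have many collisions). The paper sidesteps this entirely: the only cardinality input used is the trivial inclusion $H = \phi_{l,0}(H_{l,0}) \subset A$, giving $|H| \le |A|$, which is then fed into the $O(|H|^{O(1)})$ index loss from Lemma \ref{nil-lem} and compounded across the induction.
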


\begin{remark} Lemma \ref{virtnil} is not terribly surprising, in view of Lemma \ref{nilpoly}, and the close connection between polynomial growth and virtual nilpotency (see \cite{milnor}, \cite{wolf}, \cite{gromov}).   As mentioned in Remark \ref{tota}, one could conjecture a stronger result
than Lemma \ref{virtnil}, namely that $A$ is $O_{l,r_1,\ldots,r_l}(1)$-controlled by a set which becomes
a nilprogression of rank and step $O_{l,r_1,\ldots,r_l}(1)$ after quotienting out by a normal
subgroup.
\end{remark}

Combining Lemmas \ref{nilpoly}, \ref{virtnil} with Theorem \ref{frei-solv}, we conclude

\begin{corollary}[Freiman's lemma for solvable groups]\label{Freilem}  Let $(A,G)$ be a multiplicative set in a solvable group of derived length at most $l$, and a doubling constant of at most $K$.  Then there exists subgroups $N \leq H \leq G$, with $N$ nilpotent of step $O_{K,l}(1)$ and generated by $O_{K,l}(1)$ generators with index $O_{K,l}( |A|^{O_{K,l}(1)} )$ in $H$, such that $A$ is $O_{K,l}(1)$-controlled by a subset $A'$ of $H$.  Furthermore we have $A' \subset A^{\pm O_{K,l}(1)}$ and $|(A')^{\pm n}| \ll_{K,l} n^{O_{K,l}(1)} |A|$ for all $n \geq 1$.

If $G$ is totally torsion-free, one can take $H=N$.
\end{corollary}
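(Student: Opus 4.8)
The plan is to assemble the statement directly out of Theorem \ref{frei-solv}, Lemma \ref{virtnil}, and Lemma \ref{nilpoly}, with the only real work being to keep track of the implied constants. First I would apply Theorem \ref{frei-solv} to the multiplicative set $(A,G)$, which produces a coset nilprogression $(A',G)$ of derived length $l$, with all ranks $r_1,\ldots,r_l = O_{K,l}(1)$ and volume $\sim_{K,l} |A|$, which $O_{K,l}(1)$-controls $(A,G)$ and satisfies $A' \subset A^{\pm O_{K,l}(1)}$. From the definition of $K$-control we get in particular $|A'| \ll_{K,l} |A|$. I would then set $H := \langle A' \rangle$; trivially $A' \subset H$, and since $K$-control only requires the translating set $X$ to lie in the ambient group $G$, the set $A$ is still $O_{K,l}(1)$-controlled by the subset $A'$ of $H$, and $A' \subset A^{\pm O_{K,l}(1)}$.

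Next I would apply Lemma \ref{virtnil} to the coset nilprogression $(A',G)$. This yields a subgroup $N \leq \langle A' \rangle = H$ that is nilpotent of step $O_{l,r_1,\ldots,r_l}(1)$, is generated by $O_{l,r_1,\ldots,r_l}(1)$ elements, and has index $O_{l,r_1,\ldots,r_l}(|A'|^{O_{l,r_1,\ldots,r_l}(1)})$ in $H$; since the ranks are $O_{K,l}(1)$ and $|A'| \ll_{K,l} |A|$, the step and number of generators are $O_{K,l}(1)$ and the index is $O_{K,l}(|A|^{O_{K,l}(1)})$. Finally, Lemma \ref{nilpoly} applied to $(A',G)$ gives $|(A')^{\pm n}| \ll_{l,r_1,\ldots,r_l} n^{O_{l,r_1,\ldots,r_l}(1)} |A'|$, which, again using that the ranks are $O_{K,l}(1)$ and $|A'| \ll_{K,l} |A|$, becomes $|(A')^{\pm n}| \ll_{K,l} n^{O_{K,l}(1)} |A|$ for all $n \geq 1$. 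Collecting these facts gives all the asserted properties of $N \leq H \leq G$ and $A'$.

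For the totally torsion-free case, Theorem \ref{frei-solv} additionally allows us to take $A'$ to be a genuine nilprogression rather than a coset nilprogression; the second half of Lemma \ref{virtnil} then says that $H = \langle A' \rangle$ is itself nilpotent of step $O_{K,l}(1)$ and generated by $O_{K,l}(1)$ elements, so we may take $N = H$, giving the conclusion with $H = N$.

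I do not expect a genuine obstacle here: the corollary is a bookkeeping consequence of the three earlier results. The one point deserving attention is the propagation of constants---namely, observing that the ranks produced by Theorem \ref{frei-solv} are bounded purely in terms of $K$ and $l$, so that the constants of the shape $O_{l,r_1,\ldots,r_l}(1)$ appearing in Lemmas \ref{nilpoly} and \ref{virtnil} collapse to $O_{K,l}(1)$, and that $K$-control forces $|A'| \ll_{K,l} |A|$ so that the index bound and the polynomial growth bound can be expressed with $|A|$ in place of $|A'|$ on the right-hand side.
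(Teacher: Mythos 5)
Your proposal is correct and matches the paper's intended argument exactly: the paper gives no separate proof, stating only that the corollary follows by ``Combining Lemmas \ref{nilpoly}, \ref{virtnil} with Theorem \ref{frei-solv},'' and your write-up simply makes the bookkeeping explicit, including the key observation that the ranks from Theorem \ref{frei-solv} are $O_{K,l}(1)$ so the $O_{l,r_1,\ldots,r_l}(1)$ constants collapse, and that $|A'|\ll_{K,l}|A|$ (from the control or the volume bound) lets $|A'|$ be replaced by $|A|$ in the index and growth bounds.
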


This theorem asserts, roughly speaking, that any set of small doubling in a solvable group is controlled by a subset of a virtually nilpotent group of bounded rank.  It is analogous to Freiman's lemma\cite{freiman} that an additive set in a torsion-free abelian group of small doubling is contained in a subgroup generated by a bounded number of generators. Again, one could conjecture that stronger results hold.

In a similar spirit, we have the following result, proven in Section \ref{applied}:

\begin{theorem}[Milnor-Wolf type theorem]\label{mwt}  Let $G$ be a solvable group of derived length at most $l$, let $S$ be a finite set of generators for $G$.  For any $r > 0$, let $B_S(r) := S^{\pm \lfloor r\rfloor}$ be the ball of radius $r$.  Suppose that $|B_S(R)| \leq R^d$ for some $d > 0$ and $R > 1$. If $R$ is sufficiently large depending on $l$, $d$, then $G$ is virtually nilpotent.  More specifically, $G$ contains a nilpotent subgroup $N$ of step $O_{l,d}(1)$ and index $O(R^{O_{l,d}(1)})$.  Furthermore, we have the bound
$$ |B_S(r)| \ll_{l,d} r^{O_{l,d}(1)}$$
for all $r \geq R$.
\end{theorem}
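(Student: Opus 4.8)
The plan is to pass, by a pigeonholing argument, to a dyadic scale at which the relevant ball has bounded doubling, to apply Theorem~\ref{frei-solv} there, and then to \emph{globalise} --- i.e.\ turn the structural statement about one ball into one about all of $G$. For the first step: using monotonicity of $r\mapsto B_S(r)$ and the telescoping identity $|B_S(2^m)|=|B_S(1)|\prod_{k=0}^{m-1}|B_S(2^{k+1})|/|B_S(2^k)|$ with $m:=\lfloor\log_2 R\rfloor$ (so $2^m\le R$ and $|B_S(2^m)|\le R^d$), if every ratio on the right exceeded a suitable constant $C=C(d)$ then for $R$ large we would get $R^d\ge C^m\gg R^d$, a contradiction; so some $k$ has $2^k\le R$ and $|B_S(2^{k+1})|\le C\,|B_S(2^k)|$. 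Since $B_S(2^{k+1})=B_S(2^k)\cdot B_S(2^k)$, the multiplicative set $A:=B_S(2^k)$ has doubling constant $O_d(1)$; moreover $S\subseteq A$, so $\langle A\rangle=G$, and $|A|\le|B_S(R)|\le R^d$.

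Applying Theorem~\ref{frei-solv} to $(A,G)$ produces a coset nilprogression $(A',G)$ of derived length $l$, all ranks $O_{d,l}(1)$, volume $\sim_{d,l}|A|$, which $O_{d,l}(1)$-controls $A$ and satisfies $A'\subseteq A^{\pm O_{d,l}(1)}$; from $A\subseteq X\cdot A'$ with $|X|=O_{d,l}(1)$ one also gets $|A'|\sim_{d,l}|A|$. By Lemma~\ref{virtnil}, $H:=\langle A'\rangle$ contains a subgroup $N$ which is nilpotent of step $O_{d,l}(1)$, generated by $O_{d,l}(1)$ elements, and of index $O_{d,l}(|A'|^{O_{d,l}(1)})=O_{d,l}(R^{O_{d,l}(1)})$ in $H$; and by Lemma~\ref{nilpoly}, $|(A')^{\pm n}|\ll_{d,l}n^{O_{d,l}(1)}|A|$ for all $n\ge1$.

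The crux --- and the step I expect to be hardest --- is to show that $H=\langle A'\rangle$ has index $O_{d,l}(1)$ in $G$; granting this, $N\le H\le G$ is the asserted nilpotent subgroup, of index $[G:H][H:N]=O_{d,l}(R^{O_{d,l}(1)})$. This is precisely the Milnor--Wolf phenomenon, and it uses both hypotheses essentially: $G$ is solvable, and (for $R$ past the threshold) $|B_S(R)|\le R^d$ forces $G$ to have sub-exponential, hence polynomial, growth --- whereas for general groups the coset (nil)progression controlling a small-doubling set need not generate a finite-index subgroup (e.g.\ $\{s,t\}$ in a free group, which however has exponential growth and so is excluded). I would establish the finite-index claim by induction on the derived length $l$: the base case $l=1$ is Theorem~\ref{gr}, which exhibits $G=\langle A'\rangle$ as finitely generated abelian of rank $O_d(1)$ and torsion $O(R^{O_d(1)})$, hence virtually $\Z^{O_d(1)}$ with the required index bound; the inductive step reduces modulo a normal abelian term of the derived series, whose quotient has smaller derived length and inherits the ball bound (with a slightly worse exponent), leaving one to show that an extension $1\to(\text{abelian})\to G_1\to(\text{nilpotent of step }O_{d,l}(1))\to 1$ of polynomial growth is virtually nilpotent with index $O_{d,l}(R^{O_{d,l}(1)})$. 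The delicate points are (i) an effective version of Milnor's lemma that $G^{(1)}$ is finitely generated, with generators in a ball $B_S(O_{d,l}(\log R))$, so that the inductive hypothesis can be applied at a scale like $R^{1/2}$; and (ii) an effective form of the classical statement that polynomial growth forces the conjugation action on the abelian section to be virtually unipotent --- here it is essential that Theorem~\ref{frei-solv} and Lemma~\ref{virtnil} give \emph{bounded} ranks for the nilpotent part, since this keeps the dimensions of the relevant abelian sections (and hence the indices needed to make the action unipotent) of size $O_{d,l}(R^{O_{d,l}(1)})$ rather than exponential in $R$. (Alternatively, one could try to show directly, using Ruzsa covering and the layered structure of Definition~\ref{cosdef}, that $H$ is commensurated by $G$ with uniformly bounded indices, and then pass to its normal core; the work seems comparable.)

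Finally, for the growth bound: given the nilpotent $N\le G$ of step and generator-count $O_{d,l}(1)$ and index $I:=[G:N]=O_{d,l}(R^{O_{d,l}(1)})$, write $G=\bigcup_{i=1}^I Ng_i$ with $g_i\in B_S(O(I))$ (Schreier), whence $|B_S(r)|\le I\cdot|N\cap B_S(r+O(I))|$ for $r\ge1$. Since $N$ has finite index $I$ in $G$, its word metric is bi-Lipschitz equivalent to the restriction of the word metric of $G$, with constants $O_{d,l}(R^{O_{d,l}(1)})$, and a finitely generated nilpotent group of step $O_{d,l}(1)$ on $O_{d,l}(1)$ generators has polynomial growth of degree $O_{d,l}(1)$; hence $|N\cap B_S(\rho)|\ll_{d,l}(R^{O_{d,l}(1)}\rho)^{O_{d,l}(1)}$. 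Taking $\rho=r+O(I)$ gives $|B_S(r)|\ll_{d,l}R^{O_{d,l}(1)}r^{O_{d,l}(1)}$, which for $r\ge R$ is $\ll_{d,l}r^{O_{d,l}(1)}$, as required.
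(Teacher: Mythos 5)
Your reduction to the finite-index claim is correct, and you are right that this is the crux; but this is precisely where your proposal stops being a proof and becomes a sketch, and the route you sketch is both much heavier than needed and different from the paper's.

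After applying Theorem~\ref{frei-solv} at a dyadic scale $r_0$ with bounded doubling and Lemma~\ref{virtnil}, the paper does \emph{not} re-derive an effective Milnor--Wolf by induction on derived length. Instead it uses a short, elementary covering/pigeonhole argument that you have not found. Briefly: from $O_K(1)$-control, $B_S(r_0)\subset X\cdot A$ with $|X|=O(1)$; by repeatedly merging any two translates $x\cdot A^{\pm 10n}$, $x'\cdot A^{\pm 10n}$ that overlap, one passes (in $O(1)$ steps, since $|X|=O(1)$) to a cover $B_S(r_0)\subset X'\cdot A^{\pm n}$ with $n=O(1)$ and the translates $x\cdot A^{\pm 10n}$, $x\in X'$, pairwise disjoint. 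Setting $X'_r:=\{x\in X': x\cdot A^{\pm n}\cap B_S(r)\neq\emptyset\}$, these sets increase with $r$ inside the $O(1)$-element set $X'$, so some $X'_{r_1}=X'_{r_1+1}$. The disjointness of the $x\cdot A^{\pm 10n}$ forces, for each $s\in S$ and $x\in X'_{r_1}$, a unique $x'\in X'_{r_1}$ with $sx\in x'\cdot A^{\pm O(1)}$, giving $S\cdot X'_{r_1}\subset X'_{r_1}\cdot A^{\pm O(1)}$; iterating and using that $S$ generates $G$ yields $G=X'_{r_1}\cdot\langle A\rangle$, i.e.\ $\langle A\rangle$ has index $O(1)$. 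This is the key idea that replaces your proposed induction, and it is qualitatively simpler: no effective Milnor lemma, no effective unipotency of the conjugation action, no re-proving polynomial growth transfers through abelian extensions.

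Two further concrete issues with your sketch. First, your claims in the inductive step --- that $G^{(1)}$ is generated inside a ball of radius $O_{d,l}(\log R)$, and that the quotient by an abelian derived-series term ``inherits the ball bound with a slightly worse exponent'' --- are stated but not established, and neither is routine at the required level of uniformity; your own text flags them as ``delicate points'', which is to say they remain unproven. Second, your final growth bound invokes a uniform Bass--Guivarc'h-type estimate for abstract finitely generated nilpotent groups of bounded step and generator count; this is true but is an external input, whereas the paper's argument gets the growth bound internally by iterating $S\cdot X'_{r_1}\subset X'_{r_1}\cdot A^{\pm O(1)}$ to deduce $B_S(r)\subset B_S(R)\cdot A^{\pm O(r)}\cdot B_S(R)$ and then applying Lemma~\ref{nilpoly} directly to the nilprogression $A$. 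In short: your setup is sound and your identification of the hard step is correct, but the hard step is left unresolved, and you are missing the paper's separated-translates stabilization argument that disposes of it cleanly.
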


This strengthens a classical theorem of Milnor\cite{milnor} and Wolf\cite{wolf}, which asserts that any solvable group of polynomial growth must be virtually nilpotent; the above result gives a more quantitative version of that statement, which only requires polynomial growth at a single (large) scale, and gives some control on the nature of the virtual nilpotency.  As one consequence of this stronger statement, we see that balls in solvable groups cannot transition from polynomial growth to non-polynomial growth at large scales\footnote{Of course, the reverse transition from non-polynomial growth to polynomial growth is easy to attain.  Consider for instance a large finite group generated by a set whose Cayley graph has large girth; balls in this group grow exponentially at first, but are ultimately constant.}.  In view of Gromov's celebrated theorem\cite{gromov} that extends the Milnor-Wolf theorem to arbitrary finitely generated groups of polynomial growth, it seems of interest to ask whether there is an analogue of Theorem \ref{mwt} for groups that are not necessarily solvable\footnote{Note added in proof: this question has now solved affirmatively; see \cite{shalom}.}.  (It may also be that Corollary \ref{Freilem} can similarly be generalised to non-solvable groups.)

Finally, we mention two auxiliary results, which were established in this paper in order to prove the above theorems, but which may have some independent interest.  In Proposition \ref{sarkozy} we establish a variant of S\'ark\H{o}zy's theorem, which roughly speaking will state that if an additive set is densely contained inside a coset progression, then some iterated sumset of that set will itself contain a large coset progression.  In Proposition \ref{bz-prop2} we show that every multiplicative set $A$ of bounded doubling controls a large set whose iterated sumset is mostly contained in $A \cdot A^{-1}$; the point here is that the number of iterations can be taken to be rather large\footnote{Note added in proof: this result has since been significantly strengthened, see \cite{croot}, \cite{sanders-2}.}.

\subsection{Connections with other non-abelian Freiman theorems}

The work here complements some recent work in \cite{sanders}, \cite{bg}, \cite{bg2}, \cite{fisher}.  For instance, the Freiman-type theorems here essentially reduce the study of sets of small doubling in the (virtually) solvable case to the (virtually) nilpotent case.  The latter case is then studied further in \cite{bg}, \cite{fisher}.  For instance, in \cite{fisher} it was shown (by many applications of the Baker-Campbell-Hausdorff formula) that if $A$ was a multiplicative set of small doubling inside a simply connected nilpotent Lie group $G$ of bounded step, then the logarithm of $A$ (which can be viewed as an additive set in the Lie algebra of $G$) had small (additive) doubling and could thus be controlled by the additive Freiman theorem.  This analysis was then taken further in \cite{bg}, in which it was shown that if $A$ was a multiplicative set of small doubling inside a torsion-free nilpotent group of bounded step, then $A$ was controlled by a \emph{nilpotent progression}, which is a special case of the coset nilprogressions studied here, but with significantly more structure.  More precisely: the torsion components $H_{i,0}$ are trivial, the $\phi_{i,j}$ are homomorphisms $\phi_{i,j}(n) := e_{i,j}^n$, the generators $e_{i,j}$ are formed as commutators of a list $e_1,\ldots,e_k$ of base generators (ordered in a standard fashion), and the lengths $N_{i,j}$ are products of base lengths $N_1,\ldots,N_k$, where the exponents correspond to the weight of each base generator in the commutator $e_{i,j}$; see \cite{bg} for further discussion.  These results combine well with our results in the case when $G$ is totally torsion-free, in which case they assert that any multiplicative set in $G$ of bounded doubling is controlled by a nilpotent progression.  The situation seems to be more complicated however in the presence of torsion, as this seems to generate some non-trivial ``cohomology''.

In a somewhat different direction, the work of \cite{sanders} relates sets of polynomial growth (thus $|A^n| \leq n^d |A|$ for all large $n$) with balls in a translation-invariant pseudo-metric, in the case when $G$ is a \emph{monomial group} (a concept which overlaps with, but is not entirely equivalent to, that of a solvable group).  Note that the nilpotent progressions studied in \cite{bg} are essentially balls of this type.  This provides part of an alternate path to non-abelian Freiman theorems, although the precise connection between small doubling and polynomial growth is not well understood yet, except in the abelian (and nilpotent) settings, where one can pass from one to the other fairly easily.

The results of \cite{bg} have been extended very recently to linear solvable groups in \cite{bg2} (private communication).

\subsection{Notation}\label{notation-sec}

We use the asymptotic notation $X \ll Y$, $Y \gg X$, or $X = O(Y)$ to denote the estimate $|X| \leq CY$ for some absolute constant $C$, and $X \sim Y$ for $X \ll Y \ll X$.  We will often need the implied constant $C$ to depend on one or more parameters, in which case we shall indicate this by subscripts unless otherwise noted, thus for instance $X \ll_K Y$ means that $|X| \leq C_K Y$ for some quantity $C_K$ depending only on $K$.

We combine the above asymptotic notation with the sumset and product set notation, e.g. $O_K(1) A$ denotes an iterated sumset $kA = A + \ldots + A$ for some $k = O_K(1)$, etc.

Because we wish to reserve $A^k$ for the $k$-fold product set $A \cdot \ldots \cdot A$ of a set $A$, we will use the notation $A^{\otimes k}$ to denote the $k$-fold Cartesian product $A \times \ldots \times A$.

Given a subset $A$ of a group $G$, we use $\langle A \rangle$ to denote the group generated by $A$.

Let $G = (G,\cdot)$ be a group.  The commutator $[g,h]$ of two group elements $g,h \in G$ is defined as $[g,h] := ghg^{-1}h^{-1}$.  The commutator $[H,K]$ of two subgroups $H,K \leq G$ is defined as the group generated by the commutators $[h,k]$ for $h \in H, k \in K$.  The \emph{lower central series} $G = G_1 \geq G_2 \geq \ldots$ of a group $G$ is defined recursively by $G_1 := G$ and $G_{i+1} := [G,G_i]$; a group $G$ is \emph{nilpotent} of step at most $s$ if $G_{s+1}$ is trivial.  A group is \emph{virtually nilpotent} if it has a nilpotent subgroup of finite index.  

\subsection{Acknowledgements}

The author is indebted to Ben Green and Tom Sanders for valuable discussions, particularly in relation to the recent preprints \cite{bg}, \cite{sanders}.  The author also thanks the referee for suggestions, and David Speyer for corrections.  The author is supported by a grant from the MacArthur Foundation, and by NSF grant DMS-0649473. 

\section{The lamplighter group}\label{lamp-sec}

We now prove Theorem \ref{frei-lamp}, which is a much simpler result than Theorem \ref{frei-solv} but already contains some of the key ideas, most notably a focus on analysing group actions of a multiplicative set $A$ on an additive set $E$ which have small ``doubling constant'' in some sense.

In view of Lemma \ref{small}(i) (and the transitivity property mentioned in Remark \ref{control}), we may assume without loss of generality that $A$ is a $K$-approximate group (see Definition \ref{approx}), rather than merely a set of small doubling.  We allow all implied constants to depend on $K$, and henceforth omit the dependence of $K$ in the subscripts.

Let $\pi: \Z \ltimes \F_2^\Z \to \Z$ be the canonical projection, thus $\ker(\pi) \equiv \F_2^\Z$ is abelian.  By Lemma \ref{proj}(i), $\pi(A) \subset \Z$ is also a $K$-approximate group.  By Theorem \ref{gr2} (or from the results in \cite{ruzsa} or \cite{chang}), $\pi(A)$ is controlled by a generalised arithmetic progression $P \subset \pi(A^4)$ of rank $O(1)$.  In particular, $|\pi(A)| \sim |P|$.

Let $C \geq 1$ be a large constant depending on $K$ to be chosen later.  By dropping all small dimensions of $P$, we may assume that all dimensions $N_1,\ldots,N_r$ of $P$ are at least $C$.  We may also of course assume that the generators $v_1,\ldots,v_r$ of $P$ are non-zero.

By Lemma \ref{proj}(iii), $(\ker(\pi) \cap A^{100})^3 \subset \F_2^\Z$ is a $O(1)$-approximate group of cardinality $\sim |A|/|P|$; by Theorem \ref{gr} (or Ruzsa's Freiman theorem \cite{ruzsa-group}) we can thus find a subspace $W$ of $\F_2^\Z$ containing $(\ker(\pi) \cap A^{100})^3$ of cardinality $\sim |A|/|P|$.  

Consider the set $E := \ker(\pi) \cap A^{20}$.  Clearly $E \subset W$. From Lemma \ref{proj}(ii), we have $|E| \sim |W|$.  Also, if $v \in P$, we see on conjugating $E$ by an element of $A^4 \cap \pi^{-1}(\{v\})$ that $T^{v}(E) \subset \ker(\pi) \cap A^{28} \subset V$.  Thus for $0 \leq n \leq C$, the linear space 
$$W_n := \operatorname{span}( \bigcup_{-n \leq a_1,\ldots,a_r \leq n} T^{a_1 v_1 + \ldots + a_r v_r}(E) )$$ 
is a subspace of $V$ of cardinality $\sim |W|$.  These spaces are also non-decreasing in $n$.  By the pigeonhole principle (and the fact that finite-dimensional subspaces of $W$ have cardinality equal to a power of $2$), we thus conclude (if $C$ is large enough) that there exists $0 \leq n < C$ such that $W_n = W_{n+1}$. Setting $V := W_n$, we conclude that $|V| \sim |W| \sim |A|/|P|$, that $V$ contains $\ker(\pi) \cap A^{20}$, and $V$ is invariant with respect to $T^{v_i}$ for $i=1,\ldots,r$.

There are two cases.  If $r=0$, then $|V| \sim |A|$, and from Lemma \ref{small}(iii) we see that $V$ $O(1)$-controls $A^2$ and hence $O(1)$-controls $A$, and we are in Case 1.  Now suppose instead that $r > 0$.  Setting $d \geq 1$ to be the greatest common divisor of the $v_1,\ldots,v_r$, we conclude that $V$ is $T^d$-invariant.  Also we have $P \subset d\Z$.

Now consider the symmetric set $B := V \cdot (\pi^{-1}(P) \cap A^4)$.  On the one hand, from Lemma \ref{proj}(ii) we have $\pi(B) = P$.  
On the other hand, since $V$ contains $\ker(\pi) \cap A^{20}$ and is invariant under any shift arising from $P$, we see that $\ker(\pi) \cap B^4 = V$.  
We conclude that $B$ takes the form 
$$ B = \bigcup_{n \in P} (n,\phi(n)) + V$$
for some function $\phi: P \to \F_2^\Z$ whose graph $n \mapsto (n,\phi(n))$ is a Freiman isomorphism modulo $V$ in the sense that $(n_1,\phi(n_1)) \cdot (n_2,\phi(n_2)) = (n_3,\phi(n_3)) \cdot (n_4,\phi(n_4)) \hbox{ mod } V$ whenever $n_1,n_2,n_3,n_4 \in P$ are such that $n_1+n_2=n_3+n_4$.    This makes $B$ into a $O(1)$-approximate group of size $|P| |V| \sim |A|$.  We thus see that $B$ $O(1)$-controls $\pi^{-1}(P) \cap A^4$, which (by Lemma \ref{small}(iii)) $O(1)$-controls $A^4$, which $O(1)$-controls $A$, and we are in Case 2 as desired.

Finally, if $A$ was initially in $\Z \ltimes (\F_2^\Z)_0$ instead of $\Z \ltimes \F_2^\Z$, then we can repeat the above arguments with $\F_2^\Z$ replaced by $(\F_2^\Z)_0$ throughout.  Since $(\F_2^\Z)_0$ does not contain any non-trivial finite-dimensional $T^d$-invariant subspaces for any $d \geq 1$, we obtain the final claims in Theorem \ref{frei-lamp}.

\section{Properties of coset nilprogressions}\label{cosnil}

We now prove a number of basic facts about coset nilprogressions stated in the introduction, including Lemma \ref{nilpoly} and Lemma \ref{virtnil} from the introduction.  All the properties here are rather trivially true for coset progressions, and the generalisation to coset nilprogressions is relatively routine, requiring mostly a certain amount of ``nilpotent algebra'' bookkeeping.

It is convenient to introduce the following notion.

\begin{definition}[Dilation]  Let 
$$ A = \prod_{i=l}^1 \prod_{j=0}^{r_i} \phi_{i,j}(H_{i,j})$$
be a coset nilprogression of derived length $l$ and ranks $r_1,\ldots,r_l$.  For any tuple $M = (M_{i,j})_{1 \leq i \leq l; 1 \leq j \leq r_i}$ of nonnegative integers, we define the dilate $A^M$ of $A$ to be the set
$$ A^M = \prod_{i=l}^1 \prod_{j=0}^{r_i} \phi_{i,j}(H_{i,j})^{M_{i,j}}$$
with the convention that $M_{i,0}$ is always equal to $1$.
\end{definition}

We have the following basic estimates:

\begin{lemma}[Multiplication laws]\label{mult}  Let $A = (A,G)$ be a coset nilprogression of derived length $l$ and ranks $r_1,\ldots,r_l$, and let $M = (M_{i,j})_{1 \leq i \leq l; 1 \leq j \leq r_i}$, $M' = (M'_{i,j})_{1 \leq i \leq l; 1 \leq j \leq r_i}$ be tuples of nonnegative integers.  Then we have
\begin{equation}\label{amm}
 A^M \cdot A^{M'} \subset A^{\tilde M + \tilde M'}
 \end{equation}
where $\tilde M = (M_{i,j})_{1 \leq i \leq l; 1 \leq j \leq r_i}$ is of the form
$$ \tilde M_{i,j} = M_{i,j} + O_{l,r_1,\ldots,r_l}( \sum_{j < j' \leq r_i} M_{i,j'} + \sum_{1 \leq i' < i} \sum_{0 \leq j' \leq r_{i'}} M_{i',j'} )^{O_{l,r_1,\ldots,r_l}(1)}$$
(again adopting the convention $M_{i,0}=1$) and similarly for $\tilde M'$.  In a similar spirit, we have
\begin{equation}\label{ami}
 (A^M)^{-1} \subset A^{\tilde M}.
\end{equation}
\end{lemma}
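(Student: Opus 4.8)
The plan is to realise a typical element of a dilate $A^M$ as a word in the ``atoms'' $\phi_{i,j}(n)$, $n\in H_{i,j}$ --- a word containing exactly $M_{i,j}$ atoms of each type $(i,j)$ (with the convention $M_{i,0}=1$), arranged so that the types occur in the left-to-right order prescribed by the normal form $\prod_{i=l}^1\prod_{j=0}^{r_i}$ --- and then to run a collection process in the spirit of P.~Hall. Order the types by declaring $(i,j)\prec(i',j')$ when $(i,j)$ lies to the \emph{right} of $(i',j')$ in the normal form, i.e.\ when $i<i'$, or $i=i'$ and $j>j'$; thus a normal word lists its atoms in $\prec$-decreasing order of type. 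The point of the axioms is that \eqref{dokey} and \eqref{soop} --- together with the identity $\phi_{i,j}(n)^{-1}\in\phi_{i,j}(-n)\cdot A_{\geq i+1}$, which follows from \eqref{ioi}, \eqref{okey} and the symmetry of $H_{i,j}$ --- say precisely that the commutator of any two atoms equals a word in boundedly many atoms, each of type strictly $\prec$-larger than the types of the two atoms involved; equivalently, a coset nilprogression is a ``partial normal form'' presentation of exactly the kind to which collection applies. We also use \eqref{okey} with $j=0$ to maintain a single $\phi_{i,0}(H_{i,0})$-factor per level throughout, which is legitimate because $H_{i,0}$ is a group. The whole argument runs by induction on the derived length $l$, the case $l=0$ (where $A_{\geq 1}=\{1\}$) being trivial.

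For \eqref{amm}: an element of $A^M\cdot A^{M'}$ is a word with $M_{i,j}+M'_{i,j}$ atoms of each type $(i,j)$, arranged as one normal word followed by another. Run collection: repeatedly replace an adjacent out-of-order pair $\phi_{i',j'}(n')\phi_{i,j}(n)$ by $[\phi_{i',j'}(n'),\phi_{i,j}(n)]\,\phi_{i,j}(n)\,\phi_{i',j'}(n')$ and absorb the commutator, a bounded word in $\prec$-larger atoms. This terminates, since every correction moves strictly upward in $\prec$ and iterated corrections of depth exceeding $O_{l,r_1,\ldots,r_l}(1)$ must be trivial (the innermost commutator then lands in $A_{\geq l+1}=\{1\}$). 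The standard quantitative analysis of such a process shows that the number of atoms of type $(i,j)$ in the final normal word exceeds its initial value $M_{i,j}+M'_{i,j}$ by at most $\bigl(\sum_{j<j'\le r_i}(M_{i,j'}+M'_{i,j'})+\sum_{i'<i}\sum_{j'}(M_{i',j'}+M'_{i',j'})\bigr)^{O_{l,r_1,\ldots,r_l}(1)}$, since the types that can feed into $(i,j)$ are exactly those $\prec$-smaller than it, namely $(i,j')$ with $j'>j$ and $(i',j')$ with $i'<i$. Splitting this error between the two factors via $(x+y)^D\ll_D x^D+y^D$ gives $A^M\cdot A^{M'}\subset A^{\tilde M+\tilde M'}$ with $\tilde M,\tilde M'$ of the asserted shape.

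For \eqref{ami}: write $(A^M)^{-1}$ as the reverse-order product of the sets $\phi_{i,j}(H_{i,j})^{-1}$. By \eqref{ioi}, \eqref{okey} each such set lies in $\phi_{i,j}(H_{i,j})\cdot(A_{\geq i+1})^{-1}$, and $(A_{\geq i+1})^{-1}$ is contained in a bounded dilate of the coset nilprogression $A_{\geq i+1}$ --- this is \eqref{ami} applied to the shorter coset nilprogression $A_{\geq i+1}$, which is the inductive step. Hence $(A^M)^{-1}$ is a word with $M_{i,j}+O(\sum_{i'<i}\sum_{j'}M_{i',j'})$ atoms of type $(i,j)$, in some order; feeding it into the collection process above yields $(A^M)^{-1}\subset A^{\tilde M}$, the extra linear-in-$M$ terms being dominated by the polynomial terms already present.

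The one step with real content is the quantitative estimate inside the collection process --- showing the atom counts grow only polynomially and pinning down which initial counts affect which final count. What makes it work is precisely the structure of \eqref{dokey} and \eqref{soop}: every correction is strictly $\prec$-larger than the atoms producing it, so the process has bounded depth $O_{l,r_1,\ldots,r_l}(1)$ and each layer inflates the relevant exponents by only a bounded power rather than compounding. Carrying this bookkeeping out uniformly in $l,r_1,\ldots,r_l$ is routine ``nilpotent algebra'', but one must be careful to collect by type in the right order, rather than one atom or one block at a time, which would turn the bounded depth into an exponential loss.
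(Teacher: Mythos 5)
Your proposal is correct and takes essentially the same approach as the paper: both move factors into normal form using \eqref{okey}, \eqref{dokey}, \eqref{soop}, with every commutator correction landing at strictly $\prec$-larger types, so the derived-length bound caps the depth of the process and the exponent inflation stays polynomial. The paper packages this as a double induction on $l$ and then on $r_1$, peeling off the rightmost factor of $A^M$, while you present it directly as a Hall-style collection on the word of atoms; this is the same underlying mechanism (the paper's own footnote notes the ``nilpotent collection'' flavor), and your treatment of \eqref{ami} via \eqref{okey} with $n'=-n$ followed by \eqref{amm} is precisely the paper's.
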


\begin{proof} We begin with \eqref{amm}.  The claim is vacuously true for $l=0$, so suppose inductively\footnote{The reader may wish to work out some small cases by hand, e.g. $l=2$, $r_1=r_2=1$, to get a sense of what is going on here.  The arguments here are similar to those that establish that a group generated by a set $S$ is nilpotent of step $s$ if all $s+1$-fold iterated commutators of the generators vanish; there is a very similar ``nilpotent'' or ``upper triangular'' nature to the algebra involved here.} that $l \geq 1$ and the claim has already been proven for $l-1$.

Next, we induct on $r_1$ (keeping $l$ fixed).  If $r_1=0$, then we can write $A^M = (A_{\geq 2})^{\overline{M}} \cdot \phi_{1,0}(H_{1,0})$, where $A_{\geq 2}$ is a coset progression of derived length $l-1$ and ranks $r_2,\ldots,r_l$, and $\overline{M}$ is the truncation of $M$ formed by omitting the (empty) $i=1$ component of $M$ and then relabeling.  Thus
$$ A^M \cdot A^{M'} = (A_{\geq 2})^{\overline{M}} \cdot \phi_{1,0}(H_{1,0}) \cdot (A_{\geq 2})^{\overline{M'}} \cdot \phi_{1,0}(H_{1,0}).$$
Let $a \in \phi_{1,0}(H_{1,0})$, then from \eqref{soop} we have
$$ a \cdot \phi_{i,j}(H_{i,j}) \cdot a^{-1} \subset A_{\geq i+1} \cdot \phi_{i,0}(H_{i,0}) \cdot \ldots \cdot \phi_{i,j}(H_{i,j})$$
for all $2 \leq i \leq l$ and $1 \leq j \leq r_i$, and similarly
$$ a \cdot \phi_{i,j}(H_{i,0}) \cdot a^{-1} \subset A_{\geq i+1} \cdot \phi_{i,0}(H_{i,0}) \cdot \phi_{i,0}(H_{i,0}).$$
On the other hand, from \eqref{okey} one has
\begin{equation}\label{io}
 \phi_{i,0}(H_{i,0}) \cdot \phi_{i,0}(H_{i,0}) \subset A_{\geq i+1} \cdot \phi_{i,0}(H_{i,0}).
 \end{equation}
By repeated application of the induction hypothesis, we thus conclude that
$$ a \cdot (A_{\geq 2})^{\overline{M'}} \cdot a^{-1} \subset (A_{\geq 2})^{\overline{\tilde M'}} $$
if the constants used in the definition of $\tilde M'$ are chosen appropriately.  We conclude that
$$ \phi_{i,0}(H_{i,0}) \cdot (A_{\geq 2})^{\overline{M'}} \subset (A_{\geq 2})^{\overline{\tilde M'}} \cdot \phi_{i,0}(H_{i,0})$$
and hence
$$ A^M \cdot A^{M'} = (A_{\geq 2})^{\overline{M}} \cdot (A_{\geq 2})^{\overline{M'}} \cdot \phi_{1,0}(H_{1,0}) \cdot \phi_{1,0}(H_{1,0}).$$
Using \eqref{io} and the induction hypothesis again, we obtain the claim (after modifying the definition of $M'$ slightly).

The case $r_1 > 0$ can be recovered from the inductively preceding case $r_1-1$ by a very similar argument which we omit (the only real differences are that one uses \eqref{dokey} as well as \eqref{soop}, and we no longer have \eqref{io}, and so must allow the factors of $\phi_{i,r_i}(H_{i,r_i})$ to accumulate).

To prove \eqref{ami}, observe from \eqref{okey} (setting $n' := -n$) that
$$ \phi_{i,j}(H_{i,j})^{-1} \subset A_{\geq i+1}^{-1} \cdot \phi_{i,j}(H_{i,j})$$
for all $1 \leq i \leq l$ and $0 \leq j \leq r_i$.  Iterating this repeatedly and then using \eqref{ami} we obtain the claim.
\end{proof}

From the above lemma and a routine induction argument we see that
\begin{equation}\label{sock}
A^{\pm k} \subset A^{(k^{c_{i,j}})_{1 \leq i \leq l; 1 \leq j \leq r_i}}
\end{equation}
for all $k \geq 1$ and for suitable choices of constants $c_{i,j}$ depending only on $l,r_1,\ldots,r_l$ (one wants $c_{i,j}$ to be decreasing in $j$, and very rapidly increasing in $i$).

Now we cover $A^M$ by $A$.

\begin{lemma}[Covering properties of coset nilprogressions]\label{control-o} Let $(A,G)$ be a coset nilprogression of derived length $l$ and ranks $r_1,\ldots,r_l$, and let $M = (M_{i,j})_{1 \leq i \leq l, 1 \leq j \leq r_i}$ be a tuple of non-negative integers.  Then $A^M$ is $O_{l,r_1,\ldots,r_l}( 1 + \sum_{i=1}^l \sum_{j=1}^{r_i} M_{i,j} )^{O_{l,r_1,\ldots,r_l}(1)}$-controlled by $A$.
\end{lemma}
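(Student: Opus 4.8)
The plan is to run an induction analogous to the one used for Lemma \ref{mult}, covering $A^M$ by boundedly many translates of $A$ by exploiting the ``upper triangular'' structure of the coset nilprogression. First I would dispose of the case $l=0$ (trivially $A^M = \{1\} = A$) and then induct on $l$, and within each $l$ induct on $r_l$ (or on the pair $(l, r_1)$, depending on which recursion from Lemma \ref{mult} one wishes to mirror). The key algebraic input is the following ``one-generator covering'' fact: for a single factor, $\phi_{i,j}(H_{i,j})^{M_{i,j}}$ can be covered by $O_{l,\vec r}(1 + M_{i,j})$ translates of $\phi_{i,j}(H_{i,j})$ modulo the ``lower'' part $A_{\geq i+1}\cdot \phi_{i,0}(H_{i,0}) \cdots \phi_{i,j-1}(H_{i,j-1})$. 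For $j \geq 1$ this is because $H_{i,j} = \{-N_{i,j},\ldots,N_{i,j}\}$ is an interval, so $H_{i,j}^{(M_{i,j})}$ (the $M_{i,j}$-fold sumset) is the interval $\{-M_{i,j}N_{i,j},\ldots,M_{i,j}N_{i,j}\}$, which splits into $O(M_{i,j})$ translates of $H_{i,j}$; applying $\phi_{i,j}$ and using \eqref{okey} repeatedly to collect the ``error'' terms into $A_{\geq i+1}$, one gets $\phi_{i,j}(H_{i,j})^{M_{i,j}} \subset A_{\geq i+1}^{O(M_{i,j})} \cdot \{x_1,\ldots,x_{O(M_{i,j})}\} \cdot \phi_{i,j}(H_{i,j})$ for suitable $x_k$. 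For $j=0$ the factor $\phi_{i,0}(H_{i,0})$ is attached to a finite abelian group, and by \eqref{io} (i.e. \eqref{okey} with $j=0$) we have $\phi_{i,0}(H_{i,0})^m \subset A_{\geq i+1}^m \cdot \phi_{i,0}(H_{i,0})$ for all $m$, so no new translates are needed at all (consistent with the convention $M_{i,0}=1$).

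Next I would assemble these one-factor covers into a cover of the whole dilate $A^M = \prod_{i=l}^1\prod_{j=0}^{r_i}\phi_{i,j}(H_{i,j})^{M_{i,j}}$. Peeling off the outermost block $i=l$ (which has trivial ``lower part'', since $A_{\geq l+1} = \{1\}$), the one-factor covers give $A_l^{M_l} \subset \{y_1,\ldots,y_m\}\cdot A_l$ with $m = O_{l,\vec r}\big(\prod_{j=1}^{r_l}(1+M_{l,j})\big)$, where $A_l = \prod_{j=0}^{r_l}\phi_{l,j}(H_{l,j})$; then I recurse on the truncated nilprogression $A_{\geq 2}$ of derived length $l-1$ for the remaining factors. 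The only subtlety is that as one moves one-factor covers past factors sitting further to the right (lower $i$, or lower $j$ within the same $i$), conjugation via \eqref{dokey}, \eqref{soop} introduces extra contributions into the dilation exponents of those right-hand factors — exactly the phenomenon already quantified in Lemma \ref{mult}. So I would invoke Lemma \ref{mult} (together with \eqref{sock}-type bookkeeping) to absorb these commutator errors: after conjugation, the blown-up exponents are still polynomially bounded in $1 + \sum_{i,j}M_{i,j}$, hence so is the final number of translates, which is a product of $O_{l,\vec r}(1)$ factors each of the form $O_{l,\vec r}(1+\text{(polynomial in }\textstyle\sum M_{i,j}))$. This yields the claimed bound $O_{l,\vec r}(1+\sum_{i,j}M_{i,j})^{O_{l,\vec r}(1)}$. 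The same argument applied symmetrically (using that $A$ is essentially symmetric up to the constants in \eqref{ami}) gives control on the left as well as the right, establishing $K$-control in the sense of the definition.

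The main obstacle I anticipate is purely bookkeeping: keeping the two nested inductions (on $l$ and on $r_i$) synchronized with the commutator-error estimates from Lemma \ref{mult}, and making sure that when a one-factor cover $\{x_1,\ldots,x_k\}$ is commuted past the lower factors, the resulting set of ``translate representatives'' can still be taken inside (a controlled dilate of) $A$ itself rather than drifting into an uncontrolled superset — this is where one repeatedly uses that $A_{\geq i+1}\cdot\phi_{i,0}(H_{i,0})\cdots\phi_{i,j-1}(H_{i,j-1})$ is itself a sub-dilate of $A$, so the errors never leave the ambient progression. Once the structure is set up correctly, each individual step is routine, exactly as in the proof of Lemma \ref{mult}; no new ideas beyond the interval-splitting of $H_{i,j}$ and the commutator estimates already in hand are required.
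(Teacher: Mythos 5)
Your overall plan (double induction on $(l,r)$, cover each dilated factor by boundedly many translates using \eqref{okey}, commute the resulting ``translate representatives'' into place using Lemma \ref{mult}) is the right general strategy, and matches the paper's. However, the induction as you set it up is oriented the wrong way, and this is a real gap rather than a bookkeeping nuisance.

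You propose to peel off the \emph{leftmost} block $A_l$, justifying the choice by the fact that $A_{\geq l+1}=\{1\}$, and then to ``recurse on the truncated nilprogression $A_{\geq 2}$ of derived length $l-1$ for the remaining factors.'' These two halves of the sentence are inconsistent: removing $A_l$ leaves $A_{l-1}\cdot\ldots\cdot A_1$, not $A_{\geq 2}=A_l\cdot\ldots\cdot A_2$. More importantly, $A_{l-1}\cdot\ldots\cdot A_1$ is \emph{not} a coset nilprogression of derived length $l-1$: the axioms \eqref{okey}, \eqref{dokey}, \eqref{soop} for the level-$i$ maps with $i\le l-1$ place the error terms in $A_{\geq i+1}$, which still contains the very $A_l$ you have discarded. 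So the inductive hypothesis cannot be applied to the remainder. Your intuition that ``$A_l$ has trivial lower part'' is true but is the wrong criterion; what matters is not whose errors are trivial, but whose errors do not leak into the piece you intend to keep.

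The paper peels from the opposite end. It inducts on $l$ and, for fixed $l$, on $r_1$, peeling off the single rightmost factor $\phi_{1,r_1}(H_{1,r_1})$. When $r_1=0$ the recursion passes to $A_{\geq 2}$, which genuinely is a coset nilprogression of derived length $l-1$, since the axioms for levels $2,\ldots,l$ never reference level $1$. For $r_1>0$, the rightmost dilated factor $\phi_{1,r_1}(H_{1,r_1})^{M_{1,r_1}}$ is covered by $O(|M|)$ translates — powers of a fixed ``midpoint'' element $\phi_{1,r_1}(m_{1,r_1})$ — with errors landing in $A_{\geq 2}$; these errors and the translate representatives are commuted leftward through $A^{\overline M}$ using \eqref{dokey}, \eqref{soop} and Lemma \ref{mult}, and absorbed into a dilate $(A')^{M'}$ of the coset nilprogression $A'$ obtained from $A$ by decrementing $r_1$. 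The induction hypothesis then covers $(A')^{M'}$, and $A = A'\cdot\phi_{1,r_1}(H_{1,r_1})$ recombines cleanly precisely because the untouched factor sits on the extreme right.

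Two smaller points. First, your interval-splitting step treats $\phi_{i,j}$ as if it were defined on the dilated interval $\{-M_{i,j}N_{i,j},\ldots,M_{i,j}N_{i,j}\}$, but $\phi_{i,j}$ is only defined on $H_{i,j}$; the translates must be built inside $G$ via repeated use of \eqref{okey} (this is exactly what the midpoint trick accomplishes), and the resulting error terms in $A_{\geq i+1}$ must be tracked, which again only works if $A_{\geq i+1}$ stays in play. Second, one could imagine salvaging a ``top-down'' peel by repeatedly re-covering accumulated junk at level $l$, but that is a genuinely different and much messier argument that you have not carried out, and it would require a fresh inductive framework since the clean ``remainder is again a coset nilprogression'' invocation is unavailable.
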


\begin{proof}  We allow implied constants to depend on $l,r_1,\ldots,r_l$, and abbreviate $\sum_{i=1}^l \sum_{j=1}^{r_i} M_{i,j}$ as $|M|$.  As in Lemma \ref{mult}, we induct on $l$, the $l=0$ case being trivial.  In view of \eqref{ami}, it suffices to show that $A^M \subset X \cdot A$ for some set $X$ of size $|X| \ll (1+|M|)^{O(1)}$.

First consider the case $r_1=0$.  Then $A^M = A_{\geq 2}^{\overline{M}} \cdot \phi_{1,0}(H_{1,0})$, and the claim follows from the induction hypothesis.  Now suppose inductively that $r_1 > 0$, and the claim has already been proven for $r_1-1$.

Write $m_{1,r_1} := \lfloor M_{1,r_1}/2 \rfloor$.  By repeated use of \eqref{okey} (and \eqref{ioi}), we can express any element of $\phi_{1,r_1}(H_{1,r_1})^{M_{1,r_1}}$ as a word consisting of $O(|M|)$ elements of $A_{\geq 2}$ or $A_{\geq 2}^{-1}$, $O(|M|)$ instances of $\phi_{1,r_1}(m_{1,r_1})$ or $\phi_{1,r_1}(m_{1,r_1})^{-1}$, and a single instance of an element of $\phi_{1,r_1}(H_{1,r_1})$.  Permuting these terms using \eqref{dokey}, \eqref{soop}, and Lemma \ref{mult}, one eventually arrives at
$$ \phi_{1,r_1}(H_{1,r_1})^{M_{1,r_1}} \subset \{ \phi_{1,r_1}(m_{1,r_1})^n: n = O(|M|) \} \cdot A_{\geq 2}^{(O(|M|)^{O(1)})_{2 \leq i \leq l; 1 \leq j \leq r_i} \cdot \phi_{1,r_1}(H_{1,r_1})}.$$
The left-hand side here is the final term in $A^M$.  Inserting the above inclusion and using \eqref{dokey}, \eqref{soop}, and Lemma \ref{mult} repeatedly, one eventually obtains
$$ A^M \subset \{ \phi_{1,r_1}(m_{1,r_1})^n: n = O(|M|) \} \cdot (A')^{M'} \cdot \phi_{1,r_1}(H_{1,r_1})$$
where $A'$ is the coset nilprogression formed from $A$ by decrementing $r_1$ by $1$ (thus dropping $H_{1,r_1}$ and $\phi_{1,r_1}$), and $M'$ is some tuple with all exponents $O(1+|M|)^{O(1)}$.  Applying the induction hypothesis we see that $(A')^{M'} \subset X' \cdot A'$ for some $|X'| \ll (1+|M|)^{O(1)}$.  Since $A = A' \cdot \phi_{1,r_1}(H_{1,r_1})$, we obtain the claim.
\end{proof}

Combining \eqref{sock} and Lemma \ref{control-o}, we obtain

\begin{corollary}[Coset nilprogressions control their sumsets]\label{control-cor} Let $(A,G)$ be a coset nilprogression of derived length $l$ and ranks $r_1,\ldots,r_l$.  Then for any $k \geq 1$, $A^{\pm k}$ is $O_{l,r_1,\ldots,r_l}(k^{O_{l,r_1,\ldots,r_l}})$-controlled by $A$.
\end{corollary}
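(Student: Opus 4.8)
The plan is to chain together the two results established immediately above, namely the dilation estimate \eqref{sock} and the covering Lemma \ref{control-o}. Throughout, implied constants may depend on $l, r_1, \ldots, r_l$, and I abbreviate accordingly.

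First, apply \eqref{sock}: for every $k \geq 1$ there are constants $c_{i,j}$ (depending only on $l, r_1, \ldots, r_l$) with $A^{\pm k} \subset A^M$, where $M = (k^{c_{i,j}})_{1 \leq i \leq l;\, 1 \leq j \leq r_i}$. Writing $c := \max_{i,j} c_{i,j}$, we have $\sum_{i,j} M_{i,j} \ll k^{c}$, so $1 + \sum_{i,j} M_{i,j} \ll k^{O(1)}$.

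Next, apply Lemma \ref{control-o} to this tuple $M$: it yields that $A^M$ is $C$-controlled by $A$, where $C \ll \big(1 + \sum_{i,j} M_{i,j}\big)^{O(1)} \ll k^{O(1)}$. By definition of $C$-control there is a set $X$ with $|X| \leq C$ and $A^M \subset (X \cdot A) \cap (A \cdot X)$. Since $A^{\pm k} \subset A^M$, the same $X$ gives $A^{\pm k} \subset (X \cdot A) \cap (A \cdot X)$. Moreover $1 \in A$ (each factor $\phi_{i,j}(H_{i,j})$ of $A$ contains $\phi_{i,j}(0) = 1$ by \eqref{ioi}, since $0 \in H_{i,j}$), so $A \subset A^{\pm k}$ and hence $|A| \leq |A^{\pm k}|$; thus the cardinality requirement $|A| \leq C |A^{\pm k}|$ for $C$-control holds automatically (as $C \geq 1$). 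Therefore $A^{\pm k}$ is $O(k^{O(1)})$-controlled by $A$, which is the claim.

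There is no genuine obstacle here: the only point requiring care is the bookkeeping of exponents — one must track the constant $c = \max_{i,j} c_{i,j}$ produced by \eqref{sock} through the $O(1)$-th power supplied by Lemma \ref{control-o}, and observe that the resulting exponent on $k$ depends only on $l$ and $r_1, \ldots, r_l$, as required.
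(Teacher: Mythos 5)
Your argument is correct and is precisely the route the paper takes: the paper's proof of this corollary is simply "Combining \eqref{sock} and Lemma \ref{control-o}, we obtain [the claim]," and you have spelled out exactly that chain, including the (easy but genuinely necessary) observation that $1\in A$ ensures $A\subset A^{\pm k}$ so the cardinality half of the $K$-control definition holds. No gap.
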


\begin{remark} This corollary implies that coset nilprogressions behave very much like $O_{l,r_1,\ldots,r_l}(1)$-approximate groups, though they are not quite so, mainly because coset nilprogressions are not quite symmetric (cf. \eqref{ami}).
\end{remark}

It is clear that Lemma \ref{nilpoly} is an immediate consequence of Corollary \ref{control-cor}.  Now we turn to Lemma \ref{virtnil}.  We begin with a purely algebraic lemma.

\begin{lemma}[Finite extensions of nilpotent groups are virtually nilpotent]\label{nil-lem}  Let $G$ be a nilpotent group of step $s$ generated by $k$ generators, and let $G'$ be a finite extension of $G$, thus one has the short exact sequence
$$ 0 \to H \to G' \to G \to 0$$
for some finite abelian group $H$.  Then $G'$ contains a subgroup $N$ of index $O_{s,k}( |H| )^{O_{s,k}(1)}$ which is nilpotent of step $s$ generated by $O_{s,k}(1)$ generators.
\end{lemma}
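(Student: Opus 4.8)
The plan is to reduce to the case where $G'$ is itself finitely generated and then to find the desired nilpotent subgroup inside the preimage of a suitable finite-index subgroup of $G$. First I would observe that we may assume $G'$ is generated by $O_{s,k}(1)$ elements: lift the $k$ generators of $G$ to elements $g_1,\dots,g_k$ of $G'$, and together with the $O(\log|H|)$ generators of the finite abelian group $H$ (or just all of $H$, which costs $|H|$ generators but that is harmless since index bounds are allowed to depend polynomially on $|H|$) these generate $G'$. Actually the cleaner route is: let $N_0 := \langle g_1,\dots,g_k\rangle \le G'$; then $N_0$ surjects onto $G$ with kernel contained in $H$, so $N_0$ is itself a central-by-(finite abelian) — more precisely $N_0 \cap H$ is a finite abelian normal subgroup of $N_0$ with $N_0/(N_0\cap H) \cong G$ nilpotent of step $s$. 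So without loss of generality we may assume $G' = N_0$ is generated by $k$ elements and $H$ itself is $\langle g_1,\dots,g_k\rangle \cap H$, in particular $G'/H$ is nilpotent of step $s$ on $k$ generators.

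Next, the key step: I would pass to a finite-index subgroup of $G$ on which the extension "becomes central" in a controlled way. The lower central series $G' = G'_1 \ge G'_2 \ge \cdots$ satisfies $G'_{s+1} \le H$ (since $G'/H$ has step $s$), and $G'_{s+1}$ is a subgroup of the finite abelian group $H$, hence finite of size $O(|H|)$. Then $G'/G'_{s+1}$ is still a finite extension of $G$ by the finite abelian group $H/G'_{s+1}$, but now this quotient is an extension with one fewer "layer" — one can try to induct on $s$. Alternatively, and more directly: consider the action of $G'$ by conjugation on $H$ (this is an action of $G$, since $H$ is abelian so $H$ acts trivially on itself); the kernel $K$ of the map $G \to \operatorname{Aut}(H)$ has index $|\operatorname{Aut}(H)| = O(|H|)^{O(1)}$ in $G$, and by a theorem on finitely generated nilpotent groups of bounded step and rank (or just by writing $G$ as a quotient of the free nilpotent group of class $s$ on $k$ generators, whose subgroups of a given finite index are generated by a bounded number of elements), $K$ is generated by $O_{s,k}(|H|^{O_{s,k}(1)})$ elements and still nilpotent of step $\le s$. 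Let $\tilde K$ be the preimage of $K$ in $G'$; then $H$ is central in $\tilde K$, so $\tilde K$ is a central extension of the step-$s$ nilpotent group $K$ by the finite abelian group $H$, hence $\tilde K$ is nilpotent of step at most $s+1$. This already gives virtual nilpotency of step $s+1$; to get step exactly $s$ as claimed, I would instead quotient out by $\tilde K_{s+1}$ (a finite subgroup of $H$) first, or note that for the applications step $s$ versus $s+1$ is not essential — but if we insist on step $s$, we use that $\tilde K_{s+1} \le H \cap \tilde K_{s+1}$ is a finite normal subgroup and pass to a subgroup of $\tilde K$ complementing it, which exists up to finite index by Schur–Zassenhaus-type arguments on the finite quotient $H/(H\cap\text{torsion})$; I would absorb this into the index.

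The main obstacle I expect is the bookkeeping of the index and generator bounds — specifically, controlling how many generators a finite-index subgroup of a $k$-generated step-$s$ nilpotent group needs, and verifying this can be taken to be $O_{s,k}(1)$ rather than growing with the index. The clean way is: any subgroup of index $m$ in a $k$-generated group is generated by at most $1 + (k-1)m$ elements (Schreier), which is far too many; instead one uses that in a nilpotent group of step $s$, a subgroup of finite index contains the $N$-th powers' subgroup $G^{[N]}$ for suitable $N$, and $G/G^{[N]}$ is finite of bounded size only if $N$ is bounded — so this does not immediately give $O_{s,k}(1)$ generators unless we are more careful. The resolution is that we do not actually need $O_{s,k}(1)$ generators for an arbitrary finite-index subgroup; we only need it for $N$, and we can afford $N$ to be generated by $O_{s,k}(|H|^{O_{s,k}(1)})$ generators as long as the final nilpotent subgroup in the statement is allowed that many generators — but the statement demands $O_{s,k}(1)$ generators. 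So the genuinely careful step is: after passing to $\tilde K$ (central extension of step-$s$ nilpotent $K$ by finite $H$), pass further to the subgroup $N$ of $\tilde K$ generated by the $N_0$-th powers of the original $k$ lifted generators for suitable $N_0 = O_{s,k}(|H|^{O_{s,k}(1)})$; one checks using the Hall–Petrescu / commutator collection formula that in a step-$(s+1)$ nilpotent group the subgroup generated by $N_0$-th powers of the generators has index $O(N_0^{O_{s,k}(1)})$, is generated by those $k$ elements, and if $N_0$ is chosen divisible by $|H|$ then this subgroup meets $H$ trivially, hence is nilpotent of step exactly $s$. This last verification — that a high-power subgroup is $k$-generated, bounded-index, and avoids the torsion $H$ — is where the real work lies, and it is a standard but somewhat delicate computation with the collection process in nilpotent groups.
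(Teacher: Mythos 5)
Your approach has a genuine gap at the key step, and it is precisely the point the paper's remark after the lemma flags. You pass to the kernel $K$ of the conjugation homomorphism $G \to \operatorname{Aut}(H)$ and claim its index in $G$ is $|\operatorname{Aut}(H)| = O(|H|)^{O(1)}$. But for abelian $H$ this is false: taking $H = (\Z/2\Z)^d$ gives $|H| = 2^d$ while $|\operatorname{Aut}(H)| = |GL_d(\F_2)| \asymp 2^{d^2} = |H|^{\log_2 |H|}$, which is superpolynomial in $|H|$. Thus passing to the full centraliser of $H$ only yields an index bound of $O(|H|)^{O(\log|H|)}$ --- exactly the weaker bound the paper explicitly notes (in the remark following the lemma) one gets by this route in the general (non-abelian $H$) case, and which does \emph{not} suffice for the polynomial bound claimed in the lemma. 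Everything downstream of this step in your argument inherits that loss: the index of $\tilde K$, and hence of any subgroup of $\tilde K$ you construct, is already $|H|^{\Theta(\log|H|)}$.

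The paper's proof avoids this by never centralising all of $H$. Instead it centralises only a bounded family of specific elements: the $(s+1)$-fold commutators $h_{i_1,\ldots,i_{s+1}}$ of the lifted generators (there are $O_{s,k}(1)$ of these) and their normal closure $H'$. The key claim is that for a fixed $e \in G$ and fixed $h \in H$ one can choose $n \ll |H|^{O(1)}$ (polynomial, via writing iterated commutators of $e^n$ as $f_1^{P_1(n)}\cdots f_J^{P_J(n)}$ with bounded-degree polynomials $P_j$ vanishing at $0$, and then pigeonholing to make each $P_j(n)$ kill $h$ in the orbit $h, \rho(f_j)h,\ldots$) so that the normal closure $N_{e^n}$ fixes $h$. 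Taking a common such $n$ for the $k$ generators and the finitely many commutators gives $n \ll |H|^{O_{s,k}(1)}$ and a subgroup $N = \langle (e_1')^n,\ldots,(e_k')^n\rangle$ that centralises $H'$, so one is reduced to the central case with polynomial control. Your final idea of passing to $n$-th powers of the lifted generators is in the right spirit and is also what the paper does, but the mechanism for choosing $n$ polynomially in $|H|$ is the commutator-polynomial/pigeonhole claim, not a centraliser of all of $H$; and your Schur--Zassenhaus digression for getting step $s$ instead of $s+1$ is not quite coherent (the relevant orders need not be coprime), whereas the paper gets step $s$ in the central case directly by choosing $M$ so that $P_j(M!|G'_{s+1}|)$ is divisible by $|G'_{s+1}|$ and invoking Lagrange.
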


\begin{proof}  We allow all implied constants to depend on $s,k$.
We may view $H$ as a normal subgroup of $G'$, with $G \equiv G'/H$.  Let $\pi: G' \to G$ be the projection map, and $e_1,\ldots,e_k$ be a set of generators for $G$. We lift each $e_j$ up to an element $e'_j \in \pi^{-1}(\{e_j\})$ of $G'$.

Since $H$ is abelian, we see that the conjugation action of $G'$ on $H$ descends to an action $\rho: G \to \operatorname{Aut}(H)$ of $G$ on $H$.  
Let us first consider the special case when $\rho$ is trivial, or equivalently that $H$ is a central subgroup of $G'$; since $G$ is nilpotent of step $s$, this makes $G'$ nilpotent of step $s+1$, with the $(s+1)^{th}$ commutator group $G'_{s+1}$ in the lower central series contained in $H$.  
Then we set $N$ to be
the group generated by $\{ g^{M! |G'_{s+1}|}: g \in G \}$ for some large integer $M = O(1)$. On
the one hand, $\pi(N)$ is clearly a normal subgroup of $G$, and $G/\pi(N)$ is
an $M! |G'_{s+1}|$-torsion nilpotent group of step $s$ generated by $k$ generators, and thus
$\pi(N)$ has index $O_M(|H|^{O(1)})$ in $G$; since $G'$ is an extension of $G$ by $H$, we conclude
that $N$ has index $O_M(|H|^{O(1)})$ in $G'$. On the other hand, the $(s+1)^{th}$ commutator
group of $N$ is generated by $s + 1$-fold commutators of $g^{M!|G_{s+1}|}$ for $g \in G'$. By
repeated use of the commutator identity
\begin{equation}\label{comm}
[ xy, z ] = [x,[y,z]] [y,z] [x,z]
\end{equation}
and exploiting the nilpotency of $G'$, we see that we can express any such $s+1$-fold commutator 
in the form 
$$ f_1^{P_1(n)} \ldots f_J^{P_J(n)}$$
for some $f_1,\ldots,f_J \in G'_{s+1}$ and some polynomials $P_J(n)$ of degree $O(1)$ whose coefficients have numerator and denominator $O(1)$, and with vanishing constant term.  Setting $n := M! |G'_{s+1}|$ for $M=O(1)$ large enough and using Lagrange's theorem $f^{|G'_{s+1}|} = 1$ for all $f \in G'_{s+1}$, we see that these commutators vanish, and so $N$ is nilpotent of step $s$.  This concludes the claim when $\rho$ is trivial.

To handle the general case when $\rho$ is non-trivial, we need a key claim: given any $e \in G$ and $h \in H$, there exists an integer $1 \leq n \ll |H|^{O(1)}$ depending on $e,h$ such that the normal subgroup $N_{e^n}$ of $G$ generated by $e^n$ fixes the element $h$ (with respect to the action $\rho$).  To see this, observe from the nilpotency of $G$ that $N_{e^n}$ is generated by $O(1)$ iterated commutators, each of the form 
\begin{equation}\label{enr}
[\ldots[e^n, e_{i_1}], \ldots, e_{i_r}]
\end{equation}
for some $0 \leq r < s$ and $1 \leq i_1,\ldots,i_r \leq k$.  By repeated use of the commutator identity
$$ [ xy, z ] = [x,[y,z]] [y,z] [x,z]$$
and exploiting the nilpotency of $G$, we see that we can express \eqref{enr} in the form
$$ f_1^{P_1(n)} \ldots f_J^{P_J(n)}$$
where $J=O(1)$, $f_1,\ldots,f_J$ depend on $e, e_{i_1}, \ldots, e_{i_r}$ but are independent of $n$, and the $P_j(n)$ are polynomials of degree $O(1)$ whose coefficients are rationals of numerator and denominator $O(1)$.  Furthermore, since \eqref{enr} vanishes when $n=0$, we can ensure that $P_j(0)=0$ for all $j$.

For each $1 \leq j \leq J$, we see from the pigeonhole principle applied to the sequence $h, \rho(f_j) h, \rho(f_j^2) h, \ldots$ that there exists an integer $1 \leq n_j \leq |H|$ such that $\rho(f^{n_j}) h = h$.  If we set $n := M! n_1 \ldots n_J$ for some sufficiently large integer $M=O(1)$ (so that $n = O( |H|^{O(1)})$), we see that $P_j(n)$ is a multiple of $n_j$ for each $1 \leq j \leq J$, and thus all of the commutators \eqref{enr} fix $h$.  Since these commutators generate $N_{e^n}$, the claim follows.

Next, we lift each $e_j$ arbitrarily up to some element $e'_j \in \pi^{-1}(\{e_j\})$.  Since the $s+1$-fold iterated commutators of $e_1,\ldots,e_k$ vanish, we have
\begin{equation}\label{eis}
 [\ldots[e'_{i_1},e'_{i_2}],\ldots,e'_{i_{s+1}}] =: h_{i_1,\ldots,i_{s+1}} \in H
 \end{equation}
for all $1 \leq i_1,\ldots,i_{s+1} \leq k$ and some $h_{i_1,\ldots,i_{s+1}} \in H$.

By the claim just proven (and taking least common multiples), we can find an integer $1 \leq n \ll |H|^{O(1)}$ such that $N_{e_i^n}$ fixes all of the $h_{i_1,\ldots,i_{s+1}}$, or equivalently that $h_{i_1,\ldots,i_{s+1}}$ is centralised by $\pi^{-1}(N_{e_i^n})$.  Since the $\pi^{-1}(N_{e_i^n})$ are normal, they also centralise $h_{i_1,\ldots,i_{s+1}}$.  If we let $H' \leq H$ be the normal subgroup of $G'$ generated by $h_{i_1,\ldots,i_{s+1}}$ and all its conjugates, we thus see that $H'$ is centralised by each of the $\pi^{-1}(N_{e_i^n})$.  

If we now let $N$ be the subgroup of $G'$ generated by $(e'_1)^n,\ldots,(e'_k)^n$, then we see that each of the $(e'_i)^n$ centralise $H'$, and so $N$ centralises $H$.  The claim now follows from the case of trivial $\rho$ discussed at the beginning of the proof.
\end{proof}

\begin{remark} If $H$ is not abelian, then one can obtain a weaker version of Lemma \ref{nil-lem} in which the index of $N$ is only shown to be $O( |H| )^{O( \log |H| )}$ rather than $O( |H| )^{O(1)}$.  We sketch the proof as follows.  The conjugation action of $G'$ on $H$ yields a homomorphism $\phi$ from $G'$ to $\operatorname{Aut}(H)$.  On the other hand, a greedy argument shows that $H$ is generated by $O( \log |H| )$ generators, and so $|\operatorname{Aut}(H)| = O( |H| )^{O( \log |H| )}$.  Thus the kernel of $\phi$ - i.e. the centraliser $C_{G'}(H)$ of $H$ - has index $O( |H| )^{O( \log |H| )}$, and so the projection $\pi( C_{G'}(H) )$ of this centraliser has index $O( |H| )^{O( \log |H| )}$ in $G'$.  An easy application of the pigeonhole principle then yields a positive integer $N_i = O( |H| )^{O( \log |H| )}$ for each $1 \leq i \leq k$ such that $(e'_i)^{N_i}$ lies in this projection.  The group $\tilde G'$ generated by $(e'_i)^{N_i}$ has index $O( |H| )^{O( \log |H| )}$ in $G'$.  Thus, replacing $G'$ by this group if necessary (and replacing $H$ by $\tilde G' \cap H$, and $G$ by $\pi(\tilde G')$), we may assume that all the $e_i$ centralise $H$, i.e. $H$ is central, and we can now argue as in the case of trivial $\rho$ as before.   The author does not know, however, if the full strength of Lemma \ref{nil-lem} holds in the non-abelian case.
\end{remark}

Now we can prove Lemma \ref{virtnil}.  We begin with the case of coset nilprogressions.  We induct on the derived length $l$ of the coset nilprogression, as the case $l=0$ is vacuous.  We allow all implied constants to depend on $l,r_1,\ldots,r_l$.  We expand
$$ A = \phi_{l,0}(H_{l,0}) \cdot \ldots \cdot \phi_{l,r_l}(H_{l,r_l}) \cdot A_{l-1} \cdot \ldots \cdot A_1.$$
Observe from Definition \ref{cosdef} that $\phi_{l,0}(H_{l,0})$ is a finite abelian subgroup $H$ of $G$ that is normalised by $\langle A \rangle$, and that $\phi_{l,0}(H_{l,0}) \cdot \ldots \cdot \phi_{l,r_l}(H_{l,r_l})$ generates a larger abelian subgroup $V$ of $G$ that is also normalised by $\langle A \rangle$.  Observe that $H \subset A$, so in particular $|H| \leq |A|$.

Let $\pi: \langle A \rangle \to \langle A \rangle/V$ be the quotient map.  Observe that $\pi( A_{l-1} \cdot \ldots \cdot A_1 )$ is a coset nilprogression of derived length $l-1$, ranks $r_1,\ldots,r_{l-1}$, and cardinality $O(|A|)$, which generates the group $\langle A \rangle/V$.  Applying the induction hypothesis, we conclude that $\langle A \rangle/V$ contains a nilpotent subgroup $N$ of step $O(1)$, index $O(|A|^{O(1)})$, and generated by $O(1)$ generators.  

The group $\langle A \rangle/H$ projects onto $\langle A \rangle/V$ in the obvious manner; pulling back $N$, we obtain a subgroup $N'$ of $\langle A \rangle/H$ which still has index $O(|A|^{O(1)})$.  Since $V/H$ has $O(1)$ generators, we see that $N'$ is generated by $O(1)$ generators; as $N$ is nilpotent of step $O(1)$, we see from \eqref{dokey}, \eqref{soop} that $N'$ is also nilpotent of a slightly larger step $O(1)$.

Finally, as $\langle A \rangle$ projects onto $\langle A \rangle/H$, we can pull $N'$ back to a subgroup $N''$ of $\langle A \rangle$ of index $O(|A|^{O(1)})$.  Applying Lemma \ref{nil-lem} to the short exact sequence
$$ 0 \to H \to N'' \to N' \to 0$$
we obtain the claim.

In the case of nilprogressions, all finite abelian groups are trivial, and an inspection of the above argument shows that we can take $N = \langle A \rangle$ throughout the argument (as there is never any need to pass to a subgroup of finite index).  Indeed, the claim is significantly simpler in this case (Lemma \ref{nil-lem} does not need to be invoked).  We leave the details to the reader.

\section{The key proposition}\label{kps}

In this section we state the key proposition, describing sets of small ``doubling'' with respect to the action of an approximate group, and show how it implies Theorem \ref{frei-solv}.

\begin{proposition}[Key proposition]\label{key-prop}  Let $G = (G,\cdot)$ be a multiplicative group, and $V = (V,+)$ be an additive group.  Let $\rho: G \to \operatorname{Aut}(V)$ be an action of $G$ on $V$, let $K \geq 1$, let $A$ be a $K$-approximate group in $G$, and let $E$ be a centred subset of $V$ (see Appendix \ref{pse} for definitions).  Suppose also that 
\begin{equation}\label{ess}
 |\rho(A^{8})(2E)| \leq K |E|
\end{equation}
where 
$$ \rho(A)(E) := \{ \rho(a)(v): a \in A; v \in E \}.$$
Then there exists a coset progression
$$ H + P = H + \{ n_1 v_1 + \ldots + n_r v_r: a_i \in \Z, |n_i| \leq N_i \hbox{ for all } 1 \leq i \leq r \} $$
in $V$ of rank $r = O_K(1)$ and
$$ N_1 \geq N_2 \geq \ldots \geq N_r$$
that contains $E$ with
\begin{equation}\label{hpe}
 |H| |P| \ll_K |E|,
\end{equation}
as well as a centred subset $A'$ of $A^{16}$ with
$$ |A'| \gg_K |A|$$
such that for every $a \in A'$, we have $\rho(a)(H) = H$ and
$$ \rho(a) v_j = h_{a,j} + n_{a,j,1} v_1 + \ldots + n_{a,j,j-1} v_{j-1} + v_j$$
for all $1 \leq j \leq r$ and some $h_{a,j} \in H$ and integers $n_{a,j,i}$ for $1 \leq i \leq j-1$ with
$$ |n_{a,j,i}| \ll_K N_i/N_j.$$
Furthermore we have $H+P \subset O_K(1) \rho(A^8)(2E)$.
\end{proposition}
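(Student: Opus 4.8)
The plan is to separate the problem into an \emph{abelian} part (the internal additive structure of $E$) and a \emph{dynamical} part (how $\rho(A^8)$ moves that structure), the former handled by Green--Ruzsa and the latter by a Cauchy--Schwarz/pigeonhole argument followed by a covering argument. For the abelian part, note that taking the identity element of $A^8$ in \eqref{ess} gives $|2E| \le |\rho(A^8)(2E)| \le K|E|$, so $E$ is an additive set of doubling at most $K$. Applying Theorem \ref{gr} to $E$ yields a (proper) coset progression $H + P = H + \{n_1 v_1 + \dots + n_r v_r : |n_i| \le N_i\}$ of rank $r = O_K(1)$ with $E \subseteq H+P$, $|H||P| \ll_K |E|$ and $H+P \subseteq \pm O_K(1)E$; reorder so that $N_1 \ge \dots \ge N_r$. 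Since $E$ is centred (in particular $0 \in E$ and $-E = E$) one has $\pm kE = kE$ for every $k$, and $2E \subseteq \rho(A^8)(2E)$ by taking the identity; hence $\pm O_K(1)E \subseteq O_K(1)\,\rho(A^8)(2E)$ as iterated sumsets, which already establishes the final containment $H + P \subseteq O_K(1)\,\rho(A^8)(2E)$.

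\textbf{Step 2 (many group elements that almost fix $E$).} For $v \in V$ set $N(v) := \#\{a \in A^4 : v \in \rho(a)(E)\}$. Then $N$ is supported on $\rho(A^4)(E) \subseteq \rho(A^8)(2E)$, a set of cardinality $\le K|E|$, while $\sum_v N(v) = |A^4|\,|E|$. Cauchy--Schwarz gives $\sum_v N(v)^2 \ge |A^4|^2|E|/K$, and since $\sum_v N(v)^2 = \sum_{a,b \in A^4}|\rho(a)(E) \cap \rho(b)(E)|$ we find $\gg_K |A^4|^2$ pairs $(a,b) \in A^4 \times A^4$ with $|\rho(a)(E) \cap \rho(b)(E)| \gg_K |E|$. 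Pigeonholing in $a$, some $a_0 \in A^4$ works for $\gg_K |A^4|$ many $b$; translating by $a_0^{-1}$ (and using that $A$ is symmetric, so $A^{-4}A^4 = A^8$, and that $\rho(c)$ preserves cardinalities) we obtain a centred set $A'' \subseteq A^8$ with $|A''| \gg_K |A|$ such that $|E \cap \rho(a)(E)| \gg_K |E|$ for every $a \in A''$.

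\textbf{Step 3 (commensurable preservation of $H+P$).} Fix $a \in A''$ and put $E_a := E \cap \rho(a)(E)$, so $|E_a| \gg_K |E| \gg_K |H+P|$. Then $E_a \subseteq E \subseteq H+P$ and also $E_a \subseteq \rho(a)(E) \subseteq \rho(a)(H+P)$, so $E_a$ is a $\gg_K$-dense subset of both $H+P$ and of the coset progression $\rho(a)(H+P)$ (automorphisms of $V$ carry coset progressions to coset progressions of the same rank and volume). Since $E_a + (H+P) \subseteq H + 2P$ has size $\ll_K |E_a|$, Ruzsa's covering lemma gives $H+P \subseteq (E_a - E_a) + X$ with $|X| \ll_K 1$, and $E_a - E_a \subseteq \rho(a)((H+P)-(H+P)) \subseteq \rho(a)(H + O_K(1)P)$; running the symmetric argument, $H+P$ and $\rho(a)(H+P)$ are \emph{commensurable} (each contained in $O_K(1)$ translates of a bounded dilate of the other), for every $a \in A''$.

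\textbf{Step 4 (unipotent normal form) and the main obstacle.} It remains to turn the commensurability of Step 3 into the precise normal form, uniformly over $a \in A''$: one replaces $H+P$ by a single commensurable coset progression (still proper, with $N_1 \ge \dots \ge N_r$) that is robust enough that any automorphism $g$ with $g(H+P)$ commensurable to $H+P$ must fix $H$ and satisfy $g v_j \in H + O_K(1)P$ for each $j$; applying this to $\rho(a)$, and crucially to the long sub-box $N_j v_j \in P$ (so that $\rho(a)(N_j v_j) \in H + O_K(1)P$ forces $|n_{a,j,i}| \ll_K N_i/N_j$), gives the stated bounds. The remaining points — that the coefficient of $v_j$ in $\rho(a)v_j$ is exactly $1$ and that no $v_{j'}$ with $j' > j$ occurs, that $H$ may be taken genuinely $\rho(A'')$-invariant rather than merely commensurably so, and that one progression serves all of $A''$ — are bounded-index/torsion bookkeeping: the induced action of $\rho(A'')$ on each successive quotient of $P$, and on the finite part, is by a set of bounded size inside a group, hence trivialises after passing from $A^8$ to $A^{16}$ and, if necessary, enlarging $H$ to the (bounded) subgroup generated by its orbit; then $A'$ is the resulting bounded-index refinement of $A''$, still centred, inside $A^{16}$, of size $\gg_K |A|$. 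I expect this last step to be the real work: Steps 1--3 are soft, but extracting a \emph{single} coset progression on which every $\rho(a)$ with $a \in A''$ acts \emph{exactly} unipotent-upper-triangularly with entries $\ll_K N_i/N_j$, and pinning down $H$, requires a careful analysis of the automorphisms of a coset progression (this is where the ordering $N_1 \ge \dots \ge N_r$ and the passage from $A^8$ to $A^{16}$ are genuinely used).
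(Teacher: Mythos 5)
Your Steps 1--3 are sound and represent a natural first pass, but the proof has a genuine gap exactly where you flag it in Step 4, and the gap is not ``bounded-index/torsion bookkeeping''; it is the central difficulty, and your Steps 1--3 do not supply the tools needed to close it.

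The specific problem is that commensurability is much weaker than what you need. Step 3 gives you: for each $a \in A''$, $\rho(a)(H+P) \subset T_a + H + 2P$ for some \emph{uncontrolled} covering set $T_a$ of bounded size. This does not place $\rho(a)(v_j)$ in $H + O_K(1)P$ --- it places it in a bounded-size union of translates of $H+2P$, where the translates $T_a$ are arbitrary elements of $\rho(a)(H+P)$ with no structural control whatsoever. You then posit in Step 4 that one can ``replace $H+P$ by a single commensurable coset progression that is robust enough that any automorphism $g$ with $g(H+P)$ commensurable to $H+P$'' satisfies the exact containments; but no such ``robustness'' follows from commensurability alone, and producing the required robust structure is precisely where the paper does its work. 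The two missing ingredients are:
(a) a \emph{near-invariance} step (Proposition \ref{near-invariant} in the paper), which replaces your constant-density conclusion $|E \cap \rho(a)(E)| \gg_K |E|$ with the far stronger $|\rho(a)(E')\setminus E'| \le \eps|E'|$ for arbitrarily small $\eps$ (at the cost of refining both $E$ and $A$), obtained by the Balog--Szemer\'edi--type Proposition \ref{bz-prop2} combined with an $\ell^2$-monotonicity/pigeonhole argument; and
(b) a \emph{S\'ark\H{o}zy-type theorem} (Proposition \ref{sarkozy}), which locates a subgroup $H'$ of $H$ and a dilated progression $P_l$ inside iterated sumsets $mE' - mE'$ in a \emph{representation-robust} way (every element has many representations), so that the near-invariance of $E'$ under $\rho(a)$ literally forces $\rho((A')^{F(M)})(H'+P_l) \subset H+P$ (inclusion \eqref{joke} in the paper), not merely a covering by bounded translates.

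Only with (a) and (b) in hand can one run the argument you gesture at: since $\rho(a)(lv_j)$ now genuinely lies in $H+P$ (with extra room to spare for all small iterates), properness plus $N_1 \ge \dots \ge N_r$ gives the $|n_{a,j,i}| \ll_K N_i/N_j$ bounds (Corollary \ref{itv}); the invariance of a finite subgroup $H''$ comes from a pigeonhole on the groups $\langle \rho((A')^j)(H')\rangle$ (Corollary \ref{itg}); and the exact unipotent upper-triangular form, with diagonal entries exactly $1$ and no lower-right entries, comes from an iterated pigeonhole on $\rho(a)(lv_1), \rho(a)(lv_2), \ldots$ followed by taking quotients $a'{}^{-1}a$ (Proposition \ref{itg2}), which is where the passage from $A^8$ to $A^{16}$ occurs. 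Your Step 2 Cauchy--Schwarz argument produces $A'' \subset A^8$ with density-$\gg_K$ intersections, which cannot support these iterated pigeonholes because the error in each iterate accumulates multiplicatively with a constant $\gg_K 1$ rather than an arbitrarily small $\eps$. In short: your setup correctly isolates the abelian structure of $E$ and the relevance of $N_1\ge\dots\ge N_r$, but you are missing the two quantitative tools that turn soft commensurability into the precise unipotent normal form.
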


\begin{remark} In the converse direction, if $G, V, \rho, K, H, P$ are as in the above proposition, $A' \subset G$, and $E \subset H+P$ obeys \eqref{hpe}, then it is not hard to verify that $|\rho(A')(2E)| \ll_K |E|$.  Thus, up to constants, and the refinement of $A^{16}$ by a multiplicative factor, the above proposition is a tight description of sets $E$ which have small ``doubling'' with respect to an approximate group $A$.  The conclusion of this proposition can be viewed as a quantitative assertion that the action of $A'$ on $E$ is ``virtually unipotent''.  The exponents $8$ and $16$ can certainly be lowered, perhaps all the way to $2$, but we will not attempt to do so here.
\end{remark}

In the remainder of this section we show how this proposition implies Theorem \ref{frei-solv}.  To begin with we do not assume that $G$ is totally torsion-free, and discuss at the end of the section what changes when this additional assumption is added.

We induct on $l$.  The case\footnote{One could also use $l=0$ as the base case for the induction.} $l=1$ follows from Theorem \ref{gr2}, so suppose that $l \geq 2$ and that the theorem has already been proven for $l-1$.  

Fix $A,G,l,K$; we allow all implied constants to depend on $K,l$.  
By Lemma \ref{small}(i) we may assume without loss of generality that $A$ is a $O(1)$-approximate group.

Let $G^{(l)}$ be the $l^{th}$ commutator group in the derived series $G^{(1)} = G$, $G^{(i+1)} := [G^{(i)}, G^{(i)}]$, thus $G^{(l)}$ is an abelian normal subgroup of $G$.  Write $G' := G/G^{(l)}$, thus $G$ is an abelian extension of $G'$ (which is solvable of derived length at most $l-1$) by some abelian group $V = (V,+) \equiv G^{(l)}$, which we view additively.  Let $\pi: G \to G'$ be the projection map.  We shall abuse notation and identify $\ker(\pi)$ with $V$.

As in Section \ref{lamp-sec}, we define $E := \ker(\pi) \cap A^{20}$, thus by Lemma \ref{proj} $E$ is a centred subset of $V$ of cardinality $|E| \sim |A|/|\pi(A)|$.  The group $G$ acts on $V$ by conjugation:
$$ \rho(g)(v) := g v g^{-1},$$
and we have
$$ \rho(A^8)(2E) \subset \ker(\pi) \cap A^{100}$$
and thus by Lemma \ref{proj}
$$ |\rho(A^8)(2E)| \sim |E|.$$
Applying Proposition \ref{key-prop}, we can find a coset progression
$$ H + P = H + \{ n_1 v_1 + \ldots + n_r v_r: a_i \in \Z, |n_i| \leq N_i \hbox{ for all } 1 \leq i \leq r \} $$
in $V$ of rank $r = O(1)$ and
$$ N_1 \geq N_2 \geq \ldots \geq N_r$$
that contains $E$ with
\begin{equation}\label{hpe2}
|H| |P| \sim |E|
\end{equation}
and a centred subset $A'$ of $A^{16}$ of cardinality $|A'| \sim |A|$ such that for every $a \in A'$, we have 
\begin{equation}\label{aee}
\rho(a)(H) = H
\end{equation}
and
\begin{equation}\label{aee2}
\rho(a) v_j = h_{a,j} + n_{a,j,1} v_1 + \ldots + n_{a,j,j-1} v_{j-1} + v_j
\end{equation}
for all $1 \leq j \leq r$ and some $h_{a,j} \in H$ and integers $n_{a,j,i}$ for $1 \leq i \leq j-1$ with
$$ |n_{a,j,i}| \ll N_i/N_j.$$
Furthemore we have $H+P \subset O(1) \rho(A^2)(E)$.   In particular, $H+P \subset V \cap A^{O(1)}$.

By \eqref{aee2}, the additive group $\langle H+P\rangle \leq V$ generated by $H+P$ is invariant under the action of $\langle A'\rangle \leq G$ generated by $A'$; in particular, $\langle A'\rangle$ now acts on the quotient space $V/\langle H+P\rangle$.  This action is not necessarily trivial, but we can stabilise the most important portion of this action as follows.  More precisely, let $C \geq 1$ be a large integer (depending on $K,l$) to be chosen later, and consider the set $V \cap A^C$.  By Lemma \ref{proj}, this set has cardinality $O_C(|E|)$, and the sumset $(V \cap A^C) + H+P$ also has cardinality $O_C(|E|)$.  Since the coset progression $H+P$ itself has cardinality $O(|E|)$, we conclude that $V \cap A^C$ can be covered by $O_C(1)$ translates of $H+P$.  In particular, if we let $S \subset V / \langle H+P \rangle$ denote the image of $V \cap A^C$ under the quotient map $\pi_{H+P}: V \to V/\langle H+P\rangle$, we conclude that $|S| \ll_C 1$.

Let $\operatorname{Stab}(S)$ denote the subgroup of $\langle A'\rangle$ which fixes every single element of $S$.
We claim that this stabiliser group has large intersection with $(A')^2$, and more precisely that the set
$$ |\operatorname{Stab}(S) \cap (A')^2| \gg_C |A|.$$
To see this, consider the action of $A'$ on $S$.  Since $A' \subset A^{16}$, we see that the image of $S$ under this action must lie in the set $\pi_{H+P}(V \cap A^{C+O(1)})$, which has cardinality $O_C(1)$ by the same argument used to bound $|S|$.  Thus there are only $O_C(1)$ possible combinations for the way that a given element $a \in A'$ can act on $S$.  By the pigeonhole principle, we can thus find a subset $A''$ of $A'$ of size $|A''| \gg_C |A'| \sim |A|$ such that the action of any two elements of $A''$ on a single element of $S$ are identical.  This implies that any element of $A'' \cdot (A'')^{-1}$ must lie in $\operatorname{Stab}(S) \cap (A')^2$, and the claim follows.

Next, we claim that $(\operatorname{Stab}(S) \cap (A')^4)^2$ can be covered by $O(1)$ translates of $\operatorname{Stab}(S) \cap (A')^4$.  To see this, let $X \subset (\operatorname{Stab}(S) \cap (A')^4)^2$ be a maximal set such that the dilates $X \cdot (\operatorname{Stab}(S) \cap (A')^2)$ are disjoint.  This implies in particular that the dilates $X \cdot A'$ are disjoint.  But this dilates also lie in $(A')^9 \subset A^{144}$.  Since $|A^{144}|, |A'| \sim |A|$, we conclude that $|X| = O(1)$.  By maximality, we see that
$$ (\operatorname{Stab}(S) \cap (A')^4)^2 \subset X \cdot (\operatorname{Stab}(S) \cap (A')^2) \cap (\operatorname{Stab}(S) \cap (A')^2)^{-1} \subset X \cdot (\operatorname{Stab}(S) \cap (A')^4)$$
and the claim follows.  

Now let $B := \pi( \operatorname{Stab}(S) \cap (A')^4 )$.  From the above discussion we see that $(B,G')$ is a centred set of doubling constant $O(1)$.  (A key point here is that the doubling constant of $B$ is uniform in $C$.)  On the other hand, from Lemma \ref{proj} we have
$$ |B| \gg |\operatorname{Stab}(S) \cap (A')^4| / (|A|/|\pi(A)| ) \gg_C |\pi(A)|.$$
Since $G'$ is solvable of derived length at most $l-1$, we may apply the induction hypothesis and find a coset nilprogression $B' \subset B^{O(1)} \subset \pi(\operatorname{Stab}(S) \cap (A')^{O(1)})$ of derived length $l-1$, ranks $r_1,\ldots,r_{l-1}=O(1)$, and volume $\sim_C |\pi(A)|$ which $O(1)$-controls $B$, and in particular has cardinality $|B'| \gg_C |\pi(A)|$.

Observe that as $V$ is abelian, the action of $G$ on $V$ descends to an action of $G'$.  In particular, the elements of $B'$ act on $V$, and preserve $\langle H+P\rangle$, thus also act on $V/\langle H+P\rangle$.  By construction, all elements of $B'$ also preserve each element of $S$.

We have obtained a coset nilprogression $B'$ in $G'$ that relates to $\pi(A)$; we now need to build a coset nilprogression $A'$ in $G$ that is similarly related to $A$.  By Definition \ref{cosdef}, we have
$$ B' = \prod_{i=l-1}^{1} \prod_{j=0}^{r_i} \phi_{i,j}(H_{i,j})$$
for finite abelian groups $H_{i,0}$ and intervals $H_{i,j} = \{-N_{i,j},\ldots,N_{i,j}\}$ for $1 \leq i \leq l-1$ and $1 \leq j \leq r-1$ and maps $\phi_{i,j}: H_{i,j} \to G$, obeying the various properties in that definition.  

For each $1 \leq i \leq l-1$ and $0 \leq j \leq r_i$, the set $\phi_{i,j}(H_{i,j})$ is contained in $B'$, and hence in $\pi( \operatorname{Stab}(S) \cap (A')^{O(1)})$.  We may thus factor $\phi_{i,j} := \pi \circ \tilde \phi_{i,j}$ for some lifted map $\tilde \phi_{i,j}: H_{i,j} \to \operatorname{Stab}(S) \cap (A')^{O(1)}$; we can also ensure that $\tilde \phi_{i,j}(0) = 1$ for all $i,j$.  These maps $\tilde \phi_{i,j}$ will form the first $l-1$ levels of the coset nilprogression $A'$; we will fill in the final level of $A'$ shortly.

The various inclusions enjoyed by the $\phi_{i,j}$ in Definition \ref{cosdef} then lift to similar identities for $\tilde \phi$, modulo an error in $\ker(\pi) \equiv V$.  For instance, for any $1 \leq i \leq l-1$, $0 \leq j \leq r_i$, and $n,n' \in H_{i,j}$ with $n+n' \in H_{i,j}$ we have
$$ \tilde \phi_{i,j}(n) \tilde \phi_{i,j}(n') \in v_{i,j,n,n'} \cdot A_{l-1} \cdot \ldots \cdot A_{i+1} \cdot \tilde \phi_{i,j}(n+n')$$
for some $v_{i,j,n,n'} \in V$ where
$$ A_i := \tilde \phi_{i,0}(H_{i,0}) \cdot \ldots \cdot \tilde \phi_{i,r_i}(H_{i,r_i}).$$
Note that all factors other than $v_{i,j,n,n'}$ in the above inclusion are contained in $A^{O(1)}$, and thus $v_{i,j,n,n'} \in V \cap A^C$ if $C$ is large enough; thus
\begin{equation}\label{phi1}
 \tilde \phi_{i,j}(n) \tilde \phi_{i,j}(n') \in (V \cap A^C) \cdot A_{l-1} \cdot \ldots \cdot A_{i+1} \cdot \tilde \phi_{i,j}(n+n').
\end{equation}
In a similar spirit, we have
\begin{equation}\label{phi2}
[\tilde \phi_{i,j}(n), \tilde \phi_{i,j'}(n')] \in (V \cap A^C) \cdot A_{l-1} \cdot \ldots \cdot A_{i+1}
\end{equation}
for all $1 \leq i \leq l=1$, $0 \leq j,j' \leq r_i$, $n \in H_{i,j}$, and $n' \in H_{i,j'}$, and 
\begin{equation}\label{phi3}
[\tilde \phi_{i',j'}(n'), \tilde \phi_{i,j}(n)] \in (V \cap A^C) \cdot A_{\leq i-1} \cdot \tilde \phi_{i,0}(H_{i,0}) \cdot \ldots \cdot \tilde \phi_{i,\max(j-1,0)}(H_{i,\max(j-1,0)})
\end{equation}
for any $1 \leq i' < i \leq l$, $0 \leq j \leq r_i$, $0 \leq j' \leq r_{i'}$.

By definition of $S$, we have $\pi_{H+P}(V \cap A^C)$ for $1 \leq j \leq J$ lie in $S$.  In particular, for any $a \in \operatorname{Stab}(S) \cap \langle A' \rangle$ and $v \in V \cap A^C$, the action of $a$ on $v$ is trivial modulo $\langle H+P\rangle$, thus
$$(\rho(a)-1) (V \cap A^C) \subset \langle H+P \rangle.$$
On the other hand, we see from \eqref{aee2} that $\langle A' \rangle$ acts unipotently modulo $H$ on $\langle H+P\rangle$, in the sense that
$$ (\rho(a_1) - 1) \ldots (\rho(a_r)-1) \langle H+P \rangle \subset H$$
for all $a_1,\ldots,a_r \in \langle A' \rangle$.  Combining this with the previous observation, we thus have
$$ (\rho(a_1) - 1) \ldots (\rho(a_r)-1) (\rho(a_{r+1})-1) (V \cap A^C) \subset H$$
for all $a_1,\ldots,a_r \in A_{l-1} \cdot \ldots \cdot A_1$.  To exploit this, we define the sets $E_0,\ldots,E_{r+1} \subset V$ as
$$ E_j := H + \bigcup_{a_1,\ldots,a_j \in A_{l-1} \cdot \ldots \cdot A_1} (\rho(a_1)-1) \ldots (\rho(a_j)-1) (V \cap A^C),$$
thus $E_0 = V \cap A^C$, $E_j \subset V \cap A^{C+O(1)}$, $E_{r+1} = H$, and
\begin{equation}\label{eo}
 [a, v] \subset E_{\max(j+1,r+1)}
\end{equation}
for all $0 \leq j \leq r+1$, $a \in A_{l-1} \cdot \ldots \cdot A_1$, and $v \in E_j$.

We have
$$ E_j \subset V \cap A^{C+O(1)}; \quad H+P \subset V \cap A^{O(1)}$$
and thus
$$ |E_j + H+P| \ll |V \cap A^{C+O(1)}| \ll_C |A|/|\pi(A)|$$
thanks to Lemma \ref{proj}.  On the other hand, since $H+P$ contains $E$, we have $|H+P| \gg |A|/|\pi(A)|$.  We conclude that $E_j$ can be covered by $O_C(1)$ translates of $H+P$, indeed we may write
$$ E_j \subset \{ e_{j,1}, \ldots, e_{j,s_j} \} + H + P$$
for some $s_j = O_C(1)$ and $e_{j,1},\ldots,e_{j,s_j} \in E_j$, with $s_{r+1}=1$ and $e_{r+1,1}=0$.  From \eqref{eo} we see that
\begin{equation}\label{phi4}
[a,e_{j,i}] = e_{\max(j+1,r+1),i_{a,j,i}} + h_{a,j,i} +  n_{a,j,i,1} v_1 + \ldots + n_{a,j,i,r} v_{r}
\end{equation}
for all $a \in A_{l-1} \cdot \ldots \cdot A_1$, $0 \leq j \leq r+1$ and $1 \leq i \leq s_j$, and for some $1 \leq i_{a,j,i} \leq s_{\max(j+1,r+1)}$, some $h_{a,j,i} \in H$, and some integers $n_{a,j,i,k}$ with $|n_{a,j,i,k}| \ll N_k$ for $k=1,\ldots,r$.

We can now finish building the coset nilprogression $A'$.  Let $M > 1$ be a large integer (depending on $K,l,C$) to be chosen later.  We set
$$ r_l := r + s_r + \ldots + s_1$$
and define $H_{l,0},\ldots,H_{l,r_l}$ and the maps $\tilde \phi_{l,i}: H_{l,i} \to G$ as follows:
\begin{itemize}
\item $H_{l,0}$ is the finite abelian group $H$, and $\tilde \phi_{l,0}: H \to G$ is the inclusion map.
\item For $1 \leq i \leq r$, $H_{l,i}$ is the interval $\{ -M^{r-i+1} N_i, \ldots,M^{r-i+1} N_i\}$, and $\tilde \phi_{l,i}$ is the map $n \mapsto n v_i$.
\item For $0 \leq j \leq r$ and $1 \leq i \leq s_j$, if we set 
$$ i' := r + s_r + \ldots + s_{j+1} + i,$$
then $H_{l,i'}$ is the interval $\{ -1,0,1\}$, and $\tilde \phi_{l,i'}$ is the map $n \mapsto n e_{j,i}$.  
\end{itemize}
We set
$$ A_l := \tilde \phi_{l,0}(H_{l,0}) \cdot \ldots \cdot \tilde \phi_{l,r_l}$$
and
$$ A' := A_l \cdot \ldots \cdot A_1.$$
Using \eqref{phi1}, \eqref{phi2}, \eqref{phi3}, \eqref{phi4} and the abelian nature of $V$ (which contains all of $A_l$), we see (if $M$ is large enough) that $A'$ is a coset nilprogression of derived length $l$ and ranks $r_1,\ldots,r_l = O_C(1)$.  Also, from construction we have
$$ A' \subset A^{O_{M,C}(1)}.$$
Finally, from construction we also have
$$ |A'| \geq |B'| |H+P| \gg_C |\pi(A)| |E| \gg |A|,$$
Since $B'$ has volume $\sim_C |\pi(A)|$, $A'$ has volume
$$ \sim_{M,C} |\pi(A)| N_1 \ldots N_r |H| \sim |A|.$$
By Lemma \ref{small}(iii), we see that $(A')^{\pm 3}$ $O_{M,C}(1)$-controls $A$.  But from Corollary \ref{control-cor}, $A'$ $O_{C}(1)$-controls $(A')^{\pm 3}$.  We conclude that $A'$ $O_{M,C}(1)$-controls $A$, thus closing the induction.  This concludes the proof of Theorem \ref{frei-solv} assuming Proposition \ref{key-prop} when $G$ is not assumed to be totally torsion-free.

When instead $G$ is totally torsion-free, the abelian group $V$ is torsion-free and so the finite subgroup $H$ in that argument is trivial.  Also, $G/G^{(l)}$ is also totally torsion-free.  Thus in this case one can inductively eliminate the finite groups $H_{i,0}$ and maps $\phi_{i,0}$ from the coset nilprogression, replacing it by a nilprogression.

It remains to prove Proposition \ref{key-prop}.  This is the purpose of the next few sections of the paper.

\section{Obtaining a near-invariant set}

In the hypotheses of Proposition \ref{key-prop}, we have a set $E$ which expands by a multiplicative factor $K$ under the action of the set $A^4$.  The first step in the argument will be to modify $E$ and $A^4$ so that the expansion factor becomes extremely close to $1$, so that the (modification of the) set $E$ becomes almost invariant with respect to the action of (the modification of) $A^4$.  More precisely, we show

\begin{proposition}[Existence of near-invariant set]\label{near-invariant}  Let $G,V,K,A,E$ be as in Proposition \ref{key-prop}, and let $\eps > 0$.  Then there exists a set $E' \subset \rho(A^2) E$ with
$$ |E'| \gg_K |E|$$
and a centred subset $A' \subset A^4$ with
$$ |A'| \gg_{K,\eps} |A|$$
such that 
\begin{equation}\label{rhoke}
 |\rho(a)(E') \backslash E'| \leq \eps |E'|
\end{equation}
for all $a \in A'$.
\end{proposition}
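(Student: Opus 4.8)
The plan is as follows. By Lemma~\ref{small}(i) and Remark~\ref{control} we may assume that $A$ is a $K$-approximate group, and allow all constants to depend on $K$. Set $U := \rho(A^8)(2E)$; since $E$ is centred, $\rho(A^n)(E) \subseteq U$ for every $0 \le n \le 8$, and $|U| \le K|E|$ by \eqref{ess}. I would take $E' := \rho(A^2)(E)$ (or, if it turns out to be technically convenient, a suitable super-level set of the orbit‑multiplicity function $v \mapsto \#\{a \in A^2 : v \in \rho(a)(E)\}$, with the height chosen by a pigeonhole so as to also trim a thin boundary layer — this changes nothing below). This $E'$ satisfies $E' \subseteq \rho(A^2)(E)$ and $|E'| \ge |E|$ automatically. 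Put
$$ A' := \{1\} \cup \{\, a \in A^4 : |\rho(a)(E') \,\triangle\, E'| \le \eps |E'|\,\},$$
which lies in $A^4$, is centred (since $|\rho(a^{-1})E'\triangle E'| = |\rho(a)E'\triangle E'|$ as $A$ is symmetric), and every $a \in A'$ satisfies \eqref{rhoke}. Thus the entire content of the proposition reduces to the lower bound $|A'| \gg_{K,\eps} |A|$.

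This bound is exactly where the crude expansion factor $K$ coming from \eqref{ess} must be amplified to $1+\eps$, and it does not follow from soft counting: a Cauchy--Schwarz estimate only gives $\mathbf{E}_{a \in A^4} |\rho(a)(E') \cap E'| \gtrsim |E'|^2/|U| \gtrsim |E'|/K$, i.e.\ that a $\gtrsim 1/K$ fraction of $a \in A^4$ moves $E'$ by \emph{less than everything} — far weaker than moving it by less than $\eps|E'|$. Nonetheless the bound should hold: when $\rho$ is trivial it is immediate ($A' = A^4$), and in the model case of an interval $A \subset \mathbf{Z}$ acting unipotently (e.g.\ $\rho(a)\colon (x,y)\mapsto(x,y+ax)$ on $\mathbf{Z}^2$) with $E$ a box satisfying \eqref{ess}, one checks directly that $A'$ contains a subinterval of proportional length $\gg_\eps |A|$. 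The mechanism is that hypothesis \eqref{ess} precludes any ``expanding'' behaviour of the $A$-action on the scale of $E$, so that enough of $A^4$ acts slowly (morally, this is the quantitative virtual unipotence that Proposition~\ref{key-prop} will eventually make precise).

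The main obstacle is therefore proving $|A'| \gg_{K,\eps}|A|$, and my plan is to do so by an iteration. One maintains a centred set $E^{(k)} \subseteq \rho(A^2)(E)$ and a potential bounded in terms of $K$ — for instance $|\rho(A^4)(E^{(k)})|/|E^{(k)}|$, or the number of distinct sets $\rho(a)(E^{(k)})$, $a\in A^4$, counted up to small Hamming error — which decreases by a definite $(K,\eps)$-dependent amount at each step unless one already has the required large $A'$; starting from $E^{(0)} = \rho(A^2)(E)$ and losing only a constant factor in size per step, this terminates after $O_{K,\eps}(1)$ steps with $|E^{(k)}| \ge K^{-O_{K,\eps}(1)}|E|$. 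The delicate point is the single-step decrement: since averaging is too lossy, I expect to model the incidence relation ``$v \in \rho(a)(E^{(k)})$'' between $a \in A^4$ (or $A^2$) and $v \in U$ as a bipartite graph and apply the Szemer\'edi regularity lemma to extract a positive-density regular pair, from which a large sub-collection of $A^4$ acting near-identically on a large subset of $\rho(A^2)(E)$ can be read off — consistent with the tower-exponential final bounds flagged in the introduction. Keeping the refinements centred, inside the fixed set $\rho(A^2)(E)$, and of proportional size throughout is routine but necessary bookkeeping.
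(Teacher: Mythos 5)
You correctly locate the difficulty: with $E'$ fixed in advance, nothing forces a large fraction of $A^4$ to nearly preserve it, and a single Cauchy--Schwarz only gives a $\gg 1/K$ fraction of pairs with \emph{some} overlap, not overlap $\geq (1-\eps)|E'|$. But the iteration you sketch to bridge this is a genuine gap, for two reasons.

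First, the mechanism. You dismiss ``averaging'' as too lossy and propose instead to regularize the incidence graph between $A^4$ and $U := \rho(A^8)(2E)$ and run a potential-decrement loop, leaving the single-step decrement unproved. The paper's argument is an averaging argument, but a monotone one: after first applying the Balog--Szemer\'edi-type Proposition~\ref{bz-prop2} to extract a centred $D \subset A^2$, $|D| \gg_\eps |A|$, with the property that for each $k \leq k_0 \approx \eps^{-100}$ almost all $k$-fold products $d_1\cdots d_k$ land back in $A^2$, one sets $f_k := |D|^{-k}\sum_{d_1,\ldots,d_k\in D} 1_{\rho(d_1)\cdots\rho(d_k)E}$. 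Young's inequality makes $\|f_k\|_{\ell^2}$ non-increasing, while the containment forced by Proposition~\ref{bz-prop2} together with \eqref{ess} and Cauchy--Schwarz bounds $\|f_k\|_{\ell^2}$ below by $\gg_K |E|^{1/2}$; a single pigeonhole in $k < k_0$ then gives $\|f_{k+1}\|_{\ell^2} \geq (1-O(1/k_0))\|f_k\|_{\ell^2}$, which unwinds (via Cauchy--Schwarz and the parallelogram law) to the statement that for almost all pairs $(d,d')\in D^2$, the function $v\mapsto f_k(\rho(dd')v)$ is $\ell^2$-close to $f_k$. Finally $E'$ is taken to be a super-level set $\{f_k \geq \lambda\}$ with $\lambda \sim_K 1$ chosen (by a second pigeonhole, with window width $k_0^{-1/10}$) so that the near-threshold mass is small. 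That is the whole argument: Balog--Szemer\'edi, then one pigeonhole in $k$, then one pigeonhole in $\lambda$ --- no iterative refinement of $E$ and no ad hoc potential. Your proposal never brings in anything playing the role of Proposition~\ref{bz-prop2}, and without a substitute for it there is no reason the orbit sets $\rho(A^2)^k(E)$ stay inside $U$ so that the $\ell^1$ mass of $f_k$ remains concentrated --- that containment is exactly what allows amplifying the factor $K$ down to $1+O(1/k_0)$.

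Second, the size of $E'$. You write that the iteration terminates after $O_{K,\eps}(1)$ steps with $|E^{(k)}| \geq K^{-O_{K,\eps}(1)}|E|$, i.e.\ $|E'| \gg_{K,\eps}|E|$. The proposition asserts the $\eps$-uniform bound $|E'| \gg_K|E|$, and the paper's proof really delivers this: $k_0$ (hence $\eps$) enters only into how near-invariant $E'$ is (and into $|A'| \gg_{K,\eps}|A|$), while the size estimate $|E'| \gg_K |E|$ comes from the threshold $\lambda \sim_K 1$, which depends only on the ratio $\|f_k\|_{\ell^1}^2/\|f_k\|_{\ell^2}^2 \ll_K 1$, not on $k_0$. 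This uniformity is not cosmetic: it feeds into Proposition~\ref{good-coset}, where $|E'| \gg_K |E|$ is used to get a density $\delta \gg_{K,M}$ for the S\'ark\H{o}zy-type Proposition~\ref{sarkozy}; allowing an $\eps$-dependence there would tangle the dependency chain ($\eps$ must in turn be chosen small depending on $\tilde F$, which depends on $K,F$). So the loss you accept at this step is not harmless, and a correct proof must avoid it.
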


\begin{remark}\label{rhok} From the triangle inequality we see from \eqref{rhoke} that
$$ |\rho(a)(E') \backslash E'| \leq j\eps |E'|$$
for all $j \geq 1$ and all $a \in (A')^j$.
\end{remark}

The remainder of this section is devoted to the proof of this proposition.  We fix $K$ and allow all implied constants to depend on $K$.
We may assume that $|E|$ is sufficiently large depending on $K, \eps$, since otherwise we could take $A' := A$ and $E' := \{0\}$.

Let $k_0$ be the first integer larger than $\eps^{-100}$.
Applying Proposition \ref{bz-prop2} (with $\eps$ replaced by $k_0^{-100}$), we can thus find a centred set $D \subset A^2$ with $|D| \gg_{\eps} |A|$ such that for every $1 \leq k \leq k_0$, one can find at least $(1-k_0^{-100}) |D|^k$ tuples $(d_1,\ldots,d_k) \in D^{\otimes k}$ such that
$$ d_1 \ldots d_k \in A^2.$$
In particular, we have
$$ \rho(d_1) \ldots \rho(d_k)(E) \subset \rho(A^2)(E)$$
for a fraction $1 - O(k_0^{-100})$ of the tuples $(d_1,\ldots,d_k) \in D^{\otimes k}$.

To motivate the argument which follows, let us temporarily ``cheat'' by pretending that $\eps$ vanishes, thus
$$ \rho(D^k) (E) \subset \rho(A^2)(E)$$
for all $0 \leq k \leq k_0$.  On the other hand, the cardinalities $|\rho(D^k)(E)|$ are increasing in $k$ and vary between $|E|$ and $|\rho(A^2)(E)| = O(|E|)$ (thanks to \eqref{ess}).  Thus, by the pigeonhole principle, we can find a centred set $E' := \rho(D^k) E$ for some $0 \leq k < k_0$ such that we have the near-invariance property $|\rho(D)(E')| \leq (1 + O(\frac{1}{k_0})) |E'|$, as required (cf. the selection of $V$ from the increasing sequence of subspaces $W_1,W_2,\ldots$ in Section \ref{lamp-sec}).

We now give a variant of the above argument in which we do not need to pretend that $\eps$ vanishes.

\begin{lemma}[Existence of a near-invariant set]  Let the notation and assumptions be as above.  Then there exists a centred set $E' \subset \rho(A^2) E$ with $|E'| \sim |E| \sim |A|/|\pi(A)|$ such that $\rho(d d')(v) \in E'$ for $1-O(k_0^{-1/10})$ of the triplets $(d,d',v) \in D \times D \times E'$.
\end{lemma}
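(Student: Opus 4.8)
The strategy is to run the pigeonhole argument sketched in the paragraph just before the lemma, but carried out carefully with the $\eps$-error present, using an averaging (double-counting) argument rather than a clean containment. First I would record the consequence of Proposition \ref{bz-prop2}: the centred set $D \subset A^2$ with $|D| \gg_\eps |A|$ has the property that for every $1 \le k \le k_0$, all but an $O(k_0^{-100})$-fraction of tuples $(d_1,\dots,d_k) \in D^{\otimes k}$ satisfy $d_1 \cdots d_k \in A^2$, hence $\rho(d_1)\cdots\rho(d_k)(E) \subset \rho(A^2)(E)$; combined with \eqref{ess} this gives $|\rho(d_1)\cdots\rho(d_k)(E)| \le K|E|$ for such tuples, and trivially $|\rho(d_1)\cdots\rho(d_k)(E)| \ge |E|$ always (since each $\rho(d_i)$ is an automorphism).

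\textbf{The main step.} For each $0 \le k \le k_0$, define the quantity
$$
S_k := \frac{1}{|D|^k} \sum_{(d_1,\dots,d_k)\in D^{\otimes k}} |\rho(d_1)\cdots\rho(d_k)(E)|,
$$
the average size of a $k$-fold pushforward of $E$. By the above, $S_k$ lies between $|E|$ and $K|E| + O(k_0^{-100})|V\cap\ldots|$ — more carefully, since the bad tuples form an $O(k_0^{-100})$-fraction and each pushforward has size at most $|V|$, one has to be slightly careful, but one can instead restrict attention to the event that \emph{all} partial products $d_1\cdots d_j$ lie in $A^2$ (still a $1-O(k_0^{-99})$-fraction of tuples for $k \le k_0$, since $k_0 \sim \eps^{-100}$ is bounded in terms of $\eps$) and observe that on this event the pushforwards are nested-in-size and bounded by $K|E|$. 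Then $S_0 = |E|$ and $S_{k_0} \le K|E| + O(k_0^{-99})|E|\cdot\mathrm{poly} \ll_K |E|$, so the sequence $S_0 \le S_1 \le \cdots$ (monotonicity again from $\rho(d)$ being an automorphism, after restricting to the good event) cannot increase by a factor more than $(1+Ck_0^{-1})$ at \emph{some} step $k$, by pigeonholing: there is $0 \le k < k_0$ with $S_{k+1} \le (1 + C k_0^{-1}) S_k$. Fix this $k$.

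\textbf{Extracting $E'$ and the triple count.} Now pick, by averaging over $(d_1,\dots,d_k) \in D^{\otimes k}$ restricted to the good event, a specific tuple for which $E' := \rho(d_1)\cdots\rho(d_k)(E)$ satisfies both $|E'| \sim |E| \sim |A|/|\pi(A)|$ (it lies in $\rho(A^2)E$ and contains a translate of $E$, giving the two-sided bound) and, in an averaged sense, $\sum_{d,d'\in D} |\rho(dd')(E') \setminus E'|$ is small. The point is that $\rho(d)\rho(d')(E') = \rho(d_1)\cdots\rho(d_k)\rho(d)\rho(d')(E)$ for most $d,d'$ is of the form $\rho(d_1')\cdots\rho(d_{k+2}')(E)$ up to reindexing, and comparing $S_{k+2}$ (or rather the relevant two-step average from the level-$k$ set) with $S_k$ — using that we chose $k$ so that the increase over the next two steps is $O(k_0^{-1})$ — forces $\mathbb{E}_{d,d'}|\rho(dd')(E')\setminus E'| \ll k_0^{-1}|E'|$ plus the $O(k_0^{-99})$ contribution from bad tuples. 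Converting this $L^1$-bound on the symmetric-difference to a count of triples: the fraction of $(d,d',v) \in D\times D\times E'$ with $\rho(dd')(v) \notin E'$ is $\frac{1}{|D|^2|E'|}\sum_{d,d'}|\rho(dd')(E')\setminus E'| \ll k_0^{-1}$, which is comfortably $O(k_0^{-1/10})$. This yields the claimed $1 - O(k_0^{-1/10})$ of good triples.

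\textbf{Anticipated obstacle.} The delicate point is the bookkeeping of the ``bad tuple'' contributions: a bad tuple contributes a pushforward of size up to $|V|$, which is not controlled by $|E|$, so one cannot naively include it in the average $S_k$. The fix — and the step I expect to require the most care — is to work throughout on the event $\mathcal{G}_k$ that \emph{every} prefix $d_1\cdots d_j$, $1\le j\le k$, lies in $A^2$, show $\P(\mathcal{G}_k^c) = O(k k_0^{-100}) = O(k_0^{-99})$ for $k \le k_0$, and define $S_k$ as a conditional average over $\mathcal{G}_k$ (or equivalently replace $\rho(d_1)\cdots\rho(d_j)(E)$ by $E$ off the good event, which only decreases the average and preserves monotonicity). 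One then has to check that this truncation does not destroy the telescoping/monotonicity used in the pigeonhole, and that the final triple count only loses an additional $O(k_0^{-99}) \ll k_0^{-1/10}$ from triples $(d,d',v)$ whose associated $(k+2)$-tuple is bad. All of this is routine once set up correctly, but the order of quantifiers (choosing $k$ first via pigeonhole on averages, then choosing the tuple realising the average) is what makes the argument work without ``pretending $\eps$ vanishes''.
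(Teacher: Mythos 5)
Your proposal has a fatal flaw in the very first step: the averaged quantity
$$
S_k := \frac{1}{|D|^k} \sum_{(d_1,\dots,d_k)\in D^{\otimes k}} |\rho(d_1)\cdots\rho(d_k)(E)|
$$
is identically equal to $|E|$, because each $\rho(d_i)$ is an \emph{automorphism} of $V$ and hence a bijection, so $|\rho(d_1)\cdots\rho(d_k)(E)|=|E|$ for \emph{every} tuple. The sequence $S_k$ is therefore constant, the pigeonhole on it is vacuous, and the subsequent inference ``$S_{k+2}-S_k$ small $\Rightarrow \mathbb{E}_{d,d'}|\rho(dd')(E')\setminus E'|\ll k_0^{-1}|E'|$'' does not follow: bounds on the \emph{cardinalities} of the pushforwards say nothing about how much the pushforwards \emph{overlap}, and it is the overlap (a symmetric-difference estimate) that the lemma requires. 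Relatedly, choosing a single tuple $(d_1,\dots,d_k)$ and setting $E':=\rho(d_1)\cdots\rho(d_k)(E)$ cannot work: such an $E'$ has $|E'|=|E|$ but there is no reason whatever for it to be near-invariant under $\rho(dd')$. You seem to have misread the paper's motivational ``cheat'': the increasing set there is the \emph{union} $\rho(D^k)(E)=\bigcup_{(d_1,\dots,d_k)}\rho(d_1)\cdots\rho(d_k)(E)$, not a single pushforward, and even that union is abandoned in the real argument because a small fraction of bad tuples can blow it up.

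The paper's actual proof works in $L^2$ with the averaged density $f_k$ (obtained by $k$ applications of the Markov operator $f\mapsto \frac{1}{|D|}\sum_{d\in D} f\circ\rho(d)$ to $1_E$), whose $l^2$-norm is \emph{non-increasing} by Young's inequality and is pinned to $\sim|E|^{1/2}$ from both sides. Pigeonholing on the energies $\|f_k\|_{l^2}^2$, then using Cauchy--Schwarz, Markov, and the \emph{parallelogram law}, one converts ``energy nearly preserved at step $k$'' into ``$\|f_k\circ\rho(dd')-f_k\|_{l^2}^2\ll k_0^{-1/2}|E|$ for most pairs $(d,d')$,'' i.e.\ near-invariance of the \emph{function} $f_k$. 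Finally $E'$ is extracted as a super-level set $\{v\in\rho(A^2)(E):f_k(v)\geq\lambda\}\cup\{0\}$ at a threshold $\lambda\sim 1$ chosen (by another pigeonhole) so that the level band $\lambda-k_0^{-1/10}\leq f_k\leq\lambda$ has small measure; this is what makes the thresholded set robust under the small $L^2$ perturbation. The passage from an $L^2$ (or at least $L^1$) statement about an averaged density to a near-invariant set via thresholding is the essential content of the lemma, and is exactly what a pure cardinality count cannot supply. To repair your proposal you would need to replace $S_k$ by the $l^2$ energy of $f_k$ (or at minimum some quantity that measures overlap of pushforwards, e.g.\ $\sum_v f_k(v)^2$), and take $E'$ to be a level set rather than a single pushforward.
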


\begin{remark} The quantity $k_0^{-1/10}$ can be improved here, but any expression which decays to zero as $k_0 \to \infty$ would suffice here.  The parameter $\eps$ has served its purpose with this lemma and will not appear in the rest of the argument.
\end{remark}

\begin{proof}  
For each $0 \leq k \leq k_0$, define the functions $f_k: V \to \R^+$ recursively by setting $f_0 = 1_E$ to be the indicator of $E$, and
\begin{equation}\label{fkv}
 f_{k+1}(v) := \frac{1}{|D|} \sum_{d \in D} f_k(\rho(d) v).
\end{equation}
From Young's inequality we see that the $l^2$ norms $\|f_k\|_{l^2(V)} := (\sum_{v \in V} |f_k(v)|^2)^{1/2}$ are non-increasing in $k$, and in particular that
\begin{equation}\label{fke}
 \|f_k\|_{l^2(V)} \leq |E|^{1/2}.
\end{equation}
On the other hand, we clearly have
$$ \|f_k\|_{l^1(V)} := \sum_{v \in V} |f_k(v)| = |E|.$$
We can also expand
$$ f_k = \frac{1}{|D|^k} \sum_{d_1,\ldots,d_k \in D} 1_{\rho(d_1) \ldots \rho(d_k) E}$$
(recall that $D$ is centred).  By construction, all but $O(k_0^{-100} |D|^k)$ of the summands are supported in $E_*$, thus 
\begin{equation}\label{l1v}
\|f_k\|_{l^1(V \backslash E_*)} \ll k_0^{-100} |E|
\end{equation}
and thus (if $\eps$ is small enough)
\begin{equation}\label{l2v}
 \|f_k\|_{l^1(E_*)} \gg |E|
\end{equation}
and hence by Cauchy-Schwarz (and \eqref{ess})
$$ \|f_k\|_{l^2(V)} \gg |E|^{1/2}.$$
Applying the pigeonhole principle, we can thus find $0 \leq k < k_0$ such that
$$ \|f_{k+1}\|_{l^2(V)} \geq (1 - O(k_0^{-1})) \|f_k\|_{l^2(V)}.$$
Squaring this using \eqref{fkv}, we see that
$$ \frac{1}{|D|^2} \sum_{d,d' \in D} \sum_{v \in V} f_k(\rho(d) v) f_k(\rho(d') v) \geq (1 - O(k_0^{-1})) \|f_k\|_{l^2(V)}^2.$$
On the other hand, from Cauchy-Schwarz we have
$$ \sum_{v \in V} f_k(\rho(d) v) f_k(\rho(d') v) \leq \|f_k\|_{l^2(V)}^2$$
for all $d,d' \in D$.  By Markov's inequality, we conclude that for $1 - O(k_0^{-1/2})$ of the pairs $(d,d')\in D \times D$
$$ \sum_{v \in V} f_k(\rho(d) v) f_k(\rho(d') v) \geq (1 - O(k_0^{-1/2})) \|f_k\|_{l^2(V)}^2,$$
and thus by the parallelogram law
$$ \sum_{v \in V} |f_k(\rho(d) v) - f_k(\rho(d') v)|^2 \ll k_0^{-1/2} \|f_k\|_{l^2(V)}^2$$
and thus (by \eqref{fke} and the centred nature of $D$) we see that 
\begin{equation}\label{fdd}
\sum_{v \in V} |f_k(\rho(dd') v) - f_k(v)|^2 \ll k_0^{-1/2} |E|
\end{equation}
for $1-O(k_0^{-1/2})$ of the pairs $(d,d') \in D \times D$.

On the other hand, from \eqref{fke}, \eqref{l1v} we see that
$$ |E| \ll \sum_{v \in V: f_k(v) \sim 1} |f_k(v)|^2 \leq \sum_{v \in V} |f_k(v)|^2 \leq |E|.$$
By the pigeonhole principle, we may thus find a threshold $\lambda \sim 1$ such that
$$ \sum_{v \in V: f_k(v) \geq \lambda} |f_k(v)|^2 \gg |E|$$
and
\begin{equation}\label{sss}
 \sum_{v \in V: \lambda - k_0^{-1/10} \leq f_k(v) \leq \lambda} |f_k(v)|^2 \ll k_0^{-1/10} |E|.
 \end{equation}
Fix this threshold, and let
$$ E' := \{ v \in \rho(A^2)(E): f_k(v) \geq \lambda \} \cup \{0\}.$$
Observe that $E'$ is centred and contained in $E_*$; from \eqref{l1v} (and the crude bound $\|f_k\|_{l^\infty(V)} \leq 1$) we see (if $\eps$ is small enough) that
$$ \sum_{v \in E'} |f_k(v)|^2 \gg |E|$$
and thus (again using the crude bound $\|f_k\|_{l^\infty(V)} \leq 1$)
$$ |E'| \gg |E|$$
and thus (from \eqref{ess}) $|E'| \sim |E| \sim |A|/|\pi(A)|$ as required.

Let $(d,d')$ be one of the pairs for which \eqref{fdd} holds.  Then by Markov's inequality, we have $|f_k(\rho(dd')v)-f_k(v)|^2 \ll k_0^{-1/4}$ for $O( k_0^{-1/4} |E| )$ values of $v$.  Thus for $(1 - O(k_0^{-1/4}))|E'|$ choices of $v \in E'$, we have $v \neq 0$ and 
$$ f_k(\rho(dd')v) = f_k(v) + O(k_0^{-1/8})$$
and thus $f_k(\rho(dd') v)$ is either greater than $\lambda$, or between $\lambda$ and $\lambda - k_0^{-1/10}$ (for $k_0$ large enough).  By \eqref{sss}, the latter only occurs for $O( k_0^{-1/10} |E| )$ values of $v$.  If the former occurs, then by \eqref{l1v} we see that $\rho(dd') v$ lies in $E'$ for all but at most $O( k_0^{-100} |E| )$ values of $v$.  Setting $\eps$ small enough, we obtain the claim.
\end{proof}

Let $E'$ be as in the above lemma.  Then, by construction, we have
$$ \sum_{d, d' \in D} |\rho(dd') E' \backslash E'| \ll k_0^{-1/10} |D|^2 |E|$$
and thus by Markov's inequality
$$ |\{ (d,d') \in D \times D: |\rho(dd') E' \backslash E'| \geq k_0^{-1/20} |E| \}| \gg |D|^2$$
which implies
$$ |\{ d \in D^2: |\rho(d) E' \backslash E'| \geq k_0^{-1/20} |E'| \}| \gg |D|.$$
If we denote the set on the left-hand side by $D'$, then $D'$ is symmetric, $D' \subset D^2 \subset A^4$ and $|D'| \gg_{\eps} |A|$, and 
$$ |\rho(d) E \backslash E| \ll k_0^{-1/20} |E|$$
for all $d \in D'$, and the claim follows from the choice of $k_0$.

\section{Obtaining a coset progression}

The next step is to replace the set $E'$ obtained in Proposition \ref{near-invariant} with a pair of proper coset progressions.

\begin{proposition}[Locating a good coset progression]\label{good-coset}  Let $G,V,K,A,E$ be as in Proposition \ref{key-prop}, and $F: \R^+ \to \R^+$ be a function.  Then there exists $1 \leq M \ll_{K,F} 1$ and a $F(M)$-proper coset progression $H+P \subset V$ containing $E$ of rank $O_K(1)$ and size
\begin{equation}\label{hle}
 |H| |P| \ll_{K,M} |E|,
\end{equation}
a subgroup $H'$ of $H$ of index $|H/H'| \ll_{K,M} 1$, an integer $1 \leq l \ll_{K,M} 1$, and a centred subset $A'$ of $A^4$ with
$$ |A'| \gg_{K,F} |A|$$
such that
\begin{equation}\label{joke}
 \rho((A')^{F(M)})( H'+P_l ) \subset H+P,
\end{equation}
where the refinement $P_l$ of $P$ is defined in Lemma \ref{fstp}.  Furthermore, we have $H+P \subset C\rho(A^2)(E)$ for some $C=O_K(1)$.
\end{proposition}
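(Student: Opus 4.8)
The plan is to combine the near-invariant set provided by Proposition \ref{near-invariant} with the abelian Freiman theorem applied to $E$, and then to reconcile the two by passing to a sufficiently small refinement of the resulting coset progression.

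First I would extract the coset progression. Since $|\rho(A^8)(2E)| \leq K|E|$ forces $|2E| \leq K|E|$, the set $E$ has doubling $O_K(1)$; by Theorem \ref{gr2} together with the Ruzsa covering argument recorded immediately after it, $E$ lies in a coset progression of rank $O_K(1)$ and volume $O_K(1)|E|$ which is itself contained in $O_K(1)E$. As $A$ is an approximate group we have $1 \in A^2$, hence $E \subseteq \rho(A^2)(E)$, so this progression lies in $C\rho(A^2)(E)$ for some $C = O_K(1)$; this will be the last assertion. Passing it through the standard properification of coset progressions (first discard the dimensions that are too small, then enlarge to an $F(M)$-proper progression of the same rank, blowing up the volume by $O_{K,F(M)}(1)$, while staying within an $O_K(1)$-fold sumset of $E$) produces, for any choice of the parameter $M$, an $F(M)$-proper coset progression $H+P$ of rank $O_K(1)$, volume $O_{K,M}(1)|E|$, still containing $E$ and still inside $C\rho(A^2)(E)$; Lemma \ref{fstp} then supplies the nested chain of refinements $H+P \supseteq H+P_1 \supseteq H+P_2 \supseteq \cdots$, which remain proper over the bounded range of levels needed below.

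Next I would bring in near-invariance. Apply Proposition \ref{near-invariant} with $\eps$ small depending on $K$ and $F(M)$, obtaining a centred $E' \subseteq \rho(A^2)E$ with $|E'| \gg_K |E|$ and a centred $A'_0 \subseteq A^4$ with $|A'_0| \gg_{K,F}|A|$ such that $|\rho(a)(E')\backslash E'| \leq \eps|E'|$ for all $a \in A'_0$; by Remark \ref{rhok} this persists, with $\eps$ replaced by $F(M)\eps$, for all $a \in (A'_0)^{F(M)}$. Because $|A^n| = O_{K,n}(1)|A|$ for an approximate group, the set $T := \rho((A'_0)^{F(M)})(H+P)$ has volume $O_{K,M}(1)|E|$, hence is covered by $O_{K,M}(1)$ translates of $(H+P)-(H+P)$ by Ruzsa covering; in particular each $a \in (A'_0)^{F(M)}$, acting on $H+P$, realises only $O_{K,M}(1)$ distinct maps modulo $\langle H+P\rangle$. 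Pigeonholing over these action patterns, as in the passage from $A'$ to $A''$ in Section \ref{kps}, yields a centred $A' \subseteq A^4$ with $|A'| \gg_{K,F}|A|$ on which this quotient action is constant. Then near-invariance forces every $\rho(a)$ with $a \in (A')^{F(M)}$ to preserve $\langle H+P\rangle$ — an image straying outside would inflate $|\rho(a)(E')\backslash E'|$ past $F(M)\eps|E'|$, since $E'$ occupies a positive proportion of $\rho(A^2)(H+P)$ — so $F(M)$-properness lets one write $\rho(a)v_i = h_{a,i} + \sum_j n_{a,i,j}v_j$ with $h_{a,i}\in H$ and integer coefficients $n_{a,i,j} = O_{K,M}(1)$. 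Choosing $l = O_{K,M}(1)$ so that the lengths of $P_l$ are shrunk by more than these coefficients permit, and replacing $H$ by a suitable subgroup $H'$ of index $O_{K,M}(1)$, gives $\rho((A')^{F(M)})(H'+P_l)\subseteq H+P$, which is \eqref{joke}; the room this shrinking needs, weighed against the bound on the coefficients, is exactly what pins down $M = O_K(1)$, after which $F(M)$ and all the estimates become consistent.

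I expect the main obstacle to be this last step: making rigorous the assertion that near-invariance of $E'$ confines the images $\rho(a)v_i$ to $\langle H+P\rangle$ with $O_{K,M}(1)$-bounded coefficients, and threading this through the properness parameter $F(M)$, the refinement index $l$ and the subgroup $H'$, together with fixing the correct order of quantification ($M$ first, then $F(M)$, then $\eps$ in Proposition \ref{near-invariant}) so that $M \ll_{K,F}1$, $l \ll_{K,M}1$, $|H/H'| \ll_{K,M}1$ and $|A'| \gg_{K,F}|A|$ all hold simultaneously. The remaining ingredients — small doubling of $E$, the Green--Ruzsa input, the Ruzsa covering, and the pigeonhole over action patterns — are routine.
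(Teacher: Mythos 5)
Your opening moves track the paper: you pass from the smallness of $|\rho(A^8)(2E)|$ to a coset progression $H+P$ containing $E$ (the paper applies Theorem~\ref{gr-proper} directly to $\rho(A^2)(E)$ rather than to $E$, which is a cleaner way to also get $H+P \subset C\rho(A^2)(E)$ and properness in one stroke, but this is cosmetic), and you invoke Proposition~\ref{near-invariant} exactly as the paper does. The divergence, and the gap, is in the final step, which you yourself flag as the main obstacle.

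Two things go wrong in that step. First, the assertion that $T := \rho((A'_0)^{F(M)})(H+P)$ has volume $O_{K,M}(1)|E|$ is unjustified. You quote $|A^n| = O_{K,n}(1)|A|$, but that controls the size of a power of the approximate group inside $G$, not the size of the orbit $\rho(A^n)(H+P)$ inside $V$. The only hypothesis controlling orbits is \eqref{ess}, which bounds $|\rho(A^8)(2E)|$; since $(A'_0)^{F(M)} \subset A^{4F(M)}$ with $F(M)$ large, there is no a priori control on $|T|$. (This is precisely why Proposition~\ref{near-invariant} had to be proved: the hypothesis is scale-limited and cannot simply be iterated.) Second, even granting a pigeonholed $A'$ on which the action pattern modulo $\langle H+P\rangle$ is constant, the claim that near-invariance of $E'$ pins $\rho(a)v_i$ to lie in $\langle H+P\rangle$ with $O_{K,M}(1)$-bounded coefficients does not follow. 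Near-invariance controls where $\rho(a)$ sends \emph{elements of $E'$}; the generators $v_i$ of $P$ need not lie in $E'$ or in any bounded sum of $E'$ with itself, so there is simply no direct way to read off the action on $v_i$ from the near-invariance of $E'$. The fact that $E'$ has positive density in $H+P$ does not by itself force the generators into $E'-E'$ or any iterated sumset of controlled size.

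The missing ingredient is Proposition~\ref{sarkozy}, the S\'ark\"ozy-type theorem. Applied to the dense subset $E' \subset H+P$, it produces $H'$, $l$, and $m = O_{K,M}(1)$ such that \emph{every} element of $H'+P_l$ has $\gg_M |E'|^{2m}$ representations as $v_1+\cdots+v_m - w_1 - \cdots - w_m$ with $v_i, w_j \in E'$. This is where the generators of $P_l$ get expressed in terms of $E'$ --- and crucially, in \emph{many} ways. Now for $a \in (A')^{\tilde F(M)}$, applying $\rho(a)$ converts these into representations using $\rho(a)(E')$, which by near-invariance differs from $E'$ in only $O(\tilde F(M)\eps|E'|)$ elements. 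Because the number of representations is huge compared to the number that could possibly involve a bad element, at least one representation survives entirely inside $E'$, giving $\rho(a)(H'+P_l) \subset mE'-mE' \subset H+2mP$. The ``many representations'' robustness is what reconciles near-invariance with the need to control the action on progression generators; your covering-and-pigeonhole scheme has no substitute for it.
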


\begin{proof}  We allow all implied constants to depend on $K$.  Let $\eps > 0$ be a small number (depending on $K, F$) to be chosen later.  By Proposition \ref{near-invariant} (and Remark \ref{rhok}) we can find $E' \subset \rho(A^2)(E)$ with $|E'| \gg_K |E|$ and a centred set $A' \subset A^4$ such that
\begin{equation}\label{rhok2} |\rho(a)(E') \backslash E'| \leq j\eps |E'|
\end{equation}
for all $j \geq 1$ and all $a \in (A')^j$.

From \eqref{ess} we have $|2\rho(A^2)(E)| \ll |\rho(A^2)(E)|$.  Applying Theorem \ref{gr-proper}, we can place $\rho(A^2)(E)$ (and thus $E$ and $E'$) inside a $\tilde F(M)$-proper coset progression $H+P$ of rank $r = O_K(1)$ and size $|H| |P| = M |\rho(A^2)(E)|$ for some $1 \leq M \ll_{\tilde F} 1$, where $\tilde F$ is a function depending on $F$ and $K$ to be chosen later.  Furthermore we have $H+P \subset C \rho(A^2)(E)$ for some $C = O_{\tilde F}(1)$.

Applying Proposition \ref{sarkozy}, we can find a subgroup $H'$ of $H$ of cardinality $|H'| \gg_M |H|$ and $1 \leq m, l \ll_M 1$ such that every element of $H'+P_l$ has $\gg_M |E'|^{2m}$ representations of the form $v_1+\ldots+v_m-w_1-\ldots-w_m$ with $v_1,\ldots,w_m \in E'$.

The only property that has not yet been established is \eqref{joke}.  Let $a \in (A')^{\tilde F(M)}$.  Then every element of $\rho(a)(H'+P_l)$ has $\gg_M |E'|^{2m}$ representations of the form $v_1+\ldots+v_m-w_1-\ldots-w_m$ with $v_1,\ldots,w_m \in \rho(a)(E')$.  On the other hand, from \eqref{rhok2} we see that $\rho(a)(E')$ only differs from $E'$ by $O(\tilde F(M) \eps |E'|)$ elements.  Recall that $M = O_{\tilde F}(1)$.  Thus, if $\eps$ is sufficiently small depending on $\tilde F$ we see that every element of $\rho(a)(H'+P_l)$ can be represented in the form $v_1+\ldots+v_m-w_1-\ldots-w_m$ with $v_1,\ldots,w_m \in E'$.  Since $E' \subset H+P$, we conclude that $\rho(a)(H'+P_l) \subset H+2mP$.  The claim now follows by replacing $P$ by $2mP$ (and adjusting $M$ accordingly, and choosing $\tilde F$ sufficiently rapidly growing).
\end{proof}

The condition \eqref{joke} gives a tremendous amount of structural information on how $A'$ acts on $H+P$.  For instance, we have

\begin{corollary}[Existence of invariant torsion group]\label{itg}  Let the hypotheses, conclusion, and notation be as in Proposition \ref{good-coset}.  If $F$ is sufficiently rapidly growing (depending on $K$), then there exists a group $H' \leq H'' \leq H$ such that $\rho(a) H'' = H''$ for all $a \in A'$.  (In particular, $|H/H''| \ll_M 1$.)
\end{corollary}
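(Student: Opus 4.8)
The plan is to produce $H''$ as the subgroup of $V$ generated by the $\rho$-orbit of $H'$ under short words from $A'$. Two things need checking: that this orbit never leaves $H$, and that the generated group stops growing after boundedly many steps — the latter being forced because it is trapped between $H'$ and $H$, which have bounded index.

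The first step is to extract from \eqref{joke} that, since $0 \in P_l$, we have $H' \subseteq H'+P_l$, and hence $\rho(w)(H') \subseteq H+P$ for every $w \in (A')^{F(M)}$. The crucial upgrade is to replace $H+P$ by $H$ here. For this I would use that Proposition \ref{good-coset} allows us to take $H+P$ to be $F(M)$-proper with $F$ as rapidly growing as desired; a short properness computation then shows that any finite subgroup of $\langle H+P\rangle$ contained in $H+P$ must in fact lie in $H$. (Sketch: reduce modulo $H$, where the images of $v_1,\ldots,v_r$ remain "independent" in a suitably large range; if $\bar s = n_1\bar v_1 + \cdots + n_r\bar v_r \neq 0$ were a torsion element all of whose multiples $k\bar s$ still lay in the defining box $\{\sum m_i\bar v_i : |m_i|\le N_i\}$, then comparing the box-representation of $k\bar s$ with $\sum(kn_i)\bar v_i$ and invoking properness forces $|kn_i|\le N_i$ for every $k$ below $\operatorname{ord}(\bar s)$, after which the relation $\operatorname{ord}(\bar s)\cdot\bar s = 0$ together with one more application of properness forces all $n_i = 0$, a contradiction.) Since $\rho(w)$ is an automorphism of $V$, the set $\rho(w)(H')$ is a finite subgroup of $V$, so this yields $\rho(w)(H') \subseteq H$ for all $w \in (A')^{F(M)}$.

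Next I would set up the ascending chain $H^{(k)} := \langle \rho(w)(H') : w \in (A')^k\rangle$. As $A'$ is centred, $H^{(0)} = H'$, and for $k \le F(M)$ each $H^{(k)}$ is a subgroup of $H$ containing $H'$; moreover $\rho(b)(H^{(k)}) \subseteq H^{(k+1)}$ for every $b \in A'$, simply by applying the automorphism $\rho(b)$ to the generating set (note $b(A')^k \subseteq (A')^{k+1}$). Writing $C_0 := |H/H'| \ll_{K,M} 1$, every strict inclusion $H^{(j)} \subsetneq H^{(j+1)}$ at least doubles the order, so if no repetition had occurred among $H^{(0)},\ldots,H^{(k)}$ we would have $2^k \le C_0$; hence $H^{(k_0)} = H^{(k_0+1)}$ for some $k_0 \le \log_2 C_0$. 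Since the bound on $C_0$ in terms of $M$ depends only on $K$ (while $M$ depends on $K,F$), choosing $F$ sufficiently rapidly growing in $K$ guarantees $k_0 + 1 \le F(M)$, so the whole chain up to this point lives inside $H$. Now put $H'' := H^{(k_0)}$, so that $H' \le H'' \le H$. For $b \in A'$ we get $\rho(b)(H'') \subseteq H^{(k_0+1)} = H^{(k_0)} = H''$, and the same inclusion applied with $b^{-1} \in A'$, after acting by $\rho(b)$, yields the reverse inclusion; hence $\rho(b)(H'') = H''$. Finally $|H/H''| \le |H/H'| \ll_M 1$, as claimed.

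The main obstacle is the middle step: passing from $\rho(w)(H') \subseteq H+P$ to $\rho(w)(H') \subseteq H$. This is precisely where the strong properness of $H+P$ (hence the freedom to take $F$ rapidly growing) is genuinely used, and where one must be careful about how torsion in $\langle P\rangle$ can sit inside the box defining $P$. Everything else is soft — a stabilisation argument for a chain of subgroups squeezed into a quotient of bounded order, plus the observation that a generated-group construction of this type is automatically $A'$-invariant once it stabilises.
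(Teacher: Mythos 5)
Your proof is correct and follows essentially the same route as the paper: extract $\rho(w)(H')\subseteq H+P$ from \eqref{joke}, upgrade to $\rho(w)(H')\subseteq H$ via properness (the paper leaves this step terse; your induction on $k$ spells it out, and in fact once it is run carefully one sees that $2$-properness already suffices, since the bound $|kn_i|\le N_i$ for all $j\le k$ plus $2$-properness bootstraps to $|(k+1)n_i|\le N_i$, so $n_i=0$), then run the pigeonhole/stabilisation argument on the ascending chain $\langle\rho((A')^j)(H')\rangle$ squeezed between $H'$ and $H$. The one place your commentary is slightly off is in locating where the rapid growth of $F$ is "genuinely" needed — it is the pigeonhole step (ensuring the chain stabilises within $F(M)$ steps, i.e.\ $F(M)\gtrsim\log|H/H'|$ where $|H/H'|\ll_{K,M}1$), not the properness upgrade, that requires $F$ to outrun a $K$-dependent function of $M$.
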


\begin{proof}  From \eqref{joke} we see that for any $a \in (A')^{F(M)}$, $\rho(a)(H')$ is a finite subgroup in $H+P$.   Since $H+P$ is $F(M)$-proper, we conclude (if $F(M)$ is large enough) that $\rho(a)(H') \subset H$, thus $\rho( (A')^j )(H') \subset H$ for all $0 \leq j \leq F(M)$.

Now consider the groups $\langle \rho( (A')^j )(H') \rangle$.  The order of each such group is a multiple of the order of the previous group, and this order lies between $|H'|$ and $|H|$ for $0 \leq j \leq F(M)$.  Since $|H'| \gg_{K,M} |H|$, we thus conclude from the pigeonhole principle (if $F$ is sufficiently rapidly growing) that there exists $0 \leq j < F(M)$ such that
$$ \langle \rho( (A')^j )(H') \rangle =  \langle \rho( (A')^{j+1} )(H') \rangle.$$
Setting $H'' := \langle \rho( (A')^j )(H') \rangle$ we obtain the claim.
\end{proof} 

Corollary \ref{itg} partially describes the action of $A'$ on the $H$ component of the coset progression $H+P$.  Now we turn attention to the $P$ component.

\begin{corollary}[Virtually unipotent action modulo $H$]\label{itv}  Let the hypotheses, conclusion, and notation be as in Proposition \ref{good-coset}.  We express $P$ as
$$ P = \{ n_1 v_1 + \ldots + n_r v_r: n_i \in \Z, |n_i| \leq N_i \hbox{ for all } 1 \leq i \leq r \}.$$
Then if $F$ is sufficiently rapidly growing (depending on $K$), then for each $1 \leq i \leq r$ with $N_i \geq l^2$, and every $a \in A'$, we have
$$ \rho(a) (l v_i) = h_{a,i} + \sum_{j=1}^r n_{a,i,j} v_j$$
where $h_{a,i} \in H$ and each $n_{a,i,j}$ is an integer with
$$ |n_{a,i,j}| \ll_K N_j/N_i.$$
\end{corollary}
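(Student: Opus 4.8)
The plan is to read a representation of $\rho(a)(l v_i)$ off \eqref{joke}, and then to sharpen the bounds on its coefficients by a bootstrap that plays the $F(M)$-properness of $H+P$ against the additivity of $\rho(a)$.

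First I would record the basic containment. Since $N_i \geq l^2$, the element $l v_i$ lies in the refinement $P_l$ of $P$ from Lemma \ref{fstp}, and hence in $H'+P_l$ (as $0 \in H'$); more generally, by the construction of $P_l$ one has $k\, l v_i \in P_l$ for every integer $|k| \leq K_0$, where $K_0 \geq 1$ and $K_0 \gg N_i/l^2$. As $A'$ is centred, $A' \subset (A')^{F(M)}$, so \eqref{joke} gives $\rho(a)(k\, l v_i) \in H+P$ for every $a \in A'$ and every $|k| \leq K_0$. Taking $k=1$ and expanding the element $\rho(a)(l v_i)$ inside the (proper) coset progression $H+P$ yields the claimed form $\rho(a)(l v_i) = h_{a,i} + \sum_{j=1}^r n_{a,i,j} v_j$ with $h_{a,i} \in H$, but for the moment only with the crude bound $|n_{a,i,j}| \leq N_j$.

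The core of the argument is to improve $|n_{a,i,j}| \leq N_j$ to $|n_{a,i,j}| \leq N_j/k$ for all $j$, by induction on $k$ from $1$ to $K_0$; the base case $k=1$ is what was just shown. For the inductive step, fix $2 \leq k \leq K_0$ and suppose the bound holds for $k-1$. Since $\rho(a)$ is additive,
$$ \rho(a)(k\, l v_i) = k\,\rho(a)(l v_i) = k h_{a,i} + \sum_{j=1}^r (k\, n_{a,i,j}) v_j, $$
where $k h_{a,i} \in H$ because $H$ is a group, and where $|k\, n_{a,i,j}| \leq k N_j/(k-1) \leq 2 N_j \leq F(M) N_j$ by the inductive hypothesis (using that $F$, hence $F(M)$, is at least $2$, which is part of the hypothesis that $F$ grows rapidly enough). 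Thus the right-hand side is a legitimate expansion of $\rho(a)(k\, l v_i)$ in the $F(M)$-proper coset progression $H+P$; comparing it with the canonical expansion of $\rho(a)(k\, l v_i) \in H+P$ (whose $v_j$-coefficients automatically have modulus at most $N_j$) and invoking $F(M)$-properness forces $k\, n_{a,i,j}$ to be that coefficient, so $|k\, n_{a,i,j}| \leq N_j$. Pushing the induction to $k=K_0$ gives $|n_{a,i,j}| \leq N_j/K_0 \ll l^2 N_j/N_i$, and since $l \ll_{K,M} 1$, $M \ll_{K,F} 1$, and $F$ depends only on $K$, the right-hand side is $\ll_K N_j/N_i$, as required.

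I expect the main obstacle to be the apparent circularity of this bootstrap: $F(M)$-properness only controls representations whose $v_j$-coefficients are $O(F(M) N_j)$, yet we wish to use it to bound the coefficients $n_{a,i,j}$ themselves. Incrementing $k$ by a single unit at a time is precisely what breaks the circle — the coefficients $k\, n_{a,i,j}$ never grow past $2N_j$ along the way — and it is also why $N_i \geq l^2$ is the right threshold, being exactly the condition under which $l v_i$, together with a nonempty range of its multiples, sits inside $P_l$. The remaining ingredients are routine bookkeeping: uniqueness of the decomposition of an element of the proper progression $H+P$ into its $H$-part and its $P$-part, and closure of the group $H$ under the integer multiples appearing above.
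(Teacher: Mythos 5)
Your proposal is correct and matches the paper's argument: the paper likewise reads off a crude expansion $\rho(a)(lv_i)=h_{a,i}+\sum_j n_{a,i,j}v_j$ with $|n_{a,i,j}|\leq N_j$ from \eqref{joke}, then uses the containment $n_i\,\rho(a)(lv_i)\in H+P$ over a range of multiples $n_i$ together with $2$-properness of $H+P$ to force the improved coefficient bound. The paper states this tersely (``Applying this (and using the fact that $H+P$ is $2$-proper) we obtain the claim''), and your bootstrap, incrementing $k$ one step at a time so that $|k\,n_{a,i,j}|\leq 2N_j$ always stays within the properness window, is precisely the mechanism that makes that sentence rigorous.
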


\begin{proof}  From \eqref{joke} we see that for any $a \in A'$, $\rho(a)(P_l)$ is a progression in $H+P$.  In particular, we have
$$ \rho(a)(l v_i) = h_{a,i} + \sum_{j=1}^r n_{a,i,j} v_j$$
for some $h_{a,i} \in H$ and some integers $n_{a,i,j}$ with $|n_{a,i,j}| \leq N_j$.  Furthermore, we have $n_i \rho(a)(l v_i) \in H+P$ for all $1 \leq n_i \leq N_i/l$.  Applying this (and using the fact that $H+P$ is $2$-proper) we obtain the claim.
\end{proof}

By repeatedly exploiting the pigeonhole principle, we can improve the conclusion of Corollary \ref{itv} by refining $A'$ by a constant factor:

\begin{proposition}[Genuinely unipotent action modulo $H''$]\label{itg2}  Let the hypotheses, conclusion, and notation be as in Proposition \ref{good-coset}, Corollary \ref{itg}, and Corollary \ref{itv}.  We arrange the dimensions $N_i$ of $P$ in decreasing order,
$$ N_1 \geq N_2 \geq \ldots \geq N_r.$$
Then there exists a centred set $A'' \subset (A')^2 \subset A^8$ with $|A''| \gg_K |A|$ such that for every $a \in A''$ and each $1 \leq i \leq r$ with $N_i \geq l^2$, we have
\begin{equation}\label{rhoo}
 \rho(a) (l v_i) = h_{a,i} + \sum_{j=1}^{i-1} n_{a,i,j} l v_j + l v_i
\end{equation}
where $h_{a,i} \in H''$ and each $n_{i,a,j}$ is an integer with
$$ |n_{a,i,j}| \ll_K N_j/N_i.$$
\end{proposition}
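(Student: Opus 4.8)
The plan is to package the action of each $a\in A'$ on the ``top'' of the coset progression $H+P$ into a bounded amount of combinatorial data, show that this data composes multiplicatively, and then pigeonhole on $A'$ and pass to a difference set in order to force the data to take its trivial value.

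First I would fix notation. Since the set $A''$ that we produce will live inside $(A')^2$, and since $(A')^2\subseteq (A')^{F(M)}$ as soon as $F(M)\ge 2$, both Corollary \ref{itg} and Corollary \ref{itv} apply to every element of $(A')^2$. Thus for $a\in(A')^2$ and $1\le i\le r$ with $N_i\ge l^2$ we may write $\rho(a)(lv_i)=h_{a,i}+\sum_{j=1}^r n_{a,i,j}v_j$ with $h_{a,i}\in H$ and $|n_{a,i,j}|\ll_K N_j/N_i$, and $\rho(a)$ preserves $H''$; moreover $\rho(a)$ maps $H$ into $H$, since $\rho(a)(H)$ is a finite subgroup lying inside the very proper coset progression $H+P$. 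Because $N_1\ge\cdots\ge N_r$, the coefficients $n_{a,i,j}$ with $j\ge i$ satisfy $|n_{a,i,j}|\ll_K 1$. I then define the \emph{data} $\delta(a)$ of such an $a$ to consist of the integers $n_{a,i,j}$ with $j\ge i$, the residues of $n_{a,i,j}$ modulo $l^{C_0}$ for all $i,j$ (where $C_0=O_K(1)$ is a large constant fixed below), the classes $h_{a,i}\bmod H''$, and the automorphism of $H/H''$ induced by $\rho(a)$. Since $l$ and $|H/H''|$ are $O_K(1)$, the data $\delta(a)$ takes only $O_K(1)$ possible values.

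Next I would prove that $\delta$ is multiplicative. The key computation evaluates $\rho(ab)(l^2v_i)$ in two ways: directly, via Corollary \ref{itv} applied to $ab$; and as $\rho(a)\bigl(l\cdot\rho(b)(lv_i)\bigr)$, substituting the expansion of $\rho(b)(lv_i)$ and then the expansions of each $\rho(a)(lv_k)$ and of $\rho(a)(lh)$ for the relevant $h\in H$ --- the spare factor of $l$ being exactly what lets one feed the controlled elements $lv_k$, rather than the uncontrolled $v_k$, into $\rho(a)$. Comparing the coefficients of each $v_j$ and the $H$-parts, and using that $H+P$ is $F(M)$-proper with $F(M)$ chosen much larger than any coefficient that can occur (all are $\ll_K N_j+l$), one obtains the bilinear identities $l\,n_{ab,i,j}=\sum_k n_{b,i,k}\,n_{a,k,j}$ together with a matching bilinear formula for $h_{ab,i}\bmod H''$ in terms of the data of $a$ and $b$ and the automorphism $\rho(a)|_{H/H''}$. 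For $C_0$ large enough (depending only on $K$) these identities show that $\delta(ab)$ is a fixed function of $(\delta(a),\delta(b))$ whenever $a,b,ab\in(A')^2$; indeed they are precisely the multiplication law of a finite group $\Gamma$ of order $O_K(1)$, so that $\delta$ behaves, on products of boundedly many elements of $(A')^2$, like the restriction of a homomorphism into $\Gamma$. Unwinding the definitions, the identity element of $\Gamma$ corresponds exactly to $n_{a,i,i}=l$, $n_{a,i,j}=0$ for $j>i$, $l\mid n_{a,i,j}$ for $j<i$, and $h_{a,i}\in H''$, i.e.\ to the conclusion \eqref{rhoo}; note also that the identity element of $G$ has this trivial data.

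Finally I would pigeonhole and symmetrize. Partition $A'$ according to the value of $\delta$ and choose a cell $A_1$ with $|A_1|\gg_K|A'|/|\Gamma|\gg_K|A|$ on which $\delta$ is constant, equal to some $\gamma_0\in\Gamma$. Put $A'':=A_1\cdot A_1^{-1}$. Then $A''$ is symmetric and contains the identity of $G$, hence is centred; $A''\subseteq A'\cdot(A')^{-1}=(A')^2\subseteq A^8$ (using that $A'$ is centred); and $|A''|\ge|A_1|\gg_K|A|$. Every $a\in A''$ has the form $cd^{-1}$ with $c,d\in A_1$, and applying the composition law to the product $(cd^{-1})\cdot d=c$ forces $\delta(cd^{-1})=\delta(c)\delta(d)^{-1}=\gamma_0\gamma_0^{-1}=1_\Gamma$, which is exactly the assertion \eqref{rhoo}; the coefficient bound $|n_{a,i,j}|\ll_K N_j/N_i$ is inherited from Corollary \ref{itv} since $A''\subseteq(A')^2$, and this completes the proof. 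I expect the main obstacle to be the composition law above: one must be disciplined about only ever applying $\rho(a)$ to the controlled sublattice generated by the $lv_k$ and by $lH$ (never to a bare $v_k$), which forces the passage through $l^2v_i$; one must take the function $F$ in Proposition \ref{good-coset} growing rapidly enough that $H+P$ genuinely separates all the combinations $\sum_j n_jv_j$ with $|n_j|=O_K(N_j)$ that arise, so that the coefficient identities --- including those modulo $l^{C_0}$, which are what carry the divisibility of $n_{a,i,j}$ by $l$ for $j<i$ --- can be read off unambiguously; and one must check that $\rho(a)|_{H/H''}$ is well defined (using $\rho(a)H=H$ and $\rho(a)H''=H''$) and enters the $H$-part formula compatibly with composition. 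These are bookkeeping issues rather than conceptual ones, but they are exactly the sort of thing the adjustable properness function $F$ was introduced to absorb.
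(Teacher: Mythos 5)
Your plan is a genuinely different route from the paper's. The paper proceeds iteratively: it pigeonholes on the value of $\rho(a)(lv_1)$ modulo $H''$ to pass from $A'$ to a cell $A_1$, then on $\rho(a)(lv_2)$ modulo $H''+l\Z v_1$ to pass to $A_2\subset(A')^2$, and so on, with the crucial move at stage $i$ being the exact rearrangement
$\rho(a)(lv_i)-\rho(a')(lv_i)=\rho(a')\bigl(h+\sum_{j<i}m_j\,lv_j\bigr)$,
which is valid because the action of $\rho(a')$ on $H''$ and on $lv_1,\dots,lv_{i-1}$ has already been pinned down at earlier stages, so one can apply $\rho(a')^{-1}$ literally and read off $\rho((a')^{-1}a)(lv_i)=h+\sum_{j<i}m_j\,lv_j+lv_i$. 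Your proposal instead packages everything into a single ``data'' map $\delta$, pigeonholes once, and symmetrizes.

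The gap is in the claim that $\delta(ab)$ is a fixed function of $(\delta(a),\delta(b))$ and that this function is a group law. Your own composition identity $l\,n_{ab,i,j}=\sum_k n_{b,i,k}\,n_{a,k,j}$ already shows this fails for the modular part of the data: knowing the entries of $a$ and $b$ modulo $l^{C_0}$ determines the right-hand side only modulo $l^{C_0}$, hence determines $n_{ab,i,j}$ only modulo $l^{C_0-1}$. One power of $l$ is lost at each composition, so $\delta$ is not a partial homomorphism into any finite set $\Gamma$ with a fixed multiplication, and there is no identity element in any honest sense. The cancellation step $\delta(cd^{-1})=\delta(c)\delta(d)^{-1}=1_\Gamma$ therefore has no justification; one would instead need to solve, for each fixed $j$, the linear system $\sum_k n_{d,i,k}\bigl(n_{cd^{-1},k,j}-l\delta_{kj}\bigr)=l(n_{c,i,j}-n_{d,i,j})$, and the coefficient matrix $(n_{d,i,k})_{i,k}$ is neither square (it is indexed only by $i$ with $N_i\ge l^2$) nor known to be invertible over $\Z$ (the diagonal entries $n_{d,i,i}$ are merely $O_K(1)$, a priori possibly $0$). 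The paper's iterative refinement is precisely what avoids ever having to invert this matrix: each step only uses the already-established triangular structure in the lower indices, and the inversion is carried out by applying the automorphism $\rho(a')^{-1}$ itself rather than by manipulating coefficient matrices.

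There is also a secondary issue you wave at but do not resolve: the expansion of $\rho(a)(lv_k)$ from Corollary \ref{itv} is only available when $N_k\ge l^2$, yet the composition $\rho(a)\bigl(\sum_k l\,n_{b,i,k}v_k\bigr)$ requires applying $\rho(a)$ to $l v_k$ for all $k$ with $n_{b,i,k}\ne 0$, and such $k$ with $N_k<l^2$ can occur since the bound $|n_{b,i,k}|\ll_K N_k/N_i$ permits nonzero values. To repair your approach one would effectively have to re-introduce the triangular, index-by-index processing that the paper uses, at which point the two arguments coincide.
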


\begin{remark} One corollary of \eqref{rhoo} is that
$$ (\rho(A'')-1) \ldots (\rho(A'')-1) (H''+P_l) \subset H''$$
where $(\rho(A)-1)(E) := \{ \rho(a) v - v: a \in A, v \in E \}$.  Thus, in some sense, the action of $A''$ on $H''+P_l$ is unipotent modulo $H''$.
\end{remark}

\begin{proof}  We allow all implied constants to depend on $K$, thus for instance $l=O(1)$ and $r=O(1)$, and $|H/H''| = O(1)$.

We begin by analysing the action of $A'$ on $lv_1$.  From Corollary \ref{itv} and the decreasing nature of $N_i$ we see that the possible values of $\rho(a)(lv_1)$ as $a$ ranges over $A'$ can be covered by $O(1)$ translates of $H''$.  Thus, by the pigeonhole principle, we can find a subset $A_1$ of $A'$ with $|A_1| \gg |A'| \gg |A|$ such that $\rho(a)(l v_1) - \rho(a')(lv_1) \in H''$ for all $a,a' \in A_1$.  Since the action of $A'$ leaves the finite group $H''$ invariant, we may thus write
$$ \rho(a)(l v_1) - \rho(a')(lv_1) = \rho(a')(h_{a,a'}^{(1)})$$
for some $h_{a,a'}^{(1)} \in H''$.  We can rearrange this as
$$ \rho((a')^{-1} a)(lv_1) = h_{a,a'}^{(1)} + lv_1.$$
We thus conclude that for every $a$ in the centred set $A_1^{-1} \cdot A_1$, we have
\begin{equation}\label{rhoo2}
\rho(a)(lv_1) = h_a^{(1)} + lv_1
\end{equation}
for some $h_a \in H''$; note that this is the $i=1$ version of \eqref{rhoo}.

Now we analyse the action of $A_1$ on $lv_2$.  From Corollary \ref{itv} we see that the possible values of $\rho(a)(lv_2)$ as $a$ ranges over $A_1^{-1} \cdot A_1$ can be covered by $O(1)$ translates of $H'' + \{ l n_1 v_1: |n_1| \leq N_1/N_2 \}$.  Thus, by the pigeonhole principle, we may find a subset $A'_2$ of $A_1^{-1} \cdot A_1$ with $|A'_2| \gg |A|$ such that
$$ \rho(a)(l v_2) - \rho(a')(lv_2) \in H'' + \{ l n_1 v_1: |n_1| \leq 2N_1/N_2 \}$$
for all $a,a' \in A'_2$.  From \eqref{rhoo2} (and the fact that $\rho(a')$ preserves $H''$) we thus have
$$ \rho(a)(l v_2) - \rho(a')(lv_2) = \rho(a')( h_{a,a'}^{(2)} + n_{a,a',1}^{(2)} l v_1 )$$
for some $h_{a,a'}^{(2)} \in H''$ and an integer $n_{a,a',1}^{(2)}$ with $|n_{a,a',1}^{(2)}| \ll N_1/N_2$; we rearrange this as
$$ \rho((a')^{-1} a)(lv_2) = h_{a,a'}^{(2)} + n_{a,a',1}^{(2)} l v_1 + lv_2.$$
Thus for every $a \in (A'_2)^{-1} \cdot A'_2$, we have
\begin{equation}\label{rhov2}
\rho(a)(lv_2) = h_{a}^{(2)} + n_{a,1}^{(2)} l v_1 + lv_2
\end{equation}
for some $h_a^{(2)} \in H''$ and some integer $n_{a,1}^{(2)}$ of size $O(N_1/N_2)$.  This is not quite \eqref{rhoo} for $i=2$, because the set $(A'_2)^{-1} \cdot A'_2$ lives in $(A')^4$ rather than $(A')^2$.  But this can be easily fixed as follows\footnote{The same fix would allow one to substantially lower the exponents $8$ and $16$ in Proposition \ref{key-prop}; we omit the details.}.  Since $|A'_2|, |A_1^{-1} \cdot A_1|, |A_1| \sim |A|$ and $A'_2 \subset A_1^{-1} \cdot A_1$, we see from the pigeonhole principle that there exists $a_1 \in A_1$ such that 
$$ |A'_2 \cap ( a_1^{-1} \cdot A_1 )| \gg |A|.$$
Thus if we set $A_2 := (a_1 \cdot A'_2) \cap A_1$, then $A_2 \subset A_1 \subset (A')^2$ and $|A_2| \gg |A|$.  Also, we have $A_2^{-1} \cdot A_2 \subset (A'_2)^{-1} \cdot A'_2$, thus \eqref{rhov2} now holds for all $a \in A_2^{-1} \cdot A_2$.

We can continue in this fashion, repeatedly using the pigeonhole principle to build a nested sequence of sets $A_1 \supset A_2 \supset \ldots \supset A_r$ which obey more and more cases of \eqref{rhoo}.  Since $r=O(1)$, the final set $A_r$ will still have cardinality $\gg |A|$, and the claim follows (taking $A'' := A_r^{-1} \cdot A_r$).
\end{proof}

\section{Conclusion of the argument}

We are now ready to prove Proposition \ref{key-prop}.  Let $G,V,\rho,K,A,E$ be as in that proposition, let $F: \R^+ \to \R^+$ be a suitably rapidly growing function (depending on $K$), let $H, P, l, H''$ be as in Proposition \ref{good-coset} and Corollary \ref{itg}, and let $A'' \subset A^8$ be the set in Proposition \ref{itg2}.  We allow all implied constants to depend on $K$, thus $P$ has rank $r=O(1)$, $l=O(1)$, $|E| \sim |H+P| \sim |H''+P_l|$, and $|H/H''| = O(1)$.

By construction, we have
$$ E, H''+P_l \subset H+P.$$
We conclude that there exist $e_1,\ldots,e_J \in E$ with $J=O(1)$ such that
$$ E \subset \{e_1,\ldots,e_J\} + H'' + P_l.$$
By the pigeonhole principle, we can find $1 \leq j \leq J$ such that
$$ |E \cap (e_j + H'' + P_l)| \gg |E|.$$
From \eqref{ess} we have
$$ |\bigcup_{a \in A''} \rho(a)(E \cap (e_j + H'' + P_l))| \ll |E|.$$
From Corollary \ref{itg} and Proposition \ref{itg2}, we have 
$$\rho(A'')(H''+P_l) \subset H'' + C P_l$$
for some $C=O(1)$.  We conclude that each set $\rho(a)(E \cap (e_j + H'' + P_l))$ has cardinality $\sim |E|$ and is contained in $\rho(a)(e_j) + H'' + C P_l$.  By a greedy algorithm, we thus see that set $\{ \rho(a)(e_j): a \in A'' \}$ can be covered by $O(1)$ translates of $H'' + 2CP_l$, and hence by $O(1)$ translates of $H'' + P_l$.

Similarly, from \eqref{ess} we have
$$ |\bigcup_{i=1}^J \bigcup_{a \in A''} \rho(a) e_i + \rho(a)( E \cap (e_j + H'' + P_l) ) | \ll |E|$$
and so by arguing as before we see that $\{ \rho(a)(e_i) + \rho(a)(e_j): 1 \leq i \leq J; a \in A'' \}$ can also be covered by $O(1)$ translates of $H''+P_l$.  Subtracting, we conclude that 
$$\{ \rho(a)(e_i) : 1 \leq i \leq J; a \in A'' \} \subset \{ f_1,\ldots,f_M \} + H'' + P_l$$
for some $f_1,\ldots,f_M \in V$ with $M = O(1)$. In other words, we have
$$ \rho(a)(e_i) = f_{m_{a,i}} + x_{a,i}$$
for all $1 \leq i \leq J$ and $a \in A''$, where $1 \leq m_{a,i} \leq M$ and $x_{a,i} \in H''+P_l$.

Since $M, J = O(1)$, we may apply the pigeonhole principle and find integers $1 \leq m_1,\ldots,m_J \leq M$ and a subset $A'''$ of $A''$ with size $|A'''| \sim |A|$ such that
$$ \rho(a)(e_i) = f_{m_i} + x_{a,i}$$
for all $1 \leq i \leq J$ and $a \in A'''$.  In particular, for any $a,a' \in A'''$ and $1 \leq i \leq J$, we have
$$ \rho(a)(e_i) - \rho(a')(e_i) \in H'' + 2P_l.$$
Applying Corollary \ref{itg} and Proposition \ref{itg2}, we thus have
$$ \rho(a)(e_i) - \rho(a')(e_i) = \rho(a')(y_{a,a',i})$$
for some $y_{a,a',i} \in H'' + C P_l$ and some $C=O(1)$.  We rearrange this as
$$ \rho((a')^{-1} a )(e_i) = e_i + y_{a,a',i}.$$
Thus, if we set $A' := (A''')^{-1} \cdot A'''$, we have
$$ \rho(a)(e_i) = h_{a,i} + n_{a,i,1} v_1 + \ldots + n_{a,i,r} v_r + e_i$$
for all $a \in A'$ and $1 \leq i \leq J$, with $h_{a,i} \in H''$ and each $n_{a,i,j}$ being an integer of size $O(N_j)$.

Proposition \ref{key-prop} now follows by taking $H$ to be $H''$, and $P$ to be the progression 
$$ P_l + \{ s_1 e_1 + \ldots + s_J e_J: s_j \in \{-1,0,1\} \hbox{ for } 1 \leq j \leq J \}$$
(thus tacking $J$ dimensions of $1$ at the end of the existing dimensions $\{ N_j/l^2: N_j \geq l^2 \}$ of $P_l$; the various properties claimed in that proposition can be easily verified.  The proof of Proposition \ref{key-prop} (and thus Theorem \ref{frei-solv}) is complete.

\section{A Milnor-Wolf type theorem}\label{applied}

In this section we prove Theorem \ref{mwt}.   Let the notation and assumptions be as in that theorem.  We allow all implied constants to depend on $l,d$, and we assume that $R$ is sufficiently large depending on these parameters.  We may as well assume that $S$ is symmetric (since otherwise we just replace $S$ with $S \cup S^{-1}$).

By hypothesis, $|B_S(R)| \leq R^{O(1)}$.  By the pigeonhole principle, there thus exists a radius $R^{0.8} \leq r_0 \leq R^{0.9}$ (say) such that $B_S(r_0)$ has doubling constant $O(1)$.  Applying Theorem \ref{frei-solv}, we can find a coset nilprogression $A$ of cardinality and volume $\sim |B_S(r_0)|$, derived length $l$, and ranks $O(1)$ contained in $B_S(O(r_0))$ which $O(1)$-controls $B_S(r_0)$.  In particular, there exists a set $X \subset B_S(O(r_0))$ of cardinality $|X| = O(1)$ such that
$$ B_S(r_0) \subset X \cdot A.$$
Suppose that there exist two distinct elements $x, x'$ of $X$ such that $x \cdot A^{\pm 10}$ and $x' \cdot A^{\pm 10}$ overlap.  Then both $x \cdot A$ and $x' \cdot A$ are contained in $A^{\pm 21}$, and we have
$$ B_S(r_0) \subset (X \backslash \{x'\}) \cdot A^{\pm 21}.$$
Repeating this procedure $O(1)$ times, we eventually obtain a covering of the form
$$ B_S(r_0) \subset X' \cdot A^{\pm n}$$
for some $X' \subset X$ and some $n=O(1)$, such that the sets $x \cdot A^{\pm 10n}$ for $x \in X'$ are disjoint.  

Fix $X'$ and $n$.  For each $r$, let $X'_r$ be the set of all $x \in X'$ such that $x \cdot A^{\pm n}$ intersects $B_S(r)$.  These are subsets of $X'$ which increase in $r$, thus by the pigeonhole principle there exists $r_1 = O(1)$ such that $X'_{r_1} = X'_{r_1+1}$.

If $x \in X'_{r_1}$ and $s \in S$, then $x \cdot A^{\pm n}$ intersects $B_S(r)$, thus $sx \cdot A^{\pm n}$ intersects $B_S(r_1+1)$, thus $sx \cdot A^{\pm n}$ intersects $x' \cdot A^{\pm n}$ for some $x' \in X'_{r_1+1} \to X'_{r_1}$.  As the $x' \cdot A^{\pm 10n}$ are disjoint, we see that $x'$ is uniquely determined by $s$ and $x$; thus $s$ determines a map $\rho(s): X'_{r_1} \to X'_{r_1}$ from $X'_{r_1}$ to itself defined by $\rho(s)(x) := x'$.  In particular, $sx \cdot A^{\pm n}$ intersects $\rho(s)(x) \cdot A^{\pm n}$, and so
$$ sx \in \rho(s)(x) \cdot A^{\pm O(1)},$$
which implies that
\begin{equation}\label{xr1}
 S \cdot X'_{r_1} \subset X'_{r_1} \cdot A^{\pm O(1)}.
\end{equation}
We iterate this to obtain
$$ \langle S\rangle \cdot X'_{r_1} \subset X'_{r_1} \cdot \langle A \rangle.$$
Since $S$ generates $G$, we conclude that
$$ G = X'_{r_1} \cdot \langle A \rangle,$$
thus $\langle A \rangle$ has index $O(1)$ in $G$.  By Lemma \ref{virtnil}, we conclude that $G$ contains a nilpotent subgroup of $O(1)$ generators, step $O(1)$, and index $O( |A|^{O(1)} ) = O( R^{O(1)} )$.

Next, we return to \eqref{xr1} and iterate it again to obtain
$$ B_S(r) \cdot X'_{r_1} \subset X'_{r_1} \cdot A^{\pm O(r)}$$
for any $r \geq R$.  Since $X'_{r_1} \subset X \subset B_S(R)$, we conclude that
$$ B_S(r) \subset B_S(R) \cdot A^{\pm O(r)} \cdot B_S(R).$$
Applying Lemma \ref{nilpoly}, we have $|A^{\pm O(r)}| \ll r^{O(1)} |A| \ll r^{O(1)}$.  Since by hypothesis $|B_S(R)| \leq R^{O(1)} \ll r^{O(1)}$, we obtain $|B_S(r)| \ll r^{O(1)}$, and Theorem \ref{mwt} follows.

\begin{remark} It seems plausible that one could improve the polynomial growth bound $|B_S(r)| \ll r^{O(1)}$ for $r \geq R$ further, to establish a doubling bound $|B_S(2r)| \ll |B_S(r)|$ for $r \geq R$.  In order to do this, one presumably first needs to establish a doubling version of Lemma \ref{nilpoly}, but we will not pursue this matter.  (Because of virtual nilpotency of $G$, it is not difficult to see that in the asymptotic limit $r \to \infty$, we have $|B_S(r)| = (C + o(1)) r^D$ for some constants $C > 0$ and some integer $D = O(1)$, but this only settles the question in the case of extremely large $r$.)
\end{remark}

\appendix

\section{Product set estimates}\label{pse}

It is convenient to replace the notion of a set of small doubling by that of a $K$-approximate group (cf. \cite[Chapter 2]{tao-vu}, \cite{tao-noncomm})

\begin{definition}[Centred set] A \emph{centred set} in $G$ is a multiplicative set $(A,G)$ that contains the group identity $1$ (thus in particular $A^k \subset A^{k'}$ for $k \leq k'$) and is also symmetric (thus $A^{-1} := \{a^{-1}: a \in A \}$ is equal to $A$).
\end{definition}

\begin{definition}[$K$-approximate group]\label{approx}  Let $K \geq 1$.  A \emph{$K$-approximate group} is a centred set $(A,G)$ such that there exists a set $X \subset G$ with $|X| \leq K$ such that $A^2 \subset X \cdot A \subset A \cdot X \cdot X$ and $A^2 \subset A \cdot X \subset X \cdot X \cdot A$.  Note that this in particular implies that
$$ |A^k| \leq K^{k-1} |A|$$
for all $k \geq 1$.
\end{definition}

\begin{examples}  Finite subgroups of $G$ are $1$-approximate groups.  Geometric progressions are $2$-approximate groups.  Balls in nilpotent groups of bounded rank (with respect to the word metric on a bounded set of generators) are $O(1)$-approximate groups.  The homomorphic image of a $K$-approximate group is another $K$-approximate group.  The Cartesian product of two $K$-approximate groups is a $KK'$-approximate group.
If $\pi: H \to G$ is a finite extension of $G$, and $A$ is a $K$-approximate group in $G$, then $\pi^{-1}(A)$ is a $K$-approximate group in $H$.  If $A$ is a $K$-approximate group, then $A^k$ is a $K^k$-approximate group for any $k \geq 1$, and $A^k$ and $A$ $K^{k-1}$-control each other.
\end{examples}

\begin{lemma}[Small doubling controlled by $K$-approximate groups]\label{small}  Let $K \geq 1$.
\begin{itemize}
\item[(i)] If $(A,G)$ is a multiplicative set of doubling constant at most $K$, then there exists a $O_K(1)$-approximate group $B \subset A^{\pm 3}$ that $O_K(1)$-controls $A$.
\item[(ii)] If $(A,G)$ is a multiplicative set of tripling constant at most $K$, then $A^{\pm 3}$ is a $O_K(1)$-approximate group that $O_K(1)$-controls $A$.
\item[(iii)] If $(A,G)$ is a $K$-approximate group, and $A' \subset A$ is such that $|A'| \geq |A|/K$, then $(A')^{\pm 3}$ is a $O_K(1)$-approximate group which $O_K(1)$-controls $A$.
\end{itemize}
\end{lemma}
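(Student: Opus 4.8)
The plan is to prove part (ii) first and then deduce parts (iii) and (i) from it, since (ii) is the purely combinatorial core and the genuinely non-commutative difficulty lives in (i).

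For (ii), suppose $|A^3| \leq K|A|$. Picking any $a \in A$ and using $aA^2 \subseteq A^3$ gives $|A^2| \leq K|A|$, and more generally the non-commutative Pl\"unnecke--Ruzsa and Ruzsa-triangle estimates (see \cite[Chapter 2]{tao-vu}, \cite{tao-noncomm}) propagate this to $|A^{\pm k}| \ll_{K,k} |A|$ for every $k \geq 1$; the point is that it is \emph{tripling}, not doubling, that is stable under taking further products in the non-abelian setting. Thus $B := A^{\pm 3}$ is symmetric, contains $1$, satisfies $|B| \geq |A|$, and has $|B^k| \ll_{K,k} |B|$ for all $k$, and a standard covering argument then shows such a set is automatically a $O_K(1)$-approximate group. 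Finally $B$ trivially $O_K(1)$-controls $A$, since $A \subseteq A^{\pm 3} = B$ and $|B| \ll_K |A|$, so $X = \{1\}$ works.

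For (iii), if $A$ is a $K$-approximate group and $A' \subseteq A$ with $|A'| \geq |A|/K$, then $|(A')^3| \leq |A^3| \leq K^2|A| \leq K^3|A'|$, so $A'$ has tripling constant $O_K(1)$; applying (ii) to $A'$ shows $(A')^{\pm 3}$ is a $O_K(1)$-approximate group. It $O_K(1)$-controls $A$ because $|(A')^{\pm 3}| \ll_K |A'| \leq |A|$, while the Ruzsa covering lemma applied to the pair $(A,A')$ — using $|A \cdot A'| \leq |A^2| \leq K|A| \leq K^2|A'|$ together with the symmetric estimate $|A' \cdot A| \leq K^2|A'|$ — produces $X$ with $|X| = O_K(1)$ and $A \subseteq X \cdot (A')^{\pm 3}$ and $A \subseteq (A')^{\pm 3} \cdot X$, after noting $A'(A')^{-1}, (A')^{-1}A' \subseteq (A')^{\pm 3}$.

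Part (i) is the hard case, because $|A^2| \leq K|A|$ does \emph{not} imply small tripling: for $A = H \cup \{g\}$ with $H$ a large subgroup and $gHg^{-1} \cap H$ trivial, $A$ has doubling $\approx 3$ while $|A^3| \sim |A|^2$. The remedy is to first extract, via the non-commutative Balog--Szemer\'edi--Gowers theorem of \cite{tao-noncomm} (small doubling forces many coincidences $a_1 b_1^{-1} = a_2 b_2^{-1}$ with $a_i,b_i \in A$), a subset $A' \subseteq A$ with $|A'| \gg_K |A|$ whose tripling constant is $O_K(1)$. Then (ii) applies to $A'$: the set $B := (A')^{\pm 3}$ is an $O_K(1)$-approximate group, and $B \subseteq A^{\pm 3}$ because $A' \subseteq A$. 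That $B$ controls $A$ is now exactly the argument of (iii): $|B| \ll_K |A'| \leq |A|$, and Ruzsa covering applied to $(A,A')$ (valid since $|A \cdot A'| \leq |A^2| \leq K|A| \ll_K |A'|$) covers $A$ by $O_K(1)$ left- and right-translates of $A'(A')^{-1} \subseteq B$. The main obstacle, and the only place serious non-abelian input enters, is the extraction of the small-tripling subset in (i); everything else reduces by soft covering arguments to the abelian Ruzsa calculus.
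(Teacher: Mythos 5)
The paper does not give an argument for this lemma; it simply cites \cite[Theorem~4.6, Corollary~3.10, Lemma~3.6]{tao-noncomm}. Your blind reconstruction is correct and in fact recovers what those cited results do. The key points you identify are exactly the right ones: (a) in the non-abelian setting it is \emph{tripling}, not doubling, that is stable under taking further product-quotient sets, so $|A^3|\leq K|A|$ propagates via the non-commutative Ruzsa triangle inequality to $|A^{\pm k}|\ll_{K,k}|A|$, from which Ruzsa covering yields that $A^{\pm 3}$ is an $O_K(1)$-approximate group; (b) (iii) reduces to (ii) by noting $|(A')^3|\leq |A^3|\leq K^2|A|\leq K^3|A'|$ and then covering $A$ by translates of $A'(A')^{-1}\subset(A')^{\pm3}$ via the Ruzsa covering lemma applied to $|AA'|\leq|A^2|\leq K|A|\ll_K|A'|$; and (c) small doubling does \emph{not} give small tripling (your $A = H\cup\{g\}$ example with $gHg^{-1}\cap H$ trivial is the standard one), so (i) genuinely requires the non-commutative Balog--Szemer\'edi--Gowers step of \cite{tao-noncomm} to extract a large $A'\subset A$ of small tripling before invoking (ii) and the covering argument of (iii). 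This is precisely the route taken in the cited reference, so your proof and the paper's intended proof coincide; the only gloss is that in (ii) the verification that $A^{\pm3}$ satisfies the two-sided chain $B^2\subset XB\subset BXX$ (and symmetrically) in the paper's Definition~\ref{approx} requires a little more than a single application of Ruzsa covering, but it is routine from $|A^{\pm k}|\ll_{K,k}|A|$ and is exactly what \cite[Corollary~3.10]{tao-noncomm} records.
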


\begin{proof}  For (i), see \cite[Theorem 4.6]{tao-noncomm}.  For (ii), see \cite[Corollary 3.10]{tao-noncomm}.  For (iii), see \cite[Corollary 3.10]{tao-noncomm} and \cite[Lemma 3.6]{tao-noncomm}.  Indeed, the bounds here are polynomial in $K$, although we will not exploit this.  
\end{proof}

Now, we investigate how approximate groups behave in group extensions.

\begin{lemma}[Projection lemma]\label{proj}  Let $\tilde G$ be an extension of a group $G$ with projection map $\pi: \tilde G \to G$, and let $(\tilde A, \tilde G)$ be a $K$-approximate group for some $K \geq 1$.
\begin{itemize}
\item[(i)] $\pi(\tilde A)$ is a $K$-approximate group.
\item[(ii)] For any $h \in \pi(\tilde A)$ and $k \geq 3$, we have $|\pi^{-1}(\{h\}) \cap (\tilde A)^k| \sim_{K,k} |\tilde A|/|\pi(\tilde A)|$.  
\item[(iii)] For any $k \geq 3$, $(\ker(\pi) \cap (\tilde A)^k)^3$ is a $O_{K,k}(1)$-approximate group of cardinality $\sim_{K,k} |\tilde A|/|\pi(\tilde A)|$.
\end{itemize}
\end{lemma}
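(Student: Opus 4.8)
The plan is to establish the three parts in sequence; part (ii) carries essentially all the content, and parts (i), (iii) follow by routine manipulation.

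For \textbf{(i)}, one simply pushes forward a control set. If $X \subset \tilde G$ with $|X| \leq K$ witnesses that $\tilde A$ is a $K$-approximate group, then applying $\pi$ to the inclusions in Definition \ref{approx} shows that $\pi(X)$ (which has size at most $K$) witnesses $\pi(\tilde A)$; note $\pi(\tilde A)$ is automatically symmetric and contains $1$. (This is already recorded among the examples following Definition \ref{approx}.)

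For \textbf{(ii)}, write $N := |\tilde A|/|\pi(\tilde A)|$, the average fibre size of $\pi|_{\tilde A}$; the goal is $N \ll_{K,k} |\pi^{-1}(\{h\}) \cap \tilde A^k| \ll_{K,k} N$ for all $h \in \pi(\tilde A)$ and all $k \geq 3$. For the lower bound I first handle $h = 1$, $k = 2$: the map $(\tilde a, \tilde a') \mapsto \tilde a^{-1}\tilde a'$ carries the set of pairs $(\tilde a, \tilde a') \in \tilde A \times \tilde A$ with $\pi(\tilde a) = \pi(\tilde a')$ into $\ker(\pi) \cap \tilde A^2$ (using that $\tilde A$ is symmetric), and each of its fibres has size at most $|\tilde A|$; since Cauchy--Schwarz bounds the number of such pairs below by $|\tilde A|^2/|\pi(\tilde A)|$, we get $|\ker(\pi) \cap \tilde A^2| \geq N$. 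For general $h$ and $k \geq 3$, choosing $\tilde a \in \tilde A$ with $\pi(\tilde a) = h$ and left-translating gives $\tilde a \cdot (\ker(\pi) \cap \tilde A^2) \subset \pi^{-1}(\{h\}) \cap \tilde A^3 \subset \pi^{-1}(\{h\}) \cap \tilde A^k$, so the fibre has size at least $N$. For the upper bound, again fix $\tilde a$ with $\pi(\tilde a) = h$; left multiplication by $\tilde a^{-1}$ injects $\pi^{-1}(\{h\}) \cap \tilde A^k$ into $F := \ker(\pi) \cap \tilde A^{k+1}$, so it suffices to bound $|F|$. Translating $F$ by a preimage of each $h' \in \pi(\tilde A)$ exhibits $|\pi(\tilde A)|$ pairwise disjoint subsets of $F \cdot \tilde A \subset \tilde A^{k+2}$, each of size $|F|$, whence $|\pi(\tilde A)| \, |F| \leq |\tilde A^{k+2}| \leq K^{k+1} |\tilde A|$, i.e. $|F| \leq K^{k+1} N$.

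For \textbf{(iii)}, set $E := \ker(\pi) \cap \tilde A^k$, which is symmetric and contains $1$, hence centred. Part (ii) (with $h = 1$) gives $|E| \geq N$, and $|E^3| \leq |\ker(\pi) \cap \tilde A^{3k}| \leq K^{3k+1} N \leq K^{3k+1}|E|$, so $E$ has tripling constant $O_{K,k}(1)$; since $E$ is symmetric and contains $1$ we have $E^{\pm 3} = E^3$, so Lemma \ref{small}(ii) makes $E^3$ an $O_{K,k}(1)$-approximate group, and its cardinality, caught between $N$ and $K^{3k+1}N$, is $\sim_{K,k} |\tilde A|/|\pi(\tilde A)|$.

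The crux is part (ii): the two-sided control on fibre sizes over \emph{every} element of $\pi(\tilde A)$. The lower bound needs the Cauchy--Schwarz (additive-energy) trick to manufacture a large kernel intersection, and the upper bound needs the covering inequality $|F \cdot \tilde A| \geq |\pi(\tilde A)| \, |F|$ to prevent any single fibre from being too large; everything else is bookkeeping. A minor point to be careful with is keeping the various exponents $k$, $k+1$, $k+2$, $3k$ straight and checking that each invocation of part (ii) is at an index $\geq 3$.
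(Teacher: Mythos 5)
Your proof is correct and follows essentially the same strategy as the paper: the content is in part (ii), where a disjoint-translate covering argument (translates of the fibre are pairwise disjoint inside a bounded power of $\tilde A$) gives the upper bound, and a large subset of $\ker(\pi)\cap \tilde A^2$ is manufactured and then translated into the desired fibre for the lower bound; parts (i) and (iii) are routine consequences as you say. The only deviations from the paper are cosmetic: for the lower bound the paper applies the pigeonhole principle to find a single heavy fibre of $\pi|_{\tilde A}$ (rather than the Cauchy--Schwarz second-moment count you use), and for the upper bound the paper runs the disjoint-translate argument directly on $E = \pi^{-1}(\{h\})\cap\tilde A^k$ inside $\tilde A\cdot E \subset \tilde A^{k+1}$, whereas you first shift to $F = \ker(\pi)\cap\tilde A^{k+1}$ and work inside $\tilde A^{k+2}$ --- both are sound and differ only in bookkeeping.
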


\begin{proof}  Part (i) follows from the more general observation that the homomorphic image of a $K$-approximate group is another $K$-approximate group.  To prove (ii), if we set $E := \pi^{-1}(\{h\}) \cap (\tilde A)^k$, then we see that 
$$|\tilde A \cdot E| \leq |(\tilde A)^{k+1}| \ll_{K,k} |\tilde A|.$$
Splitting $\tilde A$ into the fibres $\pi^{-1}(\{a\})$ of $\pi$, we conclude that
$$ |\pi(\tilde A)| |E| \ll_{K,k} |\tilde A|$$
thus yielding the upper bound for (ii).  To obtain the lower bound, observe from the pigeonhole principle that there must exist $h_0 \in \pi(\tilde A)$ such that
$$ |\pi^{-1}(\{h_0\}) \cap \tilde A| \geq |\tilde A| / |\pi(\tilde A)|.$$
Taking quotient sets, we conclude that
$$ |\ker(\pi) \cap \tilde A^2| \geq |\tilde A| / |\pi(\tilde A)|$$
and then multiplying by an arbitrary element of $\pi^{-1}(\{h\}) \cap \tilde A$ we obtain the claim.

Finally, from (ii) we see that $\ker(\pi) \cap (\tilde A)^k$ is a centred multiplicative set of tripling constant $O_{K,k}(1)$ and cardinality $\sim_{K,k} |\tilde A|/|\pi(\tilde A)|$, and (iii) then follows from Lemma \ref{small}(ii).
\end{proof}

\begin{remark} For a more refined statement about the structure of $\tilde A$ in terms of $\pi(\tilde A)$ and $\ker(\pi) \cap (\tilde A)^k$, see \cite[Lemma 7.7]{tao-noncomm}.
\end{remark}

The following result follows immediately from the main result in \cite{gr-4}; the case $G=\Z$ is of course Freiman's original theorem\cite{frei}; the case when $G$ has bounded torsion is in \cite{ruzsa-group}.

\begin{definition}[Generalised arithmetic progression]  A \emph{(symmetric) generalised arithmetic progression} (or \emph{progression} for short) in an abelian group $G = (G,+)$ is any set $P$ of the form
$$ P = \{ n_1 v_1 + \ldots + n_r v_r: n_i \in \Z, |n_i| \leq N_i \hbox{ for all } 1 \leq i \leq r \}$$
for some $v_1,\ldots,v_r \in \Z$ and $N_1,\ldots,N_r \geq 1$.  We refer to $r$ as the \emph{rank} of the progression.  If $t \geq 1$, we say that the progression $P$ is \emph{$t$-proper} if the sums
$$ n_1 v_1 + \ldots + n_r v_r$$
for $n_i \in \Z$, $|n_i| \leq tN_i$ are all distinct.  We say that a progression is \emph{proper} if it is $1$-proper.
\end{definition}

\begin{remark} Technically, the progression should refer to the data $(r, v_1,\ldots,v_r, N_1,\ldots,N_r)$ rather than the set $P$ (since there are multiple ways to represent a single set as a progression, but we shall abuse notation and use the set $P$ to denote the progression instead.  Similarly for the concept of a coset progression below.
\end{remark}

\begin{definition}[Coset progression]  A \emph{(symmetric) coset progression} in an abelian group $G = (G,+)$ is any set of the form $H+P$, where $H$ is a finite subgroup $H$ of $G$, and $P$ is a generalised arithmetic progression
$$ P = \{ n_1 v_1 + \ldots + n_r v_r: n_i \in \Z, |n_i| \leq N_i \hbox{ for all } 1 \leq i \leq r \}.$$
We refer to $r$ as the \emph{rank} of the coset progression.  If $t \geq 1$, we say that the coset progression $H+P$ is \emph{$t$-proper} if the sums
$$ h + n_1 v_1 + \ldots + n_r v_r$$
for $h \in H$, $n_i \in \Z$, $|n_i| \leq tN_i$ are all distinct.  We say that a coset progression is \emph{proper} if it is $1$-proper.
\end{definition}

Coset progressions have already appeared in Theorems \ref{gr}, \ref{gr2} in the introduction.  We now give a variant of Theorem \ref{gr} that ensures some properness to the coset progressions.

\begin{theorem}[Proper Green-Ruzsa's Freiman theorem]\label{gr-proper}\cite{gr-4}  Let $(A,G)$ be an additive set with doubling constant at most $K$ for some $K \geq 1$. For any function $F: \R^+ \to \R^+$, we can find $1 \leq M \ll_{K,F} 1$ and a $F(M)$-proper coset progression $H+P$ of rank $O_K(1)$ and cardinality $M|A|$ that contains $A$.  Furthermore we have $H+P \subset C(A \cup \{0\} \cup -A)$ for some $C = O_{K,F}(1)$.
\end{theorem}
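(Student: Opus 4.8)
The plan is to bootstrap from Theorem \ref{gr}, which already produces a coset progression containing $A$ of rank $O_K(1)$ and size $O_K(|A|)$ lying in $\pm O_K(1) A$, and then to \emph{repair} its properness by a dimension-reduction iteration. The point is that failure of $t$-properness of a coset progression exhibits a short linear relation among its generators, and such a relation lets one eliminate a generator at the cost of a controlled size increase and a bounded dilation; since this strictly lowers the rank, only $O_K(1)$ repairs are ever needed, which is what keeps every constant of the form $\ll_{K,F}1$.

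The main ingredient is the following Bilu-type lemma, which is essentially contained in \cite{gr-4} and is the coset-group analogue of the dimension-reduction lemma in \cite{tao-vu}: if $Q = H+P$ is a coset progression of rank $d$ in an abelian group that is \emph{not} $t$-proper for some $t \geq 1$, then $Q$ is contained in a coset progression $Q'$ of rank at most $d-1$ with $|Q'| \leq (Ct)^{Cd}|Q|$ for an absolute constant $C$, and with $Q' \subset O_{d,t}(1) \cdot Q$. Indeed, failure of $t$-properness yields integers $m_1,\dots,m_d$, not all zero, with $|m_i| \leq 2tN_i$ and $m_1 v_1 + \dots + m_d v_d \in H$; feeding this short relation into the usual geometry-of-numbers argument — choosing a reduced basis for the lattice of all relations and folding one of the generators into an enlarged but still finite torsion subgroup — produces $Q'$ with the stated bounds. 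I expect the verification of this lemma, in particular the bookkeeping needed to absorb the torsion of the individual generators and of $H$ into the coset part, to be the only genuinely technical point; the rest is assembly.

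Granting the lemma, proceed as follows. By Theorem \ref{gr} there is a coset progression $Q_0 \supseteq A$ of rank $d_0 = O_K(1)$ with $|Q_0| \leq C_0|A|$ and $Q_0 \subset C_0(A \cup \{0\} \cup -A)$ for some $C_0 = O_K(1)$. Set $M_0 := |Q_0|/|A| = O_K(1)$. Inductively, given a coset progression $Q_j \supseteq A$ of rank $d_j$ with $Q_j \subset O_{K,F}(1)(A \cup \{0\} \cup -A)$, write $M_j := |Q_j|/|A|$. If $Q_j$ is $F(M_j)$-proper we stop, outputting $H+P := Q_j$ and $M := M_j$; otherwise we apply the lemma with $t := F(M_j)$ to obtain $Q_{j+1} \supseteq Q_j \supseteq A$ of rank $d_{j+1} \leq d_j - 1$ with $|Q_{j+1}| \leq (C F(M_j))^{C d_j} |Q_j|$ and $Q_{j+1} \subset O_{d_j,F(M_j)}(1) \cdot Q_j \subset O_{K,F}(1)(A \cup \{0\} \cup -A)$. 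Since the ranks strictly decrease from $d_0 = O_K(1)$, the process terminates after at most $d_0$ steps — a coset progression of rank $0$ is a finite group and hence $t$-proper for every $t$ — and the output has rank at most $d_0 = O_K(1)$.

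Finally, the quantitative claims have the asserted shape. The final $M$ is obtained from $M_0 = O_K(1)$ by at most $d_0 = O_K(1)$ applications of maps of the form $m \mapsto (C F(m))^{C d_j}m$; it is therefore a fixed quantity built from $K$-dependent constants and an $O_K(1)$-fold iterated evaluation of $F$, so $M \ll_{K,F}1$, and likewise the properness threshold $F(M)$ and the size ratio $M$ are $\ll_{K,F}1$. Composing the containments $Q_{j+1}\subset O_{d_j,F(M_j)}(1)\cdot Q_j$ over the (at most $d_0$) steps, using $Q_0 \subset C_0(A\cup\{0\}\cup -A)$ and the symmetry of $A \cup \{0\} \cup -A$, yields $H+P \subset C(A\cup\{0\}\cup -A)$ with $C = O_{K,F}(1)$, completing the argument.
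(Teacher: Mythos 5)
Your proposal is correct and takes essentially the same approach as the paper: start from Theorem~\ref{gr}, then run a rank-reduction iteration (the paper's ``Step 0 through Step 3'' algorithm), invoking a dimension-reduction lemma for non-proper coset progressions (the paper cites \cite[Lemma 5.1]{john}, which is the Bilu-type lemma you sketch) and terminating after $O_K(1)$ steps because the rank strictly decreases. The only difference is that the paper black-boxes the dimension-reduction lemma as a citation rather than sketching its geometry-of-numbers proof as you do.
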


\begin{proof}   This will follow from Theorem \ref{gr} by a standard ``rank reduction argument'' which we now give.  We run the following algorithm to find $M$ and $H+P$.

\begin{itemize}
\item Step 0.  By Theorem \ref{gr}, we can find a coset progresion $H+P$ of rank $O_K(1)$ and cardinality $O_K(A)$ that contains $A$, with $H+P \subset C(A \cup \{0\} \cup -A)$ for some $C = O_K(1)$.
\item Step 1.  Set $M := |H+P|/|A|$, and set $r$ to be the rank of $H+P$.  If $H+P$ is already $F(M)$-proper, then {\tt STOP}.  Otherwise, move on to Step 2.
\item Step 2.  Since $H+P$ is not $F(M)$-proper, we may invoke \cite[Lemma 5.1]{john}, and contain $H+P$ in a coset progression $H'+P'$ of rank at most $r-1$ and cardinality $O_{F,M,r}(|H+P|)$.  Furthermore we have $H'+P' \subset C(H+P)$ for some $C = O_{r,F(M)}(1)$.
\item Step 3.  Replace $H+P$ by $H'+P'$ and return to Step 1.
\end{itemize}

Observe that the rank of $H+P$ decreases by at least $1$ each time one passes from Step 3 to Step 1, and so the algorithm terminates in $O_K(1)$ steps.  The quantity $M$ increases to $O_{F,M,r}(1)$ in each iteration, so is ultimately bounded by $O_{F,K}(1)$, and the claim follows.
\end{proof}

\section{S\'ark\"ozy type theorems}

A classical result of S\'ark\"ozy\cite{sar} asserts that if $A \subset \{1,\ldots,N\}$ and $k|A| \geq CN$ for some sufficiently large absolute constant $C$, then $kA$ contains an arithmetic progression of length comparable to $N$; see \cite{lev} for a more precise result, and \cite{szvu} for
various generalisations and extensions.  In this appendix we give several results of this type, in which $A$ now lives in a finite group, generalised arithmetic progression, or coset progression.

\begin{lemma}[S\'ark\"ozy-type theorem in finite abelian groups]\label{fsar}  Let $G = (G,+)$ be a finite abelian group, and let $A \subset G$ be such that $|A| \geq \delta |G|$ for some $0 < \delta < 1$.  Then there exists a positive integer $1 \leq m \ll_\delta 1$ and a subgroup $H$ of $G$ of index $|G/H| \ll_\delta 1$ such that $mA-mA$ contains $H$.  In fact, we have the stronger statement that each element $h$ of $H$ has $\gg_\delta |G|^{2m}$ representations of the form $h = a_1 + \ldots + a_m - b_1 - \ldots - b_m$ with $a_1,\ldots,a_m,b_1,\ldots,b_m \in A$.
\end{lemma}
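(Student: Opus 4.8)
The plan is to run a Fourier-analytic (Bogolyubov-type) argument on $G$; the only twist beyond the textbook case is that one must extract a genuine subgroup of bounded index rather than merely a Bohr set. Write $r_m(h)$ for the number of representations in the statement and observe that $r_m = g^{*m}$, where $g := 1_A * 1_{-A}$ and $*$ is convolution on $G$. Using the Fourier transform $\widehat{f}(\xi) := \sum_{x\in G} f(x)\overline{\chi_\xi(x)}$ on the dual group $\widehat{G}$, one has $\widehat{g}(\xi) = |\widehat{1_A}(\xi)|^2$ and hence, by Fourier inversion,
\[
 r_m(h) \;=\; \frac{1}{|G|}\sum_{\xi\in\widehat{G}} |\widehat{1_A}(\xi)|^{2m}\,\chi_\xi(h),
\]
in which the trivial character contributes the main term $|A|^{2m}/|G| \geq \delta|A|^{2m-1}$ (using $\widehat{1_A}(0)=|A|$ and $|A|\geq\delta|G|$), while Parseval's identity $\sum_{\xi}|\widehat{1_A}(\xi)|^2 = |G|\,|A|$ controls the remaining terms.

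The key step I would isolate is a claim about the \emph{strong spectrum} $\Gamma := \{\xi\in\widehat{G} : |\widehat{1_A}(\xi)|\geq (1-\kappa)|A|\}$, for a small $\kappa=\kappa(\delta)>0$ to be chosen at the end: first, by Parseval, $|\Gamma| \leq |G|/((1-\kappa)^2|A|) \leq 2/\delta = O_\delta(1)$; and second, \emph{every $\xi\in\Gamma$ has order $O_\delta(1)$ in $\widehat{G}$}. For the order bound, if $\xi$ has order $t$ then $\chi_\xi$ maps $G$ onto the $t$th roots of unity and partitions $A$ into ``slabs'' $A_k=A\cap\chi_\xi^{-1}(\omega^k)$, each of size $|A_k|\leq|G|/t$; the inequality $|\sum_k|A_k|\omega^{-k}|\geq(1-\kappa)|A|$, together with the elementary bound $1-\cos\theta\geq\tfrac{2}{\pi^2}\theta^2$ on $[-\pi,\pi]$, forces all but an $O(\kappa^{1/2})$-fraction of $A$ into an arc of at most $\tfrac{1}{\pi}\kappa^{1/4}t+1$ consecutive slabs, so that $(1-O(\kappa^{1/2}))|A| \leq \tfrac{1}{\pi}\kappa^{1/4}|G| + |G|/t$. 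Since $|A|\geq\delta|G|$, choosing $\kappa$ small enough in terms of $\delta$ forces $t\leq T$ for some $T=T(\delta)$.

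Granting the claim, I would take $H := \Gamma^{\perp} = \bigcap_{\xi\in\Gamma}\ker\chi_\xi$. By Pontryagin duality $|G/H| = |\langle\Gamma\rangle| \leq \prod_{\xi\in\Gamma}\operatorname{ord}(\xi) \leq T^{|\Gamma|} = O_\delta(1)$, which is the required index bound. Now fix a sufficiently large constant $m=m(\delta)$ with $(1-\kappa)^{2m-2}\leq\delta/2$. For $h\in H$ we have $\chi_\xi(h)=1$ for every $\xi\in\langle\Gamma\rangle$, so the $\langle\Gamma\rangle$-part of the displayed sum equals $\sum_{\xi\in\langle\Gamma\rangle}|\widehat{1_A}(\xi)|^{2m}\geq |A|^{2m}$, while every $\xi\notin\langle\Gamma\rangle\supseteq\Gamma$ satisfies $|\widehat{1_A}(\xi)|<(1-\kappa)|A|$, so by Parseval the complementary part has absolute value at most $((1-\kappa)|A|)^{2m-2}\sum_\xi|\widehat{1_A}(\xi)|^2 = (1-\kappa)^{2m-2}|A|^{2m-1}|G|$. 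Combining, $r_m(h) \geq \tfrac{|A|^{2m}}{|G|} - (1-\kappa)^{2m-2}|A|^{2m-1} \geq \tfrac{\delta}{2}|A|^{2m-1} \geq \tfrac{1}{2}\delta^{2m}|G|^{2m-1} \gg_\delta |G|^{2m-1}$ for all $h\in H$; in particular $r_m(h)>0$, so $mA-mA\supseteq H$, and the stated lower bound on the number of representations follows (read with the natural exponent $|G|^{2m-1}\sim|A|^{2m}/|G|$, which is the most a single element can have).

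I expect the order bound for strong-spectrum characters to be the only genuinely substantive point: it is exactly what prevents $\Gamma^{\perp}$ from collapsing to a tiny, Bohr-set-like subgroup, and everything else — the convolution identity, the Parseval estimates, and the selection of the constants $\kappa(\delta)$, $T(\delta)$, $m(\delta)$ — is routine bookkeeping. (The routine case $\Gamma=\{0\}$, where one simply takes $H=G$, is already covered by the same computation.)
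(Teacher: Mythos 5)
Your proof is correct and follows the same Fourier-analytic skeleton as the paper: define a strong spectrum, show its elements have bounded order, take the orthogonal complement of the group it generates as $H$, and estimate the representation count by isolating the main term at $\xi=0$ plus the nonnegative contribution from the rest of $\langle\Sigma\rangle$, controlling the tail by Parseval and a large choice of $m$. The one place you genuinely diverge is the proof that strong-spectrum characters have bounded order. The paper iterates the estimate $1-\cos(2\pi n\theta)\lesssim n^2(1-\cos 2\pi\theta)$ to conclude $|\hat 1_A(n\xi)|\sim\delta$ for all $1\leq n\ll \eps^{-1/2}$ and then invokes Parseval to show these values of $n\xi$ cannot all be distinct, forcing a collision $n\xi = n'\xi$ and hence a bounded order. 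You instead run a direct concentration argument: a character $\xi$ of order $t$ partitions $A$ into $t$ slabs of size at most $|G|/t$, and the lower bound $1-\cos\theta\gtrsim\theta^2$ plus Markov forces most of the mass of $A$ into an arc of $O(\kappa^{1/4}t)$ consecutive slabs, which contradicts $|A|\geq\delta|G|$ once $\kappa$ is small and $t$ is large. Both arguments are elementary and close in spirit; the paper's is slightly slicker (no arc bookkeeping), while yours makes the geometric mechanism more visible and avoids considering the orbit $\{n\xi\}$ at all. You are also right that the exponent in the statement is off by one: the representation count is $|G|^{2m-1}\sum_\xi |\hat 1_A(\xi)|^{2m} e(h\cdot\xi)$ in the paper's normalisation (not $|G|^{2m}$), and since the number of representations of any single $h$ can never exceed $|A|^{2m-1}\leq|G|^{2m-1}$, the intended conclusion is $\gg_\delta |G|^{2m-1}$; this harmless typo propagates through the other S\'ark\"ozy-type lemmas in the appendix but does not affect their applications in the body of the paper, where only the fact that the count is a positive fraction of the maximum is used.
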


\begin{remark} This should be compared with Bogulybov's theorem (or Theorem \ref{gr}(i)), in which one can take $m=2$, but the subgroup $H$ is replaced by a coset progression.
\end{remark}

\begin{proof} We use Fourier analysis.  By increasing $\delta$ if necessary, we may take $|A|=\delta|G|$.  Let $\hat G$ be the Pontryagin dual, consisting of all homomorphisms $x \mapsto \xi \cdot x$ from $G$ to $\R/\Z$.  We let $\eps > 0$ be a small quantity depending on $\delta$ to be chosen later, and introduce the spectrum
$$ \Sigma := \{ \xi \in \hat G: |\hat 1_A(\xi)|^2 > (1-\eps) \delta^2 \}$$
where 
$$ \hat 1_A(\xi) := \frac{1}{|G|} \sum_{x \in G} 1_A(x) e(-\xi \cdot x)$$
is the Fourier transform of $A$, and $e(x) := e^{2\pi i x}$.  We can write
$$ |\hat 1_A(\xi)|^2 := \frac{1}{|G|^2} \sum_{x,y \in A} \cos(2\pi (\xi \cdot (x-y)))$$
and
$$ \delta^2 - |\hat 1_A(\xi)|^2 := \frac{1}{|G|^2} \sum_{x,y \in A} (1 - \cos(2\pi (\xi \cdot (x-y)))).$$

Let $\xi \in \Sigma$.  Then from the elementary estimate
$$ 1-\cos(2\pi \theta) \sim \dist(\theta,\Z)^2$$
for any $\theta$, we have
$$ 1-\cos(2\pi n \theta) \lesssim n^2 (1 - \cos(2\pi \theta)$$
and hence
$$ \delta^2 - |\hat 1_A(n \xi)|^2 \ll n^2 \eps \delta^2$$
for any integer $n \geq 1$, and in particular that
$$ |\hat 1_A(n \xi)| \sim \delta$$
for all $1 \leq n \leq c / \sqrt{\eps}$ for some absolute constant $c > 0$.  Combining this with Plancherel's theorem, we see (if $\eps$ is small enough depending on $\delta$) that these $n\xi$ cannot all be distinct, and so we conclude that every $\xi \in \Sigma$ has order $O_\delta(1)$.  On the other hand, another application of Plancherel shows that $|\Sigma| \ll_\delta 1$.  We conclude that the subgroup $\langle \Sigma \rangle$ of the abelian group $\hat G$ generated by $\Sigma$ also has cardinality $O_\delta(1)$.  Thus, if we let 
$$H := \{ x \in G: \xi \cdot x = 0 \hbox{ for all } \xi \in \langle \Sigma \rangle \}$$
be the orthogonal complement of $\langle \Sigma \rangle$, then (by another application of Plancherel) we see that $|H| \gg_\delta |G|$.

Now let $m$ be a large integer depending on $\delta,\eps$ to be chosen later, and let $h \in H$.  The number of representations of $h$ as $a_1+\ldots+a_m-b_1-\ldots-b_m$, as is well known, can be expressed via the Fourier transform as
$$ |G|^{2m} \sum_{\xi \in \hat G} |\hat 1_A(\xi)|^{2m} e(h \cdot \xi).$$
The contribution of the $\xi=0$ term is $\delta^{2m} |G|^{2m}$.  The contribution of the other elements of $\Sigma$ is non-negative.  As for the remaining contributions, one can bound them by
$$ O( |G|^{2m} (1-\eps)^{m-1} \delta^{2m-2} \sum_{\xi \in \hat G} |\hat 1_A(\xi)|^2 ) = O( |G|^{2m} (1-\eps)^{m-1} \delta^{2m-1} )$$
thanks to Plancherel.  If $m$ is large enough, this error term is dominated by the main term, and the claim follows.
\end{proof}

\begin{lemma}[S\'ark\"ozy type theorem in progressions]\label{fstp}  Let $G = (G,+)$ be an abelian group, and let
$$ P = \{ n_1 v_1 + \ldots + n_r v_r: n_i \in \Z, |n_i| \leq N_i \hbox{ for all } 1 \leq i \leq r \}$$
be a proper generalised arithmetic progression in $G$ of rank $r$.  Let $A \subset P$ be such that $|A| \geq \delta |P|$ for some $0 < \delta < 1$.  Then there exists positive integers $1 \leq m, l \ll_{\delta,r} 1$ such that $P_l \subset mA - mA$, where $P_l$ is the generalised arithmetic progression
$$ P_l = \{ l n_1 v_1 + \ldots + l n_r v_r: n_i \in \Z, |n_i| \leq N_i / l^2 \hbox{ for all } 1 \leq i \leq r \}.$$
In fact, we have the stronger statement that each element $v$ of $P_l$ has $\gg_{\delta,r} |P|^{2m}$ representations of the form $v = a_1 + \ldots + a_m - b_1 - \ldots - b_m$ with $a_1,\ldots,a_m,b_1,\ldots,b_m \in A$.
\end{lemma}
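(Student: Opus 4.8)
The plan is to use the properness of $P$ to convert the statement into a purely combinatorial fact about dense subsets of a box in $\Z^r$, prove that fact by Fourier analysis in the spirit of Lemma~\ref{fsar}, and then transport the conclusion back to $P$.

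First I would reduce to a box. Let $\psi:\Z^r\to G$ be the homomorphism $\vec n\mapsto n_1v_1+\dots+n_rv_r$ and put $\tilde B:=\prod_{i=1}^r\{-N_i,\dots,N_i\}$; properness of $P$ says precisely that $\psi$ is injective on $\tilde B$ and maps it bijectively onto $P$, so in particular $|\tilde B|=|P|$. Set $\tilde A:=\psi^{-1}(A)\subseteq\tilde B$, so $|\tilde A|\ge\delta|\tilde B|$, and note $\psi^{-1}(P_l)=\tilde P_l:=\prod_i\{ln_i:|n_i|\le N_i/l^2\}$. If $(\vec a_1,\dots,\vec a_m,\vec b_1,\dots,\vec b_m)\in\tilde A^{\otimes 2m}$ satisfies $\sum_j\vec a_j-\sum_j\vec b_j=\vec v$ \emph{as an identity in $\Z^r$}, then applying $\psi$ gives a representation of $v=\psi(\vec v)$ in $G$ with all entries in $A$, and distinct tuples give distinct representations since $\psi$ is injective on $\tilde B\supseteq\tilde A$. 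Hence it suffices to produce $m,l\ll_{\delta,r}1$ such that every $\vec v\in\tilde P_l$ admits $\gg_{\delta,r}|\tilde B|^{2m}$ representations $\vec v=\sum_{j\le m}\vec a_j-\sum_{j\le m}\vec b_j$ with $\vec a_j,\vec b_j\in\tilde A$, the identity holding honestly in $\Z^r$.

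To prove this combinatorial claim I would run the Fourier argument of Lemma~\ref{fsar} on a finite abelian group $\Z_{\vec L}:=\prod_i\Z/L_i\Z$, choosing each $L_i$ to be a large multiple of $N_i$ — large enough that the $m$-fold sum-and-difference sets of $\tilde B$ inject into $\Z_{\vec L}$, so that any representation obtained over $\Z_{\vec L}$ is automatically honest in $\Z^r$. As in the proof of Lemma~\ref{fsar}, the representation count of $h$ expands as $|\Z_{\vec L}|^{2m-1}\sum_\xi|\hat 1_{\tilde A}(\xi)|^{2m}e(h\cdot\xi)$; one takes $m$ large to suppress off-spectrum frequencies, shows the near-maximal spectrum $\Sigma$ generates a subgroup of bounded index $d$ (using the $1-\cos$ estimate and Plancherel exactly as there), and concludes that $d\cdot\Z_{\vec L}\subseteq\Sigma^\perp$; then the dilated progression $\tilde P_l$ with $l=d$ lies in $\Sigma^\perp$, where every spectral contribution to the count is nonnegative, and the $\xi=0$ term dominates. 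Finally one transports back through $\psi$, using $|\tilde B|=|P|$ and $m=O_{\delta,r}(1)$, to obtain the asserted count of representations of each $v\in P_l$, and in particular $P_l\subseteq mA-mA$.

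The delicate point — where I expect the real difficulty — is that, unlike in Lemma~\ref{fsar}, the set $\tilde A$ occupies only a fixed but \emph{small} fraction (of order $m^{-r}$) of the ambient group $\Z_{\vec L}$, once the $L_i$ have been enlarged enough to eliminate wraparound; and the requirements ``the $\xi=0$ term dominates'' (which wants $m$ large) and ``$\Sigma$ generates a subgroup of bounded order'' (which wants the level $\varepsilon$ small relative to the ambient density, hence relative to $m^{-r}$) then push against one another. The way around this is to take the spectrum \emph{relative to the box}: compare $\hat1_{\tilde A}$ against the Fej\'er kernel $\widehat{1_{\tilde B}}$, so that the governing density is $|\tilde A|/|\tilde B|\ge\delta$, genuinely bounded below independently of $m$; the annihilator $\Sigma^\perp$ is then replaced by a Bohr set of bounded rank and of radius bounded below in terms of the $N_i$, inside which one fits a dilate $\tilde P_l$ (with $l\ll_{\delta,r}1$) by an elementary geometry-of-numbers and pigeonhole argument, just as one passes from a bounded-order subgroup to a progression in Lemma~\ref{fsar}. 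Alternatively, since the paper already appeals to \cite{szvu} for multidimensional S\'ark\"ozy-type theorems, one may simply invoke such a theorem (or the one-dimensional result \cite{sar}, \cite{lev} combined with an induction on the rank $r$ via fibering $\tilde B$ over one coordinate) in place of this Fourier step.
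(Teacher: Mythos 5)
Your proposal is essentially correct and, once you abandon the naive embedding into a large finite group $\Z_{\vec L}$ (where, as you rightly observe, the density collapses) in favour of the ``relative to the box'' version in your final paragraph, it coincides with the paper's argument: the paper runs Fourier analysis directly on $\Z^r$ with dual $(\R/\Z)^r$ and the spectrum $\Sigma:=\{\xi:|\hat 1_A(\xi)|^2>(1-\eps)|A|^2\}$ measured against $|A|\sim\delta|P|$ rather than against an enlarged ambient group, shows $\Sigma\subset\Gamma+R_s$ for a bounded-order subgroup $\Gamma$ of the torus and a small box $R_s$ (via the $1-\cos$ estimate plus a Cauchy--Schwarz/box-smoothing step), takes $l$ a multiple of $|\Gamma|$ so that $\cos(2\pi v\cdot\xi)\gg 1$ on $\Sigma$ for $v\in P_l$, and then chooses $m$ large enough for the Plancherel-bounded off-spectrum contribution to be dominated. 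Your alternative of invoking \cite{szvu} (or one-dimensional S\'ark\"ozy with induction on rank) is also explicitly acknowledged in the remark following the lemma, where the author notes a self-contained Fourier proof is given instead.
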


\begin{remark} This result can probably deduced from the powerful results in \cite{szvu} (after using Freiman isomorphisms to map the progression into the integers), however for sake of self-containedness we present a Fourier-analytic proof (analogous to the proof of Lemma \ref{fsar}) here.
\end{remark}

\begin{proof}  We will fix $\delta,r$ and allow all constants to depend on these parameters.  

It suffices to establish the case when $G = \Z^r$ and $v_1,\ldots,v_r$ is the standard basis of $\Z^r$, since the general case can then be obtained by applying the homomorphism $(n_1,\ldots,n_r) \mapsto n_1 v_1 + \ldots + n_r v_r$ from $\Z^r$ to $G$.  (Note that the presence of a kernel in this homomorphism will work in one's favour, since $P$ is proper.)  

The Pontryagin dual of $\Z^r$ is the torus $(\R/\Z)^r$, and we have the Fourier transform
$$ \hat 1_A(\xi) := \sum_{x \in A} e( -\xi \cdot x )$$
(note the conventions here are slightly different from that in the case of finite groups $G$).  

Let $\eps > 0$ be a small number (depending on $\delta,r$) to be chosen later, let $l \geq 1$ be a large integer (depending on $\eps, \delta, r$) to be chosen later; let $\eps' > 0$ be an even smaller number (depending on $l,\eps,\delta,r$) to be chosen later, and let $m \geq 1$ be a large integer (depending on $\eps',l,\eps,\delta,r$) to be chosen later.  We introduce the spectrum $\Sigma \subset (\R/\Z)^r$, defined by
$$ \Sigma := \{ \xi \in (\R/\Z)^r: |\hat 1_A(\xi)|^2 > (1-\eps) |A|^2 \}.$$
This is an open subset of $(\R/\Z)^r$.  Observe that it contains the box $R_{c\eps}$ for some small constant $c = c_r > 0$, where
$$ R_{s} := \{ (\xi_1,\ldots,\xi_r): \|\xi_i\| \leq s / N_i \hbox{ for all } 1 \leq i \leq r \}$$
and $\|\xi\|$ is the distance from $\xi \in \R/\Z$ to the origin.

Now let $\xi \in \Sigma$.  Arguing as in Lemma \ref{fsar} we see that
$$ |\hat 1_A(n\xi)| \sim |A|$$
for all integers $n$ with $|n| \leq c\eps^{-1/2}$ for some absolute constant $c$. Thus we can find bounded coefficients $|\alpha_n| \leq 1$ for $|n| \leq c \eps^{-1/2}$ such that
$$ |\sum_{|n| \leq c \eps^{-1/2}} \alpha_n \hat 1_A(n \xi)| \gg \eps^{-1/2} |A|.$$
The left-hand side can be rearranged as
$$ |\sum_{x \in A} \sum_{|n| \leq c \eps^{-1/2}} \alpha_n e(-n \xi \cdot x)| \gg \eps^{-1/2} |A|.$$
By Cauchy-Schwarz we conclude that
$$ \sum_{x \in P} |\sum_{|n| \leq c \eps^{-1/2}} \alpha_n e(-n \xi \cdot x)|^2 \gg \eps^{-1} |A| \sim \eps^{-1} |P|.$$
Observe that if $0 < s \leq 1$ is a sufficiently small absolute constant, then
$$ |\int_{R_s} e(\xi \cdot x)\ d\xi| \gg_s 1/|P|$$
for all $x \in P$.  We conclude that
$$ \sum_{x \in \Z^r} |\int_{R_s} e(\xi \cdot x)\ d\xi|^2 
|\sum_{|n| \leq c \eps^{-1/2}} \alpha_n e(-n \xi \cdot x)|^2 \gg_s \eps^{-1} / |P|.$$
By Fourier analysis, the left-hand side can be rearranged as
$$ \sum_{n, n': |n|, |n'| \leq c\eps^{-1/2}} \alpha_n \overline{\alpha_{n'}} \operatorname{mes}( (n\xi + R_s) \cap (n'\xi + R_s) ).$$
The summand is bounded by $O(1/|P|)$, and vanishes unless $(n-n')\xi \in R_s$.  We conclude that
$$ | \{ (n,n'): |n|, |n'| \leq c\eps^{-1/2}; (n-n') \xi \in R_s \} | \gg_s \eps^{-1}.$$
If $\eps$ is sufficiently small, we conclude that there exists $1 \leq n'' \ll \eps^{-1/2}$ such that $n'' \xi \in R_s$.  If we let $M = O_\eps(1)$ be the least common multiple of all integers $1 \ll n'' \ll \eps^{-1/2}$, and let $\Gamma \subset (\R/\Z)^r$ be the finite subgroup of the torus $(\R/\Z)^r$ consisting of the $M^{th}$ roots of unity, we conclude that
\begin{equation}\label{sigmal}
\Sigma \subset \Gamma + R_s.
\end{equation}
We can require that $l$ is a multiple of $M$.

Now let $v \in P_l$.  The number of representations of $v$ as $a_1+\ldots+a_m-b_1-\ldots-b_m$ can be expressed via Fourier analysis as
$$ \int_{(\R/\Z)^r} |\hat 1_A(\xi)|^{2m} e( v \cdot \xi )\ d\xi;$$
since this number is clearly real, we can also express it as
$$ \int_{(\R/\Z)^r} |\hat 1_A(\xi)|^{2m} \cos( 2\pi v \cdot \xi )\ d\xi.$$
We consider the contributions of various portions of this integral.  For $\xi$ in the rectangle $R_{\eps'}$, we have $|\hat 1_A(\xi)| \geq (1 - O(\eps')) |A|$ and $\cos(2\pi h \cdot \xi) \gg 1$, and so the contribution of this rectangle is
$$ \gg (1 - O(\eps'))^{2m} |A|^{2m} |R_{\eps'}| \gg_{\eps'} (1 - O(\eps'))^{m} |A|^{2m-1}.$$
Next, for $\xi \in \Sigma \backslash R_{\eps'}$, we see from \eqref{sigmal} and the definition of $P_l$ that $\cos(2\pi v \cdot \xi) \gg 1$, and so the contribution of this region is non-negative.  Finally, we turn to the contribution when $\xi \not \in \Sigma$, which we can bound by
$$ O( (1-\eps)^{m-1} |A|^{2m-2} \int_{(\R/\Z)^r} |\hat 1_A(\xi)|^2\ d\xi ) = O( (1-\eps)^{m-1} |A|^{2m-1} )$$
thanks to Plancherel's theorem.  If we choose $m$ large enough depending on $\eps, \eps'$, the error term is dominated by the main term, and the claim follows.
\end{proof}

We now unify the above two lemmas into a single proposition.

\begin{proposition}[S\'ark\"ozy-type theorem in coset progressions]\label{sarkozy} Let $G = (G,+)$ be an abelian group, and let
$$ H+P = H+\{ n_1 v_1 + \ldots + n_r v_r: n_i \in \Z, |n_i| \leq N_i \hbox{ for all } 1 \leq i \leq r \}$$
be a proper generalised arithmetic progression in $G$ of rank $r$.  Let $A \subset H+P$ be such that $|A| \geq \delta |H| |P|$ for some $0 < \delta < 1$.  Then there exists a subgroup $H'$ of $H$ with cardinality $|H'| \gg_{\delta,r} |H|$ and positive integers $1 \leq m, l \ll_{\delta,r} 1$ such that $H'+P_l \subset mA - mA$, where $P_l$ is as in Lemma \ref{fstp}.  
In fact, we have the stronger statement that each element $h+v$ of the coset progression $H'+P_l$ has $\gg_{\delta,r} |H|^{2m} |P|^{2m}$ representations of the form $h+v = a_1 + \ldots + a_m - b_1 - \ldots - b_m$ with $a_1,\ldots,a_m,b_1,\ldots,b_m \in A$.
\end{proposition}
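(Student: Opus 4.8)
\emph{Proof proposal.} The plan is to run the Fourier arguments behind Lemma \ref{fsar} and Lemma \ref{fstp} \emph{simultaneously}, on the Pontryagin dual $\hat H \times (\R/\Z)^r$ of the model group $H \times \Z^r$. Since $H+P$ is proper, the group homomorphism $\psi\colon H\times\Z^r\to G$ sending $(h,n_1,\dots,n_r)$ to $h+n_1v_1+\dots+n_rv_r$ is injective on $H\times B$, where $B:=\{(n_1,\dots,n_r):|n_i|\le N_i\}$; put $\tilde A:=\psi^{-1}(A)\subset H\times B$, so $|\tilde A|=|A|\ge\delta|H||B|$ and $|B|=|P|$. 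Because $\psi$ is a homomorphism, any representation of $(h,0)+(0,l\mathbf n)$ as $\sum_{i=1}^m\tilde a_i-\sum_{i=1}^m\tilde b_i$ with $\tilde a_i,\tilde b_i\in\tilde A$ maps to a representation of $h+\sum_j ln_jv_j$ by elements of $A$, and distinct tuples map to distinct tuples (again by properness of $H+P$, which makes $a_i\mapsto\tilde a_i$ a bijection on $A$); so it suffices to prove the representation bound inside $H\times\Z^r$. From now on I work there, with the (unnormalised) transform $G(\chi,\xi):=\sum_{(h,\mathbf n)\in\tilde A}\chi(h)\,e(\xi\cdot\mathbf n)$ for $(\chi,\xi)\in\hat H\times(\R/\Z)^r$, so that $G(1,0)=|\tilde A|$.

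Let $\eps>0$ be small (to be chosen depending on $\delta,r$) and set $\Sigma:=\{(\chi,\xi):|G(\chi,\xi)|^2>(1-\eps)|\tilde A|^2\}$. Writing $\chi(w)=e(\theta_\chi(w))$ and using $|\tilde A|^2-|G(\chi,\xi)|^2=\sum_{\tilde a,\tilde a'}\bigl(1-\cos 2\pi[\theta_\chi(a_H-a'_H)+\xi\cdot(\mathbf n_a-\mathbf n_{a'})]\bigr)$ together with the elementary bound $1-\cos(2\pi n\theta)\le n^2(1-\cos 2\pi\theta)$, applied termwise, gives $|G(n\chi,n\xi)|\sim|\tilde A|$ whenever $(\chi,\xi)\in\Sigma$ and $|n|\ll\eps^{-1/2}$. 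Two structural facts follow, exactly as in the two source lemmas. First, projecting to $\hat H$: any nonempty fibre of $\{|G|^2>\tfrac12|\tilde A|^2\}$ above a fixed character contains a fixed small box $R_{c'}$ (by the derivative estimate $|G(\chi,\xi+\eta)-G(\chi,\xi)|\ll_r c'|\tilde A|$ for $\eta\in R_{c'}$), so by Plancherel, $\tfrac1{|H|}\sum_\chi\int|G(\chi,\xi)|^2\,d\xi=|\tilde A|$, there are only $O_{\delta,r}(1)$ such characters; since $n\chi$ is one of them for all $|n|\ll\eps^{-1/2}$, every $\chi$ occurring in $\Sigma$ has order $O_{\delta,r}(1)$, and the subgroup $K\le\hat H$ they generate satisfies $|K|=O_{\delta,r}(1)$. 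Second, projecting to $(\R/\Z)^r$ and repeating Lemma \ref{fstp} verbatim (the $\hat H$-coordinate is inert), for each $(\chi,\xi)\in\Sigma$ some multiple $n''\xi$ with $1\le n''\ll\eps^{-1/2}$ lies in the small box $R_s:=\{\xi:\|\xi_i\|\le s/N_i\ \forall i\}$ ($s$ an absolute constant), whence $\Sigma\subset K\times(\Gamma+R_s)$ with $\Gamma$ the $M$-th roots of unity in $(\R/\Z)^r$ and $M=O_{\delta,r}(1)$. I then take $H':=K^\perp\le H$, so $|H'|=|H|/|K|\gg_{\delta,r}|H|$, and demand that $l$ be a (bounded) positive multiple of $M$.

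It remains to count. For $h\in H'$ and $v=l\sum_jn_jv_j\in P_l$, Fourier inversion on $H\times\Z^r$ gives
\[
\#\{\mathrm{reps}\}=\frac1{|H|}\sum_{\chi\in\hat H}\int_{(\R/\Z)^r}|G(\chi,\xi)|^{2m}\,\overline{\chi(h)}\,e(-\xi\cdot(ln_1,\dots,ln_r))\,d\xi .
\]
The main term is $\chi=1$, $\xi\in R_{\eps'}$ (a box with $\|\xi_i\|\le\eps'/N_i$, which lies in $\Sigma$): there $|G(1,\xi)|\ge(1-O(\eps'))|\tilde A|$ and all phases are $\gg1$, giving $\gg_{\delta,r}|H|^{2m}|P|^{2m}$ once $\eps'$ is small and $m$ large (using $\operatorname{mes}(R_{\eps'})\asymp_{\eps',r}1/|P|$ and $|\tilde A|\ge\delta|H||P|$). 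The remaining $(\chi,\xi)\in\Sigma$ contribute nonnegatively: write $\xi=\gamma+\eta$ with $\gamma\in\Gamma$, $\eta\in R_s$; then $M\mid l$ and $\gamma\in(\tfrac1M\Z/\Z)^r$ force $e(-\xi\cdot(ln_1,\dots,ln_r))=e(-\eta\cdot(ln_1,\dots,ln_r))$, whose real part is $\gg1$ since $|ln_j\eta_j|\le s/l$, while $\chi(h)=1$ because $\chi\in K$, $h\in K^\perp$; so each summand is a nonnegative real times $|G(\chi,\xi)|^{2m}\ge0$. Finally the off-spectrum part $(\chi,\xi)\notin\Sigma$ is bounded in modulus by $(1-\eps)^{m-1}|\tilde A|^{2m-2}\cdot\tfrac1{|H|}\sum_\chi\int|G|^2=(1-\eps)^{m-1}|\tilde A|^{2m-1}$ by Plancherel, which is dominated by the main term once $m$ is large enough in terms of $\eps,\eps',\delta,r$ (all themselves $O_{\delta,r}(1)$). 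This yields the stated bound on the number of representations, hence $H'+P_l\subset mA-mA$.

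The only nonroutine point — the ``main obstacle'' — is that one must arrange both structural conclusions (torsion in the $\hat H$-direction, ``rational part plus small box'' in the $(\R/\Z)^r$-direction) to hold for \emph{one and the same} spectrum $\Sigma\subset\hat H\times(\R/\Z)^r$; this works precisely because both descend from the \emph{single} dilation estimate $|G(n\chi,n\xi)|\sim|\tilde A|$ for $|n|\ll\eps^{-1/2}$. Everything else is a merger of the bookkeeping already done in Lemmas \ref{fsar} and \ref{fstp}: the reduction to $H\times\Z^r$ via properness, and the choice of the parameters $\eps$, $l$, $\eps'$, $m$ (in that order of dependence), for which I would follow the quantitative arrangement in the proof of Lemma \ref{fstp} essentially unchanged.
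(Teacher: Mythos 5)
Your proposal follows essentially the same route as the paper's own sketch: work on the model group $H\times\Z^r$ using properness, define the spectrum $\Sigma\subset\hat H\times(\R/\Z)^r$ at level $(1-\eps)$, derive from the dilation estimate $|G(n\chi,n\xi)|\sim|\tilde A|$ both the bounded-order/bounded-cardinality structure in the $\hat H$ direction and the ``$\Gamma$ plus small box'' structure in the $(\R/\Z)^r$ direction, set $H'=K^\perp$, and then count representations by the three-region Fourier decomposition with $l$ a multiple of $M$ and $m$ taken large. This is exactly the pastiche of Lemmas \ref{fsar} and \ref{fstp} that the paper carries out, and your organization is correct. One small remark: the main-term computation you record, $\tfrac1{|H|}\cdot|\tilde A|^{2m}\cdot\operatorname{mes}(R_{\eps'})\asymp_{\eps',r}|\tilde A|^{2m}/(|H||P|)$, actually yields $\gg_{\delta,r}(|H||P|)^{2m-1}$ rather than $|H|^{2m}|P|^{2m}$; the latter exceeds the trivial upper bound $|A|^{2m-1}$ and is a typographical slip (present already in the statements of Lemmas \ref{fsar} and \ref{fstp} in the paper), so your figure should be read as the $(|H||P|)^{2m-1}$ that your displayed calculation delivers. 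Also, in the step ``repeating Lemma \ref{fstp} verbatim (the $\hat H$-coordinate is inert),'' the $\hat H$-coordinate is not quite inert --- carrying out the box/Plancherel argument on $H\times\Z^r$ naturally forces $\chi^{n''}=1$ as well as $n''\xi\in R_s$ --- but this only makes the conclusion stronger and coincides with the paper's $\Sigma\subset\Gamma+(\{0\}\times R_s)$ containment.
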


\begin{proof}  The arguments are basically a pastiche of those used to prove Lemma \ref{fsar} and Lemma \ref{fstp}.  As there are no new ideas here, we give only a sketch of the proof.  We allow all implied constants to depend on $\delta,r$.

As in Lemma \ref{fstp}, we may assume that $G = H \times \Z^r$, and that $v_1,\ldots,v_r$ are the basis vectors of $\Z^r$.  The Pontryagin dual is then $\hat G = \hat H \times (\R/\Z)^r$, with Fourier transform
$$ \hat 1_A( \xi, \eta ) := \frac{1}{|H|} \sum_{(h,x) \in A} e( - \xi \cdot h ) e( - \eta \cdot x )$$
for $\xi \in \hat H$ and $\eta \in (\R/\Z)^r$.  We give $\hat G$ the obvious Haar measure $dm$, being the product of counting measure on $\hat H$ and normalised Haar measure on $(\R/\Z)^r$.

Now let $\eps > 0$ be a small quantity depending on $\delta$ to be chosen later, and let $\Sigma \subset \hat G$ be the set
$$ \Sigma := \{ (\xi,\eta) \in \hat G: |\hat 1_A(\xi,\eta)|^2 > (1-\eps) |A|^2 / |H|^2 \}.$$
Repeating the arguments in Lemma \ref{fstp} with minor changes, we see that if $\eps$ is sufficiently small depending on $\delta$, then
$$ \Sigma \subset \Gamma + (\{0\} \times R_s)$$
for some small absolute constant $s > 0$, where $\Gamma := \{ (\xi,\eta) \in \hat G: M(\xi,\eta) = 0 \}$ and $1 \leq M \ll_\eps 1$ is an integer.

The arguments in Lemma \ref{fstp} also show that if $(\xi_1,\eta_1), \ldots, (\xi_n,\eta_n) \in \Sigma$ are such that $(\xi_i,\eta_i) - (\xi_j,\eta_j) \not \in \{0\} \times R_s$ for all $1 \leq i < j \leq n$, then $n \ll_\eps 1$.  From this we conclude in particular that the projection $\pi(\Sigma)$ from $\hat H \times (\R/\Z)^r$ to $\hat H$ has cardinality $O_\eps(1)$.  By the preceding discussion, we also see that every element of $\pi(\Sigma)$ has order $O_\eps(1)$.  We conclude that the group $\langle \pi(\Sigma) \rangle$ generated by $\pi(\Sigma)$ is a subgroup of $\hat H$ of cardinality $O_\eps(1)$.

Let $H'$ be the orthogonal complement of $\langle \pi(\Sigma) \rangle$, then $|H'| \gg_\eps |H|$.  Now let $l$ be a multiple of $M$ depending on $\eps$ to be chosen later, let $\eps' > 0$ be a small quantity depending on $l,\eps$ to be chosen later, and $m$ to be a large integer depending on $\eps', l, \eps$ to be chosen later.  We let $(h,v)$ be an element of $H'+P_l$.  The number of representations of $(h,v)$ of the form $a_1+\ldots+a_m-b_1-\ldots-b_m$ with $a_1,\ldots,b_m \in A$ can be expressed via Fourier analysis as
$$ |H|^{2m} \sum_{\xi \in \hat H} \int_{(\R/\Z)^r} |\hat 1_A(\xi,\eta)|^{2m} e( \xi \cdot h ) e( \eta \cdot v )\ d\eta.$$
As in Lemma \ref{fstp} (decomposing into the regions $(\xi,\eta) \in \{0\} \times R_{\eps'}$, $(\xi,\eta) \in \Sigma \backslash (\{0\} \times R_{\eps'})$, and $(\xi,\eta) \not \in \Sigma$) one can show that this expression is $\gg_{\eps,\eps'} |A|^{2m}$ if $m$ is large enough, and the claim follows.
\end{proof}

\section{A Balog-Szemer\'edi type lemma}

Given a bipartite graph $G = (V,W,E)$ between two vertex sets $V, W$ (thus $E$ is a subset of $V \times W$), we define the \emph{edge density} $\delta$ of the graph as $\delta := |E|/|V| |W|$, and say that the graph is \emph{$\eps$-regular} for some $\eps > 0$ if we have
$$ \left| |E \cap (V' \times W')| - \delta |V'| |W'| \right| \leq \eps |V| |W|$$
for all $V' \subset V$ and $W' \subset W$, or equivalently if
\begin{equation}\label{vwd}
 |\E_{v \in V, w \in W} (1_E(v,w) - \delta) f(v) g(w)| \leq \eps
\end{equation}
for all $f, g$ bounded between $0$ and $1$. 

We recall a $k$-partite formulation of the famous Szemer\'edi regularity lemma:

\begin{lemma}[Szemer\'edi regularity lemma]\label{reg-lem}  Let $V_1,\ldots,V_k$ be disjoint finite sets, and for each $1 \leq i < j \leq k$ let $E_{ij} \subset V_i \times V_j$ be a bipartite graph connecting $V_i$ and $V_j$.  Let $\eps > 0$.  Then for each $1 \leq i \leq k$ we can partition $V_i = V_{i,1} \cup \ldots V_{i,M_i}$ with $M_i = O_{k,\eps}(1)$, where
\begin{itemize}
\item For all $1 \leq \alpha,\beta \leq M_i$, we have $|V_{i,\alpha}| \sim |V_{i,\beta}|$ (and in particular, $|V_{i,a}| \sim |V_i|/M_i \sim_{k,\eps} |V_i|$);
\item For all $1 \leq i < j \leq k$, and for $1-O(\eps)$ of the pairs of integers $1 \leq \alpha \leq M_i$ and $1 \leq \beta \leq M_j$, the restriction of $E_{ij}$ to $V_{i,\alpha}$, $V_{j,\beta}$ is $\eps$-regular.
\end{itemize}
\end{lemma}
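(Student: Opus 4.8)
\textbf{Proposal for Lemma \ref{reg-lem}.}
This is a standard $k$-partite variant of Szemer\'edi's regularity lemma, and I would prove it by the usual energy-increment argument, adapted so that all the partitions are refined simultaneously. First I would set up the relevant energy: for a tuple of partitions $\mathcal P_i$ of $V_i$ into cells $V_{i,1},\ldots,V_{i,M_i}$, let $\delta_{ij,\alpha\beta}:=|E_{ij}\cap(V_{i,\alpha}\times V_{j,\beta})|/(|V_{i,\alpha}|\,|V_{j,\beta}|)$ be the density between two cells, and define the \emph{index}
\[
q(\mathcal P_1,\ldots,\mathcal P_k):=\sum_{1\le i<j\le k}\;\sum_{\alpha=1}^{M_i}\sum_{\beta=1}^{M_j}\frac{|V_{i,\alpha}|\,|V_{j,\beta}|}{|V_i|\,|V_j|}\,\delta_{ij,\alpha\beta}^2 .
\]
Since for each pair $i<j$ the inner double sum is a convex combination of numbers in $[0,1]$, we have $0\le q\le\binom k2$, and a routine application of the Cauchy--Schwarz inequality (convexity of $t\mapsto t^2$) shows that refining any one of the $\mathcal P_i$ cannot decrease $q$.

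The heart of the argument is the usual dichotomy. Suppose the current tuple fails the conclusion, i.e.\ there is a pair $i<j$ for which more than an $\eps$-fraction of the cell-pairs $(V_{i,\alpha},V_{j,\beta})$ are not $\eps$-regular. For each such irregular pair, the definition of $\eps$-regularity supplies witnessing subsets $V'_{i,\alpha}\subset V_{i,\alpha}$ and $V'_{j,\beta}\subset V_{j,\beta}$ on which the density defect exceeds $\eps|V_{i,\alpha}|\,|V_{j,\beta}|$. For each index $i$ and each cell $V_{i,\alpha}$, collect all the witness sets arising (from all irregular pairs, over all $j$) inside $V_{i,\alpha}$, and pass to the common refinement; since the number of such witnesses is bounded in terms of the current $M_\bullet$, this produces a refinement $\mathcal P'_i$ of $\mathcal P_i$ with a bounded blow-up in the number of cells. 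The standard defect-form-of-Cauchy--Schwarz computation, carried out cell by cell and summed over the $\gg \eps M_iM_j$ irregular pairs, then shows $q(\mathcal P'_1,\ldots,\mathcal P'_k)\ge q(\mathcal P_1,\ldots,\mathcal P_k)+c_k\,\eps^{5}$ for some $c_k>0$ depending only on $k$ (the contributions of the pairs other than $(i,j)$ only help, by monotonicity).

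Starting from the trivial partitions and iterating: as long as the conclusion fails we gain $\gg_k\eps^5$ in $q$, so since $q\le\binom k2$ the process stops after $O_k(\eps^{-5})$ steps, and as each step multiplies every $M_i$ by a factor bounded in terms of $k,\eps$, we end with $M_i=O_{k,\eps}(1)$ cells and every pair $(i,j)$ having at most an $\eps$-fraction of irregular cell-pairs. It remains to equalize cell sizes: fix a large $t=t(k,\eps)$, subdivide each cell $V_{i,\alpha}$ into sub-cells of size $\lceil|V_i|/t\rceil$ together with a leftover of size $<|V_i|/t$, dump all leftovers of $V_i$ into one exceptional cell of size $<M_i|V_i|/t$, and (if desired) absorb that exceptional cell into an adjacent regular one. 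Since $\eps$-regularity is inherited, with a slightly worse constant, by sub-pairs whose relative density is bounded below, and since for $t$ large the exceptional cells account for a negligible fraction of all cell-pairs, the new partitions still satisfy the regularity conclusion with parameters $O(\eps)$, and now all non-exceptional cells of a given $V_i$ have size within a factor $1+O(1/t)$ of each other, hence $\sim|V_i|/M_i$. Running the whole argument with $\eps$ replaced by a small constant multiple of itself then yields exactly the stated form.

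The conceptual content is entirely classical; the only real work is bookkeeping. The point requiring the most care is the second step, where all the $V_i$ must be refined at once to handle every pair $(i,j)$ simultaneously while still extracting the $\gg_k\eps^5$ index gain; a secondary nuisance is tracking, in the final clean-up, how the irregular fraction and the regularity constant degrade as functions of $t$. Neither presents a genuine obstacle, which is why (as noted in the introduction) we are content to invoke this lemma rather than belabour its proof.
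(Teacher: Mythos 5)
The paper does not prove this lemma; it simply recalls it as a known multipartite formulation of Szemer\'edi's regularity lemma (citing it implicitly as folklore), so there is no in-paper argument to compare your sketch against. Your proposal is the standard energy-increment proof and is correct in outline: the index $q$, its boundedness by $\binom{k}{2}$, monotonicity under refinement, and the $\gg_k\eps^5$ increment from a pair with many irregular cell-pairs are all the right ingredients, and running the $\binom{k}{2}$ pairs through a single shared index is exactly how one handles the multipartite setting.

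The one place where your sketch does not quite close as written is the final equalization step. If you only equalize at the end, the cells produced by iterated common refinement of witness sets may have wildly unequal sizes (anywhere between $1$ and $|V_i|$), and the slicing lemma only transfers $\eps$-regularity to sub-pairs at a cost of roughly $\eps/\gamma$, where $\gamma$ is the relative size of the sub-cell in its parent. If a parent cell is close to all of $V_i$ while the target sub-cell size is $|V_i|/t$, the factor $\gamma\approx 1/t$ can be tiny and the inherited regularity constant blows up; choosing $t$ small enough to avoid this conflicts with choosing $t$ large enough to make the leftovers negligible. Moreover, the increment argument as you state it measures the irregular pairs by \emph{count}, whereas the quantity $q$ gains proportionally to their \emph{weighted} measure, and these only agree once cells are comparable in size. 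The standard fix is to maintain an equipartition (with a small exceptional set) throughout the iteration, chopping each refinement into equal pieces and dumping the leftover into the exceptional set at every step; alternatively, one can iterate purely on the weighted irregular fraction, then at the end discard the cells of size $\ll \eps|V_i|/M_i$ and redistribute their elements proportionally among the remaining cells, which keeps cell sizes within an $O_{k,\eps}(1)$ factor without any slicing. You flag the cleanup as a ``secondary nuisance'', which is fair, but as written the subdivide-and-absorb step would not preserve $\eps$-regularity without one of these modifications.
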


We now conclude a Balog-Szemer\'edi type result in noncommutative groups.

\begin{proposition}[Balog-Szemer\'edi type lemma]\label{bz-prop}  Let $A$ be a finite non-empty subset of a multiplicative group $G = (G,\cdot)$ such that $|A \cdot A^{-1}| \leq K |A|$ for some $K > 0$, let $k_0 \geq 1$ be an integer, and let $\eps > 0$.  Then there exists a subset $A'$ of $A \cdot A^{-1}$ with $|A'| \gg_{K,k_0,\eps} |A|$ such that for every $1 \leq k \leq k_0$, there are at least $(1-\eps) |A'|^{2k}$ tuples $(a_1,\ldots,a_{2k}) \in (A')^{\otimes 2k}$ such that
$$ a_1 a_2^{-1} a_3 a_4^{-1} \ldots a_{2k-1} a_{2k}^{-1} \in A \cdot A^{-1}.$$
\end{proposition}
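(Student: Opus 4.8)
The plan is to run a Balog--Szemer\'edi--Gowers type argument, applying the Szemer\'edi regularity lemma (via Lemma \ref{reg-lem}) so as to handle all $1 \le k \le k_0$ at once. The first step is to extract a dense graph of ``popular'' products. Writing $r(s) := |\{(a,b) \in A \times A : ab^{-1} = s\}|$, the hypothesis $|A \cdot A^{-1}| \le K|A|$ and Cauchy--Schwarz give the multiplicative energy bound $\sum_s r(s)^2 \ge |A|^4/|A A^{-1}| \ge |A|^3/K$; discarding the products with $r(s) < |A|/(2K)$ costs at most half of this, so the set $S_0 := \{ s \in A A^{-1} : r(s) \ge |A|/(2K)\}$ (which is symmetric, since $r(s) = r(s^{-1})$) satisfies $\#\{(a,b) \in A^2 : ab^{-1} \in S_0\} \gg_K |A|^2$ and $|S_0| \gg_K |A|$. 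Equivalently, the graph $\Gamma$ on vertex set $A$ with $u \sim v \iff uv^{-1} \in S_0$ is symmetric of edge density $\gg_K 1$.

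The crucial algebraic observation is a telescoping identity. Suppose $A_\alpha, A_\beta \subseteq A$ and $u_0 \in A_\alpha, u_1 \in A_\beta, u_2 \in A_\alpha, \dots$ is an alternating walk with each consecutive pair $\Gamma$-adjacent. Setting $s_i := u_{i-1} u_i^{-1}$ for odd $i$ and $s_i := u_i u_{i-1}^{-1}$ for even $i$, each $s_i$ lies in $\{ xy^{-1} : x \in A_\alpha,\ y \in A_\beta,\ x \sim y\} \subseteq S_0$, while the alternating product telescopes:
$$ s_1 s_2^{-1} s_3 s_4^{-1} \cdots s_{2k-1} s_{2k}^{-1} = u_0 u_{2k}^{-1} \in A \cdot A^{-1} $$
(both $u_0$ and $u_{2k}$ lying in $A_\alpha \subseteq A$). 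Thus every alternating walk of length $2k$ produces a ``good'' tuple, and it remains to choose $A' \subseteq S_0$ with $|A'| \gg_{K,k_0,\eps}|A|$ so that almost every tuple in $(A')^{\otimes 2k}$ arises this way, for all $k \le k_0$. To this end I would apply the regularity lemma to $\Gamma$ with a parameter $\eps'$ chosen small in terms of $\eps, k_0, K$, obtaining a partition into boundedly many cells; among the $\ge (1 - O(\eps'))$-fraction of regular cell-pairs, those of density $\gg_K 1$ carry at least half the edges, so by averaging there is a pair $(A_\alpha, A_\beta)$ with $\alpha \ne \beta$ that is $\eps'$-regular of density $\delta \gg_K 1$, with $|A_\alpha| \sim |A_\beta| \gg_{K,\eps'} |A|$. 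I would then take
$$ A' := \{ xy^{-1} : x \in A_\alpha,\ y \in A_\beta,\ x \sim y,\ r_{A_\alpha,A_\beta}(xy^{-1}) \ge (\delta/2)|A_\alpha| \}, $$
where $r_{A_\alpha,A_\beta}(s)$ counts representations $s = xy^{-1}$ with $x \in A_\alpha, y \in A_\beta$; since most of the $\delta|A_\alpha||A_\beta|$ edge-mass of the pair lies on such ``popular'' labels, $|A'| \gg_{K,k_0,\eps}|A|$, and by construction $A' \subseteq S_0 \subseteq A A^{-1}$.

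For the conclusion, fix $k \le k_0$ and count the alternating walks $u_0 \in A_\alpha, u_1 \in A_\beta, \dots, u_{2k} \in A_\alpha$ all of whose labels lie in $A'$: the counting lemma for $\eps'$-regular pairs pins this count down to within an $O_k(\eps')$ relative error, while the number of walks realising a prescribed tuple is at most $|A_\alpha|$ (once $u_1$ is chosen the rest is determined), with near-equality for almost every realised tuple; a Cauchy--Schwarz (second-moment) argument then forces all but an $\eps$-fraction of the tuples in $(A')^{\otimes 2k}$ to be realised by a walk, hence good. As the pair $(A_\alpha, A_\beta)$ and the set $A'$ were fixed once and for all, this holds simultaneously for every $1 \le k \le k_0$. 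The main obstacle is precisely this counting step: an alternating product of $2k$ elements of $A'$ lives a priori only in the vastly larger set $(A A^{-1})^{O(k)}$, and driving it back into $A A^{-1}$ for almost every tuple is the Balog--Szemer\'edi--Gowers phenomenon, requiring the pseudorandomness of a regular pair (not mere density) to guarantee that the telescoping representation above exists for almost every tuple. Obtaining the sharp loss $(1-\eps)|A'|^{2k}$ rather than merely a positive proportion is what dictates the multiplicity-thresholded definition of $A'$ and the precise second-moment bookkeeping, and the parameters must be organised hierarchically ($\eps' \ll \eps, 1/k_0, 1/K$) for everything to fit together.
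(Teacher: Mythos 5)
Your approach is genuinely different in structure from the paper's, and the difference matters in a way that leaves a real gap precisely at the step you yourself flag as ``the main obstacle.''

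The paper also begins by extracting popular quotients (your $S_0$ is the paper's $D$), and the telescoping identity you write is the same as the one the paper uses with $d_i \leftrightarrow s_i$ and $a_{j+1} \leftrightarrow u_j$. But the paper then applies the regularity lemma to a \emph{tripartite} incidence structure with vertex classes $V_0 := D$, $V_1 := V_2 := A$ and bipartite graphs $E_{01}, E_{02} \subset D \times A$ given by $(d,a)$ adjacent when $d^{-1}a$ (resp.\ $da$) lies in $A$; the eventual $A'$ is a regularity \emph{cell} $V_{0,\alpha_0}$ of $D$. You instead regularize the graph $\Gamma$ on $A$ (with $u \sim v$ iff $uv^{-1} \in S_0$) and take $A'$ to be the popular edge \emph{labels} of one regular pair $(A_\alpha, A_\beta)$. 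These two uses of the regularity lemma are not equivalent, and the difference is what breaks the counting.

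Concretely, fix $\vec s = (s_1,\ldots,s_{2k}) \in (A')^{\otimes 2k}$ and let $N(\vec s)$ be the number of $u_0 \in A_\alpha$ for which the prescribed-label walk $u_1 := s_1^{-1}u_0,\ u_2 := s_2 u_1,\ldots$ stays inside $A_\alpha \cup A_\beta$. Once $u_0$ is chosen, the whole walk is determined, so
$$N(\vec s) = |A_\alpha \cap s_1 A_\beta \cap s_1 s_2^{-1} A_\alpha \cap s_1 s_2^{-1} s_3 A_\beta \cap \cdots|.$$
This is a count of intersections of \emph{group translates} of the cells $A_\alpha, A_\beta$, not an edge or homomorphism count in the bipartite graph $\Gamma|_{A_\alpha\times A_\beta}$. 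Edge-regularity of $\Gamma$ gives no control over such intersections, so the appeal to ``the counting lemma for $\eps'$-regular pairs'' does not apply to $N(\vec s)$ itself, only to the average $\sum_{\vec s} N(\vec s)$ (which is a genuine walk count). Similarly, $\sum_{\vec s} N(\vec s)^2$ expands as $\sum_g W_g$, where $W_g$ counts walks $(u_0,\ldots,u_{2k})$ for which the right-translated walk $(u_0 g,\ldots, u_{2k}g)$ is also admissible (two walks with the same label tuple differ by a right translation $g = u_0^{-1}u_0'$, since the labels $u_{i-1}u_i^{-1}$ are invariant under right multiplication). Controlling $W_g$ uniformly in $g$ would require regularity of $\Gamma'$ restricted to the shrinking vertex sets $A_\alpha \cap A_\alpha g^{-1}$, $A_\beta \cap A_\beta g^{-1}$, and regularity of a pair says nothing about arbitrary (possibly tiny) induced subpairs. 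So the ``precise second-moment bookkeeping'' you invoke is not available from the tools cited.

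There is also a quantitative mismatch with your definition of $A'$. The threshold $(\delta/2)|A_\alpha|$ is far too high: the number of distinct labels in $A_\alpha \times A_\beta$ can be as large as order $K|A| = KM|A_\alpha|$ (with $M = O_{K,k_0,\eps}(1)$ the number of cells), so removing all edges of multiplicity below $(\delta/2)|A_\alpha|$ could remove \emph{all} the edges. The threshold needs a factor of roughly $1/(KM)$. But even after fixing this, $|A'|$ is only sandwiched between about $\delta|A_\alpha|$ and $KM|A_\alpha|$, and the fraction of tuples realized by walks is at best (walk count)$/(|A_\alpha|\cdot|A'|^{2k})$, which is a bounded-below constant rather than $1 - \eps$ unless the multiplicities $r_{A_\alpha,A_\beta}(s)$ for $s \in A'$ are essentially uniform. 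Nothing in the hypotheses forces that uniformity.

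The paper avoids all of this by putting the quotients $D$ into a vertex class of the regularity lemma. Because $A'$ is then a cell of $D$ rather than a set of labels, the walk-existence claim becomes a question about edges between the $D$-cell and $A$-cells in the bipartite graphs $E_{01}, E_{02}$, which is exactly what $\eps$-regularity controls; the induction on $j$ then only ever needs the statement ``a uniformly random $d \in V_{0,\alpha_0}$ is connected to a positive fraction of the good $a_{j-1}$'s,'' one regularity estimate per step, with independence of the $d_i$'s doing the rest. That reformulation, not an extra second-moment computation, is the missing idea here. To salvage your route you would need to first pass to a level set of the representation function so that the multiplicities in $A'$ are comparable, and then prove a translation-invariant second-moment bound that regularity alone does not supply; that is doable but amounts to a substantially different (and longer) argument.
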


\begin{proof}  We fix $k_0, K$ and allow all implied constants to depend on these quantities.

From the Cauchy-Schwarz inequality we have
$$ |\{ (a_1,a_2,a_3,a_4) \in A^{\otimes 4}: a_1 a_2^{-1} = a_3 a_4^{-1} \}| \geq \frac{|A|^2}{|A \cdot A^{-1}|} \gg |A|^3.$$
We thus conclude the existence of a set $D \subset A \cdot A^{-1}$ of ``popular quotients'' with the property that $|D| \gg |A|$, and such that every $d \in D$ has at least $\gg |A|$ representations of the form $d = a_1 a_2^{-1}$ with $a_1, a_2 \in A$.

Fix $D$.  We set $V_0 := D$, $V_1 := A$, $V_2 := A$, and define the bipartite graphs $E_{01} \subset V_0 \times V_1$, $E_{02} \subset V_0 \times V_2$ by declaring $(d,a_1) \in V_0 \times V_1$ and $(d,a_2) \in V_0 \times V_2$ whenever $d \in D, a_1, a_2 \in A$ are such that $d = a_1 a_2^{-1}$.  From the preceding discussion we see that $E_{01}, E_{02}$ have edge density $\gg 1$.  In fact, every $d \in D$ is connected via $E_{01}$ to $\gg |A|$ vertices in $V_1$, and connected via $E_{02}$ to $\gg |A|$ vertices in $V_2$.

Let $\eps_1 > 0$ be a small quantity (depending on $k_0,K$) to be chosen later, and let $\eps_2 > 0$ be an even smaller quantity (depending on $k_0,K,\eps_1$) to be chosen later, and so forth up to $\eps_{2k_0+1}$, which is a very small quantity depending on all previous quantities.  Applying Lemma \ref{reg-lem}, we may find partitions $V_i = V_{i,1} \cup \ldots \cup V_{i,M_i}$ for $i=0,1,2$ with $M_i = O_{\eps_2}(1)$ such that $V_{i,\alpha} \sim |A| / M_i$ for all $1 \leq \alpha \leq M_i$, and such that for each $i=1,2$, that the restriction of $E_{0i}$ to $V_{0,\alpha} \times V_{i,\beta_i}$ is $\eps_{2k_0+1}$-regular for $1-O(\eps_2)$ of all $1 \leq \alpha \leq M_0$, $1 \leq \beta_i \leq M_i$.

Applying Markov's inequality, we may find $1 \leq \alpha_0 \leq M_0$ such that for each $i=1,2$, the restriction of $E_{0i}$ to $V_{0,\alpha} \times V_{i,\beta_i}$ is $\eps_{2k_0+1}$-regular for $1-O(\eps_{2k_0+1})$ of all $1 \leq \beta_i \leq M_i$.  The set $V_{0,\alpha}$ will ultimately be our choice for the set $A'$.

Fix $\alpha_0$ as above.  If $i=1,2$, $1 \leq j \leq 2k_0$, and $1 \leq \beta_i \leq M_i$, let us call $\beta_i$ \emph{$(i,j)$-good} if the restriction of $E_{0i}$ to $V_{0,\alpha} \times V_{i,\beta_i}$ is $\eps_{2k_0+1}$-regular and has edge density at least $\eps_j$, and \emph{$(i,j)$-bad} otherwise.  Observe that the total number of edges in $E_{0i}$ between $V_{0,\alpha_0}$ and $(i,j)$-bad cells $V_{i,\beta_i}$ is $\ll \eps_j |V_{0,\alpha_0}| |A|$.  Thus (by Markov's inequality), if we pick $d \in V_{0,\alpha_0}$ uniformly at random, then with probability $1 - O(\eps_j^{1/2})$, there are at most $O(\eps_j^{1/2} |A|)$ edges between $d$ and $(i,j)$-bad cells. 

On the other hand, the total number of edges in $E_{0i}$ between $V_{0,\alpha_0}$ and $V_i$ is $\gg |V_{0,\alpha_0}| |A|$.  Thus we see that there are $\gg M_i$ cells that are $(i,j)$-good for each $1 \leq j \leq 2k_0$.

Let us now fix $1 \leq k \leq k_0$, and pick $d_1,\ldots,d_{2k} \in V_{0,\alpha_0}$ uniformly and independently at random.  By the above discussion and the union bound, we see that with probability $1 - O(\eps_1^{1/2})$, we have that there are at most $O(\eps_j^{1/2} |A|)$ number of edges between $d_l$ and $(i,j)$-bad cells for any $1 \leq j,l \leq 2k$ and $i=1,2$.  Let us now condition on this event, which we will call $E$.

For each $a_1 \in A$, and define $a_2,\ldots,a_{2k+1} \in G$ recursively by solving the equations
\begin{align*}
d_1 &= a_1 a_2^{-1} \\
d_2 &= a_3 a_2^{-1} \\
d_3 &= a_3 a_4^{-1} \\
d_4 &= a_5 a_4^{-1} \\
&\vdots\\
d_{2k} &= a_{2k+1} a_{2k}^{-1}.
\end{align*}
Observe that for fixed $d_1,\ldots,d_{2k}$, the $a_2,\ldots,a_{2k+1}$ are determined uniquely by $a_1$, and the maps $a_1 \to a_j$ are injective for $j=2,\ldots,2k+1$.  Furthermore, each $a_j$ depends only on $a_1$ and $d_1,\ldots,d_{j-1}$.

For each $1 \leq j \leq 2k+1$, we let $f_j(d_1,\ldots,d_{j-1})$ denote the number of $a_1$ such that $a_1,\ldots,a_j$ all lie in $A$.  We now claim that for each $1 \leq j \leq 2k+1$, that we have
\begin{equation}\label{fidge}
 f_j(d_1,\ldots,d_{j-1}) \gg_{\eps_1,\ldots,\eps_{j-1}} |A|
 \end{equation}
with probability $1 - O(\eps_1^{1/2})$.

We prove this by induction on $j$.  When $j=1$ we have $f_1() = |A|$ and the claim is clear, so suppose now that $2 \leq j \leq 2k+1$ and that the claim has already been proven for $j-1$.  To fix the notation we shall assume that $j$ is even; the arguments for odd $j$ are virtually identical and are left to the reader.

By induction hypothesis, we may already condition $d_1,\ldots,d_{j-2}$ so that
$$ f_{j-1}(d_1,\ldots,d_{j-2}) \gg_{\eps_1,\ldots,\eps_{j-2}} |A|.$$
Thus there exist $\gg_{\eps_1,\ldots,\eps_{j-2}} |A|$ choices of $a_1$ (and hence $a_2,\ldots,a_{j-1}$) such that $a_1,\ldots,a_{j-1}$ all lie in $A$.

On the other hand, we know (because of our conditioning to $E$) that $d_{j-2}$ is connected to at most $O(\eps_{j-1}^{1/2} |A|)$ vertices in $(2,j-1)$-bad cells.  Thus, by deleting those vertices from consideration, we obtain $\gg_{\eps_1,\ldots,\eps_{j-2}} |A|$ choices for $a_1,\ldots,a_{j-1} \in A$ such that $a_{j-1}$ lies in a $(2,j-1)$-good cell.

On the other hand (again by our conditioning to $E$), the random variable $d_{j-1}$ is uniformly distributed on a subset of $|V_{0,\alpha_0}|$ of density $\gg 1$, even after conditioning on $d_1,\ldots,d_{j-2}$.  Using the regularity and density of the $(2,j-1)$-cells, we conclude that with probability $1-O(\eps_1^{1/2})$, that $\gg \eps_{j-1}$ of the $a_{j-1}$ listed above are connected via $E_{02}$ to $d_{j-1}$.  By definition of $E_{02}$ and $a_j$ (and the hypothesis that $j$ is even), this implies that $a_j \in A$.  Thus we have \eqref{fidge} with probability $1-O(\eps_1^{1/2})$, closing the induction.

Applying \eqref{fidge} with $j=2k+1$, we conclude in particular that for $1-O(\eps_1^{1/2})$ of all tuples $(d_1,\ldots,d_{2k}) \in V_{0,\alpha_0}^{\otimes 2k}$, that there exists at least one choice of $a_1$ (and hence $a_2,\ldots,a_{2k+1}$) such that $a_1,\ldots,a_{2k+1} \in A$.  Applying the telescoping identity
$$ d_1 d_2^{-1} d_3 d_4^{-1} \ldots d_{2k}^{-1} = a_1 a_{2k+1}^{-1}$$
we conclude that
$$ d_1 d_2^{-1} d_3 d_4^{-1} \ldots d_{2k}^{-1} \in A \cdot A^{-1}.$$
Setting $A' := V_{0,\alpha_0}$ (and choosing $\eps_1$ sufficiently small depending on $\eps$), we obtain the claim.
\end{proof}

We can strengthen the above result slightly by ensuring $A'$ is centred, at the (necessary) cost of now placing $A'$ in $A \cdot A^{-1}$ rather than $A$:

\begin{proposition}[Balog-Szemer\'edi type lemma, again]\label{bz-prop2}  Let $A$ be a finite non-empty subset of a multiplicative group $G = (G,\cdot)$ such that $|A \cdot A^{-1}| \leq K |A|$ for some $K > 0$, let $k_0 \geq 1$ be an integer, and let $\eps > 0$.  Then there exists a centred set $D \subset A \cdot A^{-1}$ with $|D| \gg_{K,k_0,\eps} |A|$ such that for every $1 \leq k \leq k_0$, there are at least $(1-\eps) |D|^{k}$ tuples $(d_1,\ldots,d_k) \in D^{\otimes k}$ such that
$$ d_1 \ldots d_k \in A \cdot A^{-1}.$$
\end{proposition}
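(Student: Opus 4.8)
The plan is to bootstrap from Proposition \ref{bz-prop}. That proposition already produces a large set $A' \subseteq A \cdot A^{-1}$ for which most tuples $(a_1,\ldots,a_{2k}) \in (A')^{\otimes 2k}$ have alternating product $a_1 a_2^{-1} \cdots a_{2k-1} a_{2k}^{-1}$ lying in $A \cdot A^{-1}$, but $A'$ itself need not be centred. The key observation is that if one takes $D$ to be (a suitably large subset of) the quotient set $\{a_1 a_2^{-1} : a_1,a_2 \in A'\} \cap (A\cdot A^{-1})$, then a product $d_1 \cdots d_k$ of elements of $D$, upon choosing representations $d_i = a_{2i-1} a_{2i}^{-1}$, becomes exactly an alternating product of elements of $A'$, to which Proposition \ref{bz-prop} applies; and such a quotient set is automatically symmetric and contains $1$.

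In detail, I would first fix $\tau := 1/2$, choose $\eps' > 0$ sufficiently small depending on $K, k_0, \eps$ (the precise smallness to be dictated by the estimate at the end), and apply Proposition \ref{bz-prop} with parameters $k_0$ and $\eps'$ to obtain $A' \subseteq A \cdot A^{-1}$ with $|A'| \gg_{K,k_0,\eps} |A|$ (and, since $A' \subseteq A\cdot A^{-1}$, also $|A'| \le K|A|$, so $|A'| \sim_K |A|$) such that for each $1 \le k \le k_0$ at most $\eps' |A'|^{2k}$ tuples in $(A')^{\otimes 2k}$ fail to have alternating product in $A \cdot A^{-1}$. Next I would introduce $R := \{(a,b) \in A' \times A' : ab^{-1} \in A \cdot A^{-1}\}$, the quotient set $Q := \{ab^{-1} : (a,b) \in R\} \subseteq A \cdot A^{-1}$, and the representation count $\rho(d) := |\{(a,b) \in R : ab^{-1} = d\}|$, so that $\sum_{d \in Q} \rho(d) = |R|$. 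Since $A \cdot A^{-1}$ is symmetric, $R$ is invariant under $(a,b) \mapsto (b,a)$; hence $Q$ is symmetric, $\rho$ is symmetric, and $\rho(1) = |A'|$ is the maximal value of $\rho$ (as $b$ determines $a$ once $d = ab^{-1}$ is fixed). The $k=1$ case of Proposition \ref{bz-prop} gives $|R| \ge (1-\eps')|A'|^2 \ge \tfrac12 |A'|^2$. I would then set $D := Q^* := \{d \in Q : \rho(d) \ge \tfrac12 |R|/|Q|\}$. Standard counting gives $\sum_{d \in Q^*} \rho(d) \ge \tfrac12 |R|$, so (using $\rho \le |A'|$ and $|Q| \le |A \cdot A^{-1}| \le K|A|$) that $|D| \gg_K |A|$ and that $\rho(d) \sim_K |A|$ for every $d \in D$; since $\rho(1)$ is maximal it exceeds the average $|R|/|Q|$, so $1 \in D$; and as $\rho$ is symmetric, $D$ is symmetric. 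Thus $D$ is a centred subset of $A \cdot A^{-1}$, and $R, Q, \rho, D$ do not depend on $k$, so the same $D$ serves all $1 \le k \le k_0$.

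Finally, fix $1 \le k \le k_0$; I would bound the number of \emph{bad} tuples $(d_1,\ldots,d_k) \in D^{\otimes k}$ with $d_1 \cdots d_k \notin A \cdot A^{-1}$. Consider the map $\psi$ sending $((a_1,b_1),\ldots,(a_k,b_k))$, with each $(a_i,b_i) \in R$ and each $a_i b_i^{-1} \in D$, to $(a_1 b_1^{-1},\ldots,a_k b_k^{-1}) \in D^{\otimes k}$, where the source is viewed inside $(A')^{\otimes 2k}$ via $(a_1,b_1,\ldots,a_k,b_k)$. If a source tuple lies over a bad $D$-tuple, its alternating product $a_1 b_1^{-1} \cdots a_k b_k^{-1} = d_1 \cdots d_k$ is not in $A \cdot A^{-1}$, so it is one of the at most $\eps' |A'|^{2k}$ bad tuples of $(A')^{\otimes 2k}$; conversely the $\psi$-fibre over $(d_1,\ldots,d_k)$ has size $\prod_i \rho(d_i) \ge (\tfrac12 |R|/|Q|)^k \gg_K |A|^k$ since each $d_i \in D$. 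Combining, the number of bad $D$-tuples is at most $\eps' |A'|^{2k} / (\tfrac12|R|/|Q|)^k \ll_{K,k} \eps' |A|^k \ll_{K,k} \eps' |D|^k$, which is at most $\eps |D|^k$ once $\eps'$ was chosen small enough depending on $K, k_0, \eps$ (take the minimum of the finitely many constraints over $1 \le k \le k_0$). This yields all assertions of the proposition.

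The main obstacle is precisely the non-uniformity of representations: an element $d \in Q$ may admit wildly varying numbers $\rho(d)$ of representations $d = ab^{-1}$, so one cannot directly transfer the ``most tuples are good'' statement from $(A')^{\otimes 2k}$ to $D^{\otimes k}$ by counting. The remedy — restricting to the popular quotients $Q^*$ — is exactly what simultaneously keeps $|D|$ large, keeps $D$ centred (since the trivial element has the maximal multiplicity), and makes the fibres of $\psi$ comparable in size. Everything else is routine bookkeeping with the bound $|A \cdot A^{-1}| \le K|A|$ and an up-front choice of $\eps'$.
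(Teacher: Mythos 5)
Your approach is in the same spirit as the paper's: both build $D$ out of quotients $ab^{-1}$ of elements of the set $A'$ supplied by Proposition \ref{bz-prop}. But the quantitative step where you pass from a $\rho$-weighted count of bad tuples to an unweighted count contains a circularity that the paper's probabilistic construction is specifically designed to avoid. Your bound on the number of bad $D$-tuples divides $\eps'|A'|^{2k}$ by $\bigl(\tfrac12|R|/|Q|\bigr)^k$; since $|R|\geq\tfrac12|A'|^2$ and $|Q|\leq K|A|$, this correctly gives at most $\eps'(4K)^k|A|^k$ bad tuples, with a constant depending only on $K,k$. The trouble is the next comparison to $|D|^k$: you only know $|D|\geq\tfrac14|A'|\geq\tfrac14\,c_{K,k_0,\eps'}|A|$, where $c_{K,k_0,\eps'}$ is the implied constant in Proposition \ref{bz-prop}, so the resulting bad fraction is at most $\eps'\bigl(16K/c_{K,k_0,\eps'}\bigr)^{k_0}$, not $O_{K,k_0}(\eps')$. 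Your displays ``$\gg_K|A|^k$'', ``$\rho(d)\sim_K|A|$'', and the final ``$\ll_{K,k}\eps'|D|^k$'' all silently absorb the dependence on $\eps'$ through $c_{K,k_0,\eps'}$. To close the argument you would need to exhibit $\eps'>0$ with $\eps'\bigl(16K/c_{K,k_0,\eps'}\bigr)^{k_0}\leq\eps$, but Proposition \ref{bz-prop} gives no control on how $c_{K,k_0,\eps'}$ decays as $\eps'\to0$; since its proof uses the regularity lemma the decay is of tower type, and then this self-referential inequality has no solution, so the argument does not close.

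The paper sidesteps this entirely. It draws $2|A'|$ i.i.d.\ uniform elements of $A'$ and takes $D$ to be the resulting random list of consecutive quotients $a_{2j}a_{2j+1}^{-1}$ and their inverses. A second-moment computation shows $|D|\sim|A'|$ with \emph{absolute} implied constants, and by construction every $d\in D$ has multiplicity $\mu(d)\geq1$, so the weighted count $\sum\mu(d_1)\cdots\mu(d_k)$ of bad tuples already dominates the unweighted count without dividing by any fibre size. This gives a bad fraction $\ll_{k_0}\eps'$ with a constant depending only on $k_0$, so $\eps'$ can be chosen at the end with no circularity. Your ``popular quotients'' set may well have the desired property, but your proof of the bad-tuple bound needs a different mechanism; as written it relies on comparability of the multiplicities $\rho(d)$ over $d\in D$, and these in fact vary by a factor of order $K/c_{K,k_0,\eps'}$, which is not bounded in terms of $K,k_0$ alone.
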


\begin{proof}  We may assume that $|A|$ is large depending on $K, k_0, \eps$, since otherwise we may just take $D = \{1\}$.
Observe that it will suffice to construct a symmetric set $D$ rather than a centred set $D$ with the desired properties, since we can convert a symmetric set into a centred one simply by adding $\{1\}$, and the properties of $D$ do not change significantly (adjusting $\eps$ if necessary).

Let $\eps' > 0$ be a small number depending on $k_0, \eps$ to be chosen later.  Again, we may assume $|A|$ large depending on $K,k_0,\eps,\eps'$.  Applying Proposition \ref{bz-prop}, we may find a subset
$A'$ of $A \cdot A^{-1}$ with $|A'| \gg_{K,k_0,\eps'} |A|$ such that for every $1 \leq k \leq k_0$, there are at least $(1-\eps') |A'|^{2k}$ tuples 
$(a_1,\ldots,a_{2k}) \in (A')^{\otimes 2k}$ such that
\begin{equation}\label{asot}
 a_1 a_2^{-1} a_3 a_4^{-1} \ldots a_{2k-1} a_{2k}^{-1} \in A \cdot A^{-1}.
 \end{equation}

We now use the probabilistic method.  Let $a_1,\ldots,a_{2|A'|}$ be elements of $A'$ drawn uniformly and independently at random, and let $d_1,\ldots,d_{2|A'|} \in A \cdot A^{-1}$ be the quantities
$$ d_j := a_{2j} a_{2j+1}^{-1}; \quad d_{j+|A'|} := a_{2j+1} a_{2j}^{-1}$$
and let $D := \{d_1,\ldots,d_{2|A'|}\}$.  Then $D$ is clearly a symmetric subset of $A \cdot A^{-1}$ with $|D| \leq 2|A'|$.

For each $d \in D$, let $\mu(d)$ be the number of representations $d = d_j$ of $d$, where $1 \leq j \leq 2|A'|$, thus
\begin{equation}\label{doo}
\sum_{d \in D} \mu(d) = 2|A'|.
\end{equation}
Observe that if $1 \leq j, j' \leq 2|A'|$ are such that $|j-j'| \neq 0, |A'|$, then $d_j, d_{j'}$ are independent, and the probability that $d_j=d_{j'}$ is at most $1/|A'|$.  Summing over all $j,j'$ (treating the exceptional cases $|j-j'| = 0,|A'|$ separately) and rearranging, one obtains that
$$ \E \sum_{d \in D} \mu(d)^2 \ll |A'|.$$
Thus by Markov's inequality, with probability at least $0.9$, we have
\begin{equation}\label{mark-1}
\sum_{d \in D} \mu(d)^2 \ll |A'|.
\end{equation}
which by Cauchy-Schwarz and \eqref{doo} implies that $|D| \sim |A'|$.

Next, observe that if $1 \leq k \leq k_0$ and $1 \leq j_1,\ldots,j_k \leq 2|A'|$ are such that no two of the $j_i, j_{i'}$ are equal or differ by $|A'|$, then $d_{j_1},\ldots,d_{j_k}$ are independent, and by \eqref{asot} we see that
$$ d_{j_1} \ldots d_{j_k} \in A \cdot A^{-1}$$
with probability $1-O(\eps')$.  Summing over all choices of $k$ and $j_1,\ldots,j_k$ (again treating the exceptional cases separately), we see that
$$ \E \sum_{k=1}^{k_0} \frac{1}{|A'|^k} |\{ 1 \leq j_1,\ldots,j_k \leq 2|A'|: d_{j_1} \ldots d_{j_k} \not \in A \cdot A^{-1} \} \ll_{k_0} \eps'.$$
Thus by Markov's inequality, with probability at least $0.9$, we have
$$ \sum_{k=1}^{k_0} \frac{1}{|A'|^k} |\{ 1 \leq j_1,\ldots,j_k \leq 2|A'|: d_{j_1} \ldots d_{j_k} \not \in A \cdot A^{-1} \} \ll_{k_0} \eps'$$
and thus
\begin{equation}\label{mark-2}
 |\{ 1 \leq j_1,\ldots,j_k \leq 2|A'|: d_{j_1} \ldots d_{j_k} \not \in A \cdot A^{-1} \}| \ll_{k_0} \eps' |A'|^k
 \end{equation}
for all $1 \leq k \leq k_0$.  Thus with probability at least $0.8$, \eqref{mark-1} and \eqref{mark-2} both hold.  We now select $a_1,\ldots,a_{2|A'|}$ so that this is the case.

We rearrange \eqref{mark-2} as
$$ \sum_{d_1,\ldots,d_k \in D: d_1 \ldots d_k \not \in A \cdot A^{-1}} \mu(d_1) \ldots \mu(d_k) \ll_{k_0} \eps' |A'|^k \ll_{k_0} \eps' |D|^k.$$
In particular this implies that
$$ \{ (d_1,\ldots,d_k) \in D^{\otimes k}: d_1 \ldots d_k \not \in A \cdot A^{-1} \} \ll_{k_0} \eps' |D|^k$$
and the claim follows by choosing $\eps'$ sufficiently small depending on $k_0, \eps$.
\end{proof}

\begin{remark}  In the case when $G = (G,+)$ is abelian, we may invoke Theorem \ref{gr2} and obtain a stronger result of a similar flavour, namely that $\pm 4A$ contains a coset progression $H+P$ of size comparable to $A$, and thus also contains a set of the form $kA'-kA'$ for some $A'$ of size comparable to $A$, for any fixed $k$.  It is thus reasonable to conjecture an analogous statement in the non-commutative case; for instance, if $A$ is a $K$-approximate group, and $k \geq 1$, one would expect the existence of a set $A'$ of size $\gg_{K,k} |A|$ such that $(A')^k \subset A^{100}$ (say).  Unfortunately our methods do not seem to establish such a result.
\end{remark}

\begin{remark} Because of our reliance on the Szemer\'edi regularity lemma, the implicit bounds in the above results are extremely poor (of tower-exponential type).  In view of the polynomial strengthening of the Balog-Szemer\'edi theorem \cite{balog} due to Gowers\cite{gowers-4}, it is natural to expect these bounds to be improvable here also.  Note however that the best bounds for the Green-Ruzsa results are still exponential in $K$, and so we do not know how to obtain a polynomial bound in the above results even in the abelian case.\footnote{Note added in proof: in \cite{croot}, \cite{sanders-2} a significantly strengthened version of Proposition \ref{bz-prop} was established, in which the $\eps$ error was eliminated, and the tower-exponential bounds were improved to exponential bounds by avoiding an appeal to the regularity lemma.  However, the methods in these papers appear to still only give exponential bounds even after assuming a polynomial version of Freiman's theorem, so further techniques would still be needed to reach the polynomial case.}
\end{remark}

\end{document}